\documentclass[microtype]{gtpart}
\usepackage{tocloft}
\usepackage{mathtools}
\usepackage[all,cmtip]{xy}

\usepackage{hyperref}
\hypersetup{
colorlinks = true,
linkcolor = {blue},
urlcolor = {red},
citecolor = {blue}
}
\usepackage[scr=rsfs]{mathalpha}
\usepackage[utf8]{inputenc}   
\usepackage[T1]{fontenc}      
\usepackage[english]{babel} 

\newcommand{\Rect}{\mathbf{R}}

\newcommand{\Mass}{\mathbb{M}}
\newcommand{\N}{\mathbb{N}}

\newcommand{\Lip}{\mathrm{Lip}}

\newcommand{\dist}{\mathrm{dist}}

\newcommand{\spt}{\mathrm{spt}\hspace{0.01cm}}

\newcommand{\dV}{d_V\kern-1pt}
\newcommand{\dW}{d_W\kern-1pt}

\newcommand\res{\mathop{\hbox{\vrule height 7pt width .3pt depth 0pt
\vrule height .3pt width 5pt depth 0pt}}\nolimits}

\newcommand{\modtwo}{_{_{\mathbb{Z}_2}}\hspace{-0.1cm}}


\title{On smooth approximation of integral cycles mod 2}

%

\author{Gianmarco Caldini}
\givenname{}
\surname{}
\address{}
\email{}
\urladdr{}

%
%
%
%

%
%
%
%
\newtheorem{thm}{Theorem}[section]    
\newtheorem{corollary}[thm]{Corollary}
\newtheorem{pro}[thm]{Proposition}    
\newtheorem{lem}[thm]{Lemma}          
%
\theoremstyle{definition}
\newtheorem{definition}[thm]{Definition}    

\theoremstyle{definition}

\theoremstyle{remark}
\newtheorem{remark}[thm]{Remark}             
 
\newtheorem{lemma}[thm]{Lemma} 
\theoremstyle{definition}
\newtheorem{assumptions}[thm]{Assumptions}    

%
%
\usepackage{stmaryrd}
\usepackage{titlesec}
\titlespacing*{\subsubsection}{0pt}{0.5cm}{0.15cm}


\begin{document}

\begin{abstract}
We prove that every mod 2 integral cycle $T$ in a Riemannian manifold $\mathcal{M}$ can be approximated in flat norm by a cycle which is a smooth submanifold $\Sigma$ of nearly the same area, up to a singular set of codimension 3; in addition, this estimate on the singular set can be refined depending on the codimension of the cycle. Moreover, if the mod 2 homology class $\tau$ admits a smooth embedded representative, then $\Sigma$ can be chosen free of singularities. This article provides the unoriented version of the smooth approximation theorem for integral cycles in \cite{ABCD}.
\end{abstract}

\maketitle
\vspace{-1cm}
\setlength{\cftbeforesecskip}{9pt}
\tableofcontents

\setlength{\parskip}{0cm}
\setlength{\parindent}{0.5cm}

\section{Introduction}
\subsection{Main theorem and corollaries}

In this article we are interested in the question of whether it is possible to approximate any integral current mod 2 belonging to a $\mathbb{Z}_2$ homology class of a smooth Riemannian manifold with currents induced by smooth submanifolds; in particular, we continue the study started in \cite{ABCD}, providing the analog $-$ without imposing any orientability assumption and for integral cycles mod 2 $-$ of the smooth approximation theorems for integral cycles in oriented domains developed therein. More formally, our main result is the following.

\begin{thm}\label{t:1}
Let $\mathcal{M}$ be a connected smooth closed (not necessarily orientable) Riemannian manifold of dimension $m+n$. Let $\varepsilon > 0$, $\tau$ an $m$-dimensional homology class in $H_{m}(\mathcal{M}, \mathbb{Z}_2)$, and $T$ an integral cycle  mod 2 representing $\tau$. Then there is a smooth triangulation $\mathcal{K}$ of $\mathcal{M}$ and an $m$-dimensional smooth submanifold $\Sigma$ of $\mathcal{M}\setminus \mathcal{K}^{m-n-1}$ (where $\mathcal{K}^{m-n-1}$ denotes $(m-n-1)$-skeleton of $\mathcal{K}$) with the following properties.
\begin{enumerate}
\item The $m$-dimensional volume of $\Sigma$ does not exceed the mass of $T$ by more than $\varepsilon$, that is $\mathcal{H}^{m}(\Sigma) \leq \mathbb{M}(T) + \varepsilon$.
\item The integral mod 2 cycle $\llbracket \Sigma \rrbracket\modtwo$ is homologous to $T$ and there is an $m+1$-dimensional integral mod 2 current $S$ in $\mathcal{M}$ such that $\partial S= \llbracket \Sigma \rrbracket\modtwo  - T$ and $\mathbb{M}(S)< \varepsilon$.
\item If $\tau$ admits a smooth embedded representative, then $\Sigma$ can be chosen to be a smooth submanifold of $\mathcal{M}$.
\end{enumerate}
\end{thm}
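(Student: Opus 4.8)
The plan is to combine a sharp mod $2$ version of the Federer--Fleming deformation theorem with an inductive desingularization of the resulting polyhedral cycle, carried out one skeleton at a time inside thin tubular neighborhoods so that the area changes by an arbitrarily small amount at each stage. \emph{Step 1: reduction to a polyhedral cycle.} I would fix a smooth triangulation $\mathcal{K}$ of $\mathcal{M}$, subdivide it to a small mesh, and apply the mod $2$ deformation theorem to push $T$ onto the $m$-skeleton $\mathcal{K}^m$, producing a mod $2$ polyhedral cycle $P$ with $P-T=\partial R$ and $\mathbb{M}(R)$ small. The delicate point is the \emph{sharp} mass bound $\mathbb{M}(P)\le\mathbb{M}(T)+\varepsilon/4$ rather than $\mathbb{M}(P)\le C(m+n)\,\mathbb{M}(T)$: this follows by taking the mesh of $\mathcal{K}$ much finer than the scale at which the density ratios of $\|T\|$ are close to $1$ — which, by a Vitali covering argument, one may assume on a set carrying all but an arbitrarily small fraction of the mass — and comparing the deformed current there to a nearly flat plane; I would invoke the refined deformation statement available from the machinery of \cite{ABCD}. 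On $\mathcal{M}\setminus\mathcal{K}^{m-1}$ the support of $P$ is a disjoint union of open $m$-simplices, hence a smooth $m$-submanifold, and $P$ is \emph{conical}: in a tubular neighborhood of the interior of a cell of dimension $m-k$, $\mathrm{spt}\,P$ is the product of that cell with a cone over a link $L\subset S^{n+k-1}$.

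\emph{Step 2: desingularizing down the skeleton.} Inductively I would construct mod $2$ cycles $T_1=P,T_2,\dots$, each homologous to $T$ by a filling of small mass, with $\mathbb{M}(T_k)\le\mathbb{M}(T)+(1-2^{-k-1})\varepsilon$, such that $\mathrm{spt}\,T_k\setminus\mathcal{K}^{m-k}$ is a smooth embedded $m$-submanifold and $T_k$ is conical near $\mathcal{K}^{m-k}$ with \emph{smooth} links. To go from $k$ to $k+1$, near an $(m-k)$-cell $\sigma$ the support is $\sigma$ times a cone over a closed smooth $(k-1)$-submanifold $L_\sigma\subset S^{n+k-1}$, a cycle since $\partial T_k=0$ mod $2$. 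I replace the truncated cone $\{r\omega:0\le r\le\rho,\ \omega\in L_\sigma\}$ by $\rho\,W_\sigma$, where $W_\sigma\subset\overline{B}^{n+k}_1$ is a compact smooth embedded $k$-manifold with $\partial W_\sigma=L_\sigma$ that coincides with the radial cone in a collar of the boundary sphere; multiplying by $\sigma$ and gluing yields a cycle $T_{k+1}$ which is smooth across $\sigma$ and still conical with smooth links near $\mathcal{K}^{m-k-1}$. Since the new piece has area $\rho^k\mathcal{H}^k(W_\sigma)$ while the excised cone has area $\rho^k\mathcal{H}^{k-1}(L_\sigma)/k$, and there are only finitely many cells, choosing $\rho$ small keeps the total area increase below $2^{-k-2}\varepsilon$; the region swept between the old and new pieces is a small-mass filling witnessing $T_{k+1}\sim T_k$. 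Summing $R$ with these fillings over the finitely many steps produces $S$ with $\partial S=\llbracket \Sigma \rrbracket\modtwo-T$ and $\mathbb{M}(S)<\varepsilon$, where $\Sigma$ is the smooth submanifold underlying the final cycle.

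\emph{Step 3: the obstruction and item (3).} The induction proceeds at an $(m-k)$-cell $\sigma$ precisely when $L_\sigma$ bounds a smooth compact embedded submanifold of the normal ball $\overline{B}^{n+k}_1$ — equivalently, when the Pontryagin--Thom class of $L_\sigma\subset S^{n+k-1}$ in $\pi_{n+k-1}(MO(n))$ vanishes. Exploiting the freedom to modify the links by cobordisms inside the spheres and to cancel them in pairs along the skeleton — legitimate because $\partial T_k=0$ mod $2$ turns the link data into a cocycle — reduces matters to the vanishing of a cohomology class of $\mathcal{M}$ assembled from these groups. Since $\pi_{n+j}(MO(n))$ vanishes for $j$ in a range that widens with $n$, the first surviving obstruction for mod $2$ cycles coming from $\mathfrak{N}_2=\mathbb{Z}_2$, generated by $\mathbb{RP}^2$, one can desingularize down to cells of dimension $m-n-1$; this is where the induction stops, giving the bound $\Sigma\subset\mathcal{M}\setminus\mathcal{K}^{m-n-1}$ together with its refinement in the codimension $n$, the uniform value being codimension $3$. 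For item (3): the obstruction encountered at the final stage depends only on the class $\tau$ — it is exactly the obstruction to $\tau$ admitting a smooth embedded representative — so if such a representative exists it vanishes identically, the desingularization runs to the end, and $\Sigma$ is a smooth submanifold of all of $\mathcal{M}$.

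\emph{Main obstacle.} The crux is Step 2: simultaneously controlling the area increment of every surgery — which forces the thin-tube scaling and the precise comparison with the cone above — and making the surgeries at cells of equal dimension, and at incident cells of different dimension, mutually compatible, so that the conical-with-smooth-links structure is genuinely transported down the poset of faces. Securing the sharp (rather than merely Lipschitz) mass bound in Step 1, and pinning down the exact dependence on $n$ of the residual obstruction in Step 3, are the two other substantial ingredients.
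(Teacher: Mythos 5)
Your proposal takes a genuinely different route from the paper. Where the paper works globally, using the relative Pontryagin--Thom construction to represent the Poincar\'e dual of $\tau$ as a map from a complex $Q$ (the complement of a small neighborhood of $\mathcal{K}^{m-n-1}$) to $T(\gamma^n)$ and then gluing this to the smooth part of a polyhedral approximation via a homotopy, you work locally: deform $T$ to a conical polyhedral cycle and desingularize inductively, cell by cell, by capping off the link in each normal sphere with an embedded filling. Steps 1 and 2 are in the spirit of, and could plausibly be made to work with, the machinery the paper's Propositions \ref{p:poly_approx_prescribedsing} and \ref{p:squash} supply; the ``main obstacle'' you flag (mutual compatibility of the fillings down the face poset) is a real difficulty that the paper's map-level argument sidesteps cleanly. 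But the central topological step, Step 3, contains a genuine error which, uncorrected, would not yield codimension $n+1$.

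You assert that ``$\pi_{n+j}(MO(n))$ vanishes for $j$ in a range that widens with $n$,'' and draw from this that the induction reaches cells of dimension $m-n-1$. This is not so. By the Freudenthal/stable range and Thom's computation, $\pi_{n+j}(T(\gamma^n))\cong\mathfrak{N}_j$ for $j<n$, and already $\pi_{n+2}(T(\gamma^n))\cong\mathfrak{N}_2\cong\mathbb{Z}_2\ne 0$. Thus the local obstruction to filling the link at a codimension-$3$ cell lives in a \emph{nonzero} group, for every $n$. Your argument, as stated, would therefore halt at codimension $3$ independently of $n$, which is exactly the result one would get by blindly transposing the integral argument of \cite{ABCD}, and not the sharper codimension $n+1$ the theorem asserts. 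The actual reason the induction can proceed to codimension $n+1$ is \emph{not} the vanishing of homotopy groups. It is Thom's Th\'eor\`eme II.10 (Theorem \ref{t:productEM} here): $T(\gamma^n)$ is $2n$-equivalent to a \emph{product} $Y$ of Eilenberg--MacLane spaces whose first factor is $K(\mathbb{Z}_2,n)$. The product structure means the Postnikov $k$-invariants of $T(\gamma^n)$ through dimension $2n$ are trivial, which yields a section $g\colon K(\mathbb{Z}_2,n)^{2n}\to T(\gamma^n)$ with $g^*u=\iota$ (Corollary \ref{c:restriction}); so every $n$-dimensional $\mathbb{Z}_2$-cohomology class on a $2n$-complex lifts to $T(\gamma^n)$. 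In your obstruction-theoretic language: the individual obstruction cocycles in $H^{n+j+1}(\,\cdot\,;\pi_{n+j}(T(\gamma^n)))$ need not vanish identically, but the splitting $T(\gamma^n)\simeq_{2n} K(\mathbb{Z}_2,n)\times(\text{rest})$ guarantees that, after readjusting the lifts on lower skeleta, all obstructions up through dimension $2n$ can be killed --- this is the precise input missing from your argument. A similar remark applies to item (3): the paper obtains the global map $\ell\colon\mathcal{M}\to T(\gamma^n)$ directly from Thom's representability theorem; your claim that ``the obstruction encountered at the final stage depends only on $\tau$'' requires more justification, since it must also be shown independent of all the finite choices made earlier in the induction.
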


\begin{remark}
The codimension $n+1$ estimate on the singular set in Theorem \ref{t:1} is sharp in full generality, see Theorem \ref{t:sharp}; in fact, proving that the construction in Theorem \ref{t:1} for mod 2 homology is optimal is subtler than in the case of integral homology, \emph{cfr.} Section \ref{s:optimality}.
\end{remark}

\begin{remark}\label{r:St_pi} By the foundational work of Thom \cite{Thom54}, any mod 2 homology class $\tau \in H_m(\mathcal{M},\mathbb{Z}_2)$ is representable by a smooth embedded submanifold when $m=1,2,3$ or when $n=1$; likewise, this is true every time $m\leq n$, \textit{cfr.} \cite[Théorème II.26]{Thom54}. The lowest dimensional example not covered by Thom's algebraic computations is the 4-dimensional $\mathbb{Z}_2$ homology group in a 6-dimensional closed smooth manifold $\mathcal{M}$; in fact, it is a corollary of a construction due to Teichner, see \cite{Teichner95}, the existence of a 6-dimensional manifold $\mathcal{M}$ with a 4-dimensional homology class $\tau \in H_4(\mathcal{M},\mathbb{Z}_2)$ which cannot be represented by an embedded $-$ in fact, not even immersed, \emph{cfr.} \cite{Grantszucs} $-$ smooth submanifold, see Section \ref{s:optimality}.
\end{remark}

Since every mod 2 homology class of codimension $n=1$ is representable by a smooth embedded submanifold, a straightforward application of the same techniques developed in \cite{ABCD} in the integral setting would provide the following result: any integral cycle mod 2 can be approximated in the sense of Theorem \ref{t:1} with a smooth submanifold up to a singular set of codimension 3, regardless of the codimension $n\geq 2$. The key difference in the mod 2 setting is based on the better-behaved topological structure of the Thom space of the universal $n$-plane bundle over the infinite Grassmannian $BO(n)$, allowing for a refined result and improving the estimate on the singular set based on the codimension $n$ of the cycle, see Theorem \ref{t:productEM} and Remark \ref{r:productEM}.

\begin{remark}
We recall that every mod 2 homology class $\sigma \in H_m(\mathcal{M},\mathbb{Z}_2)$ admits a \emph{Steenrod representation}, \textit{i.e.} there exists a smooth manifold $\Sigma$ and a continuous map $f:\Sigma \rightarrow \mathcal{M}$ such that the fundamental class of $\Sigma$ equals $\sigma$, \emph{cfr.} \cite[Théorème III.2]{Thom54}. This differs substantially from the integral setting, where there exist integral homology classes that cannot be represented in the sense of Steenrod, \textit{cfr.} \cite[Théorème III.9]{Thom54}. For this reason, the problem of finding smooth embedded representatives in mod 2 homology classes can be understood as a problem of whether a continuous map is homotopic to an embedding.
\end{remark}

We will follow definitions and terminology of \cite{ABCD} and \cite[4.2.26]{Federerbook}, which will be briefly recalled in Section \ref{s:notationandpreliminaries}. Moreover, we will always rely on the following assumptions (unless otherwise stated).

\begin{assumptions}\label{a:1}
 $\mathcal{M}$ is a connected smooth closed (not necessarily orientable) Riemannian manifold of dimension $m+n$, where $n,m \in \N\setminus \{0\}$ are arbitrary positive integers, $\tau$ is an element of the $m$-dimensional homology group $H_{m}(\mathcal{M}, \mathbb{Z}_2)$ and $T$ is an integral mod 2 current (hence a cycle) representing $\tau$.
\end{assumptions}

In dealing with smooth triangulations of $\mathcal{M}$ we will avoid referring to the map $t:|\mathcal{K}| \rightarrow \mathcal{M}$ from the geometric realization of some fixed simplicial complex $\mathcal{K}$ and we will use directly $\mathcal{K}$ also for the smooth triangulation of $\mathcal{M}$. As usual, $\mathcal{K}^j$ will denote the $j$-dimensional skeleton of $\mathcal{K}$ and $\Sigma$ will denote either smooth $m$-dimensional closed embedded (not necessarily orientable) submanifolds of $\mathcal{M}$ or smooth $m$-dimensional embedded submanifolds of $\mathcal{M}\setminus \mathcal{K}^j$ (for some integer $j$) whose topological closure is contained in $\mathcal{K}^j$; We will denote by $\llbracket \Sigma \rrbracket\modtwo$ the integral cycle mod 2 induced by $\Sigma$. 

By Nash’s isometric embedding theorem we consider $\mathcal{M}$ as a submanifold of some Euclidean space $\mathbb{R}^N$. For every $k=0,\dots,m+n$, we denote by $\mathcal{Z}_k(\mathcal{M}, \mathbb{Z}_2)$ the set of $k$-dimensional integral mod 2 cycles with mod 2 support in $\mathcal{M}$ and by $\mathcal{Z}_{k, Lip}(\mathcal{M},\mathbb{Z}_2)$ the set of $k$-dimensional mod 2 Lipschitz cycles with mod 2 support in $\mathcal{M}$, \emph{i.e.} the set of mod 2 integral $k$-cycles of the form $f_{\#}(P)$ where $f: \mathbb{R}^N \rightarrow \mathcal{M}$ is a Lipschitz map and $P$ is a polyhedral cycle in $\mathbb{R}^N$.

\begin{definition}[\emph{Smooth representability}]\label{d:smoothrepresentability}
Let $\mathcal{M}, \tau$ and $\Sigma$ be as in Assumption \ref{a:1}. We say that $\tau$ is \emph{representable by a smooth submanifold} (or that $\tau$ \emph{admits a smooth representative}) if there exists a smooth embedding $f:\Sigma \rightarrow \mathcal{M}$ such that $f_*[\Sigma]=\tau$, where $[\Sigma]\in H_m(\Sigma, \mathbb{Z}_2)$ is the fundamental class of $\Sigma$. Analogously, denoting by $x \in H^n(\mathcal{M},\mathbb{Z}_2)$ the Poincaré dual of $\tau$, we say that $x$ is realized by a smooth submanifold whenever $\tau$ is.
\end{definition}

In analogy with \cite{ABCD}, an immediate corollary of Theorem \ref{t:1} is the absence of the \emph{Lavrentiev gap phenomenon} for the homological unoriented Plateau problem in absence of topological ostructions to realizability of mod 2 homology classes.

\begin{thm}[Absence of Lavrentiev gaps]\label{t:Lavrentiev}
Let $\mathcal{M}, \tau, T$ and $\Sigma$ as in Assumption \ref{a:1}, and define the following quantities:
\begin{align*}
&\mathbb{M}_{T}:=\min\{\mathbb{M}(T) : T \in \mathcal{Z}_m(\mathcal{M}, \mathbb{Z}_2) \cap \tau\},\\
&\mathbb{M}_{P}:=\inf\{\mathbb{M}(P) : P \in \mathcal{Z}_{m, Lip}(\mathcal{M}, \mathbb{Z}_2) \cap \tau\},\\
&\mathbb{M}_{\Sigma}:=\inf\{\text{Vol}\,^m(\Sigma) : \llbracket\Sigma\rrbracket\modtwo \in \tau \textrm{ and $\Sigma$ is smooth in $\mathcal{M} \setminus \mathcal{K}^{m-n-1}$ for some triangulation $\mathcal{K}$}\},\\
&\mathbb{M}_{{\rm Reg}} := \inf\{\text{Vol}\,^m(\Sigma) : \llbracket\Sigma\rrbracket\modtwo \in \tau \textrm{ and $\Sigma$ is smooth in $\mathcal{M}$}\}\, .
\end{align*}
Then, $\mathbb{M}_{T}= \mathbb{M}_{P}= \mathbb{M}_{\Sigma}$ and, moreover, $\mathbb{M}_T = \mathbb{M}_{{\rm Reg}}$ when $\tau$ is representable by a smooth submanifold.
\end{thm}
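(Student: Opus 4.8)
The plan is to deduce Theorem~\ref{t:Lavrentiev} essentially as a formal consequence of Theorem~\ref{t:1}, together with standard facts relating polyhedral, Lipschitz, and integral mod $2$ cycles. First I would prove the inequalities $\mathbb{M}_T \leq \mathbb{M}_P$ and $\mathbb{M}_T \leq \mathbb{M}_\Sigma$ (and $\mathbb{M}_T \leq \mathbb{M}_{\mathrm{Reg}}$): these are immediate, because every Lipschitz cycle is in particular an integral mod $2$ cycle, every smooth submanifold $\Sigma$ (in $\mathcal{M}$ or in $\mathcal{M}\setminus\mathcal{K}^{m-n-1}$ with closure in the skeleton) induces an integral mod $2$ cycle $\llbracket\Sigma\rrbracket\modtwo$ of mass equal to its volume, and the minimum $\mathbb{M}_T$ is attained over the whole class $\tau$ by the compactness theorem for integral mod $2$ currents in the compact manifold $\mathcal{M}$. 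For the same reason the $\min$ in the definition of $\mathbb{M}_T$ is legitimate.

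Next I would establish the reverse inequality $\mathbb{M}_\Sigma \leq \mathbb{M}_T$, which is exactly where Theorem~\ref{t:1} enters. Fix $\varepsilon>0$ and a mass-minimizing representative $T\in\mathcal{Z}_m(\mathcal{M},\mathbb{Z}_2)\cap\tau$ with $\mathbb{M}(T)=\mathbb{M}_T$. Applying Theorem~\ref{t:1} to $T$ produces a triangulation $\mathcal{K}$ and a smooth submanifold $\Sigma$ of $\mathcal{M}\setminus\mathcal{K}^{m-n-1}$ with $\llbracket\Sigma\rrbracket\modtwo$ homologous to $T$ (hence representing $\tau$) and $\mathcal{H}^m(\Sigma)\leq\mathbb{M}(T)+\varepsilon=\mathbb{M}_T+\varepsilon$. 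Thus $\mathbb{M}_\Sigma\leq\mathbb{M}_T+\varepsilon$ for every $\varepsilon>0$, giving $\mathbb{M}_\Sigma\leq\mathbb{M}_T$; combined with the previous paragraph this yields $\mathbb{M}_T=\mathbb{M}_\Sigma$. When $\tau$ is representable by a smooth submanifold, conclusion~(3) of Theorem~\ref{t:1} gives the $\Sigma$ above with $\Sigma$ smooth in all of $\mathcal{M}$, so the same argument gives $\mathbb{M}_{\mathrm{Reg}}\leq\mathbb{M}_T$ and hence $\mathbb{M}_T=\mathbb{M}_{\mathrm{Reg}}$.

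It remains to handle $\mathbb{M}_P$. The inequality $\mathbb{M}_T\leq\mathbb{M}_P$ was noted above; for $\mathbb{M}_P\leq\mathbb{M}_T$ I would use the deformation/polyhedral approximation theorem for integral mod $2$ currents: given the mass-minimizing $T$ and $\varepsilon>0$, one can find a mod $2$ Lipschitz cycle $P$ homologous to $T$ (for instance, by first using the smooth $\Sigma$ from Theorem~\ref{t:1}, pushing forward a fine polyhedral triangulation of $\Sigma$ by its embedding into $\mathbb{R}^N$ composed with a nearest-point retraction onto $\mathcal{M}$, which only distorts areas by a factor close to $1$) with $\mathbb{M}(P)\leq\mathcal{H}^m(\Sigma)+\varepsilon\leq\mathbb{M}_T+2\varepsilon$. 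Letting $\varepsilon\to0$ gives $\mathbb{M}_P\leq\mathbb{M}_T$, completing the chain $\mathbb{M}_T=\mathbb{M}_P=\mathbb{M}_\Sigma$.

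The main obstacle, such as it is, is the last point: producing a Lipschitz (equivalently polyhedral pushed into $\mathcal{M}$) competitor of nearly minimal mass in the same mod $2$ homology class. This is routine but requires care that the polyhedral approximation stays in the class $\tau$ and that the area is not increased by more than $\varepsilon$ — both are standard consequences of the deformation theorem and the regularity of the nearest-point projection onto the compact submanifold $\mathcal{M}\subset\mathbb{R}^N$, and one may alternatively cite the analogous argument in \cite{ABCD}. Everything else is a soft manipulation of infima and the direct application of Theorem~\ref{t:1}.
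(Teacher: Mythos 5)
Your proposal is correct and follows the natural route implied by the paper, which presents Theorem~\ref{t:Lavrentiev} as an ``immediate corollary'' of Theorem~\ref{t:1} without a separate written proof. The inequalities $\mathbb{M}_T\leq\mathbb{M}_P$, $\mathbb{M}_T\leq\mathbb{M}_\Sigma$, $\mathbb{M}_T\leq\mathbb{M}_{\mathrm{Reg}}$ are indeed soft (every Lipschitz cycle and every induced current $\llbracket\Sigma\rrbracket\modtwo$ with finite volume is an integral mod $2$ cycle in $\tau$; the $\min$ in $\mathbb{M}_T$ is legitimate by the compactness theorem for integral mod $2$ currents together with the flat-norm stability of the homology class), and $\mathbb{M}_\Sigma\leq\mathbb{M}_T$ (resp.\ $\mathbb{M}_{\mathrm{Reg}}\leq\mathbb{M}_T$) follows by applying Theorem~\ref{t:1}(1)--(2) (resp.\ also (3)) to a minimizer.

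One remark on your treatment of $\mathbb{M}_P\leq\mathbb{M}_T$: the specific device of ``pushing forward a fine polyhedral triangulation of $\Sigma$'' is slightly awkward, because the $\Sigma$ produced by Theorem~\ref{t:1} is in general only a submanifold of $\mathcal{M}\setminus\mathcal{K}^{m-n-1}$, hence noncompact, so triangulating it and extracting a finite polyhedral competitor requires some care near $\mathcal{K}^{m-n-1}$. It is cleaner to avoid $\Sigma$ altogether here: either invoke the mod $2$ strong polyhedral approximation theorem (\cite[$(4.2.20)^\nu$, $(4.2.21)^\nu$]{Federerbook}, already cited in the paper) directly on the minimizer $T$, or observe that the ``first approximation'' $P=\sum_{F\in\mathcal{F}^m}\llbracket F\rrbracket$ mod $2$ in Proposition~\ref{p:poly_approx_prescribedsing}($d_0$) is already a Lipschitz cycle homologous to $T$ with $\mathbb{M}(P)\leq(1+\varepsilon_c)\mathbb{M}(T)$. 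You do flag the deformation theorem as the real tool, so this is a streamlining rather than a gap.
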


\begin{remark}
We remark that for an integral mod 2 cycle $T \in \mathcal{Z}_m(\mathcal{M}, \mathbb{Z}_2)$ its mod 2 mass $\mathbb{M}(T)$ coincides with its \emph{size}, that is the $m$-dimensional Hausdorff measure $\mathcal{H}^m(R)$ of the corresponding rectifiable set $R$.
\end{remark}

Another simple corollary of Theorem \ref{t:1} is the following approximation theorem with integral mod 2 cycles of prescribed singularities.

\begin{thm}[Approximation by cycles with prescribed singular sets]\label{t:prescribed}
Let $\mathcal{M}, \tau$ and $T$ be as in Assumption \ref{a:1}. Then, there is a sequence of smooth triangulations $\mathcal{K}_j$ of $\mathcal{M}$ and a sequence of smooth embedded  $m$-dimensional submanifolds $(\Sigma_j)_j$ in $\mathcal{M}\setminus \mathcal{K}_j^{m-n-1}$ such that
\begin{itemize}
\item[(a)] $\llbracket \Sigma_j\rrbracket\modtwo \rightarrow T$ in the mod 2 flat topology,
\item[(b)] $\lim_{j\rightarrow \infty}\mathcal{H}^{m}(\Sigma_j) = \mathbb{M}(T)$,
\item[(c)]$\partial \llbracket \Sigma_j\rrbracket\modtwo =0$ and $\llbracket \Sigma_j\rrbracket\modtwo$ is in the same homology class as $T$.
\end{itemize}
\end{thm}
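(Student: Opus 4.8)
The plan is to derive Theorem \ref{t:prescribed} directly from Theorem \ref{t:1} by applying it to a sequence $\varepsilon_j \downarrow 0$. Fixing $\mathcal{M}, \tau$ and $T$ as in Assumption \ref{a:1}, for each $j \in \N$ set $\varepsilon_j := 1/j$ and invoke Theorem \ref{t:1} with $\varepsilon = \varepsilon_j$: this produces a smooth triangulation $\mathcal{K}_j$ of $\mathcal{M}$ and an $m$-dimensional smooth submanifold $\Sigma_j$ of $\mathcal{M}\setminus \mathcal{K}_j^{m-n-1}$ satisfying properties (1) and (2) of that theorem. The point is simply that the three conclusions of Theorem \ref{t:1} translate, upon taking $j\to\infty$, into the three conclusions (a), (b), (c) of the present statement.

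Concretely: conclusion (c) of Theorem \ref{t:prescribed} is immediate, since property (2) of Theorem \ref{t:1} already asserts that $\llbracket \Sigma_j\rrbracket\modtwo$ is a cycle (so $\partial\llbracket\Sigma_j\rrbracket\modtwo = 0$) homologous to $T$, for every $j$. For (a), property (2) furnishes an $(m+1)$-dimensional integral mod 2 current $S_j$ with $\partial S_j = \llbracket\Sigma_j\rrbracket\modtwo - T$ and $\Mass(S_j) < \varepsilon_j$; since $\llbracket\Sigma_j\rrbracket\modtwo - T$ is itself a cycle, the mod 2 flat distance $\Flat(\llbracket\Sigma_j\rrbracket\modtwo - T)$ is bounded by $\Mass(S_j) < \varepsilon_j \to 0$, which is exactly convergence in the mod 2 flat topology. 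For (b), property (1) gives $\Haus^m(\Sigma_j) \le \Mass(T) + \varepsilon_j$, hence $\limsup_j \Haus^m(\Sigma_j) \le \Mass(T)$; the matching lower bound $\liminf_j \Haus^m(\Sigma_j) \ge \Mass(T)$ follows from lower semicontinuity of the mod 2 mass under flat convergence together with (a), using the identification $\Mass(\llbracket\Sigma_j\rrbracket\modtwo) = \Haus^m(\Sigma_j)$ (the mass of the cycle induced by a smooth submanifold equals its volume, as recorded in the remark following Theorem \ref{t:Lavrentiev}, since $T$ here is mass-minimizing is not needed — we only use that $\Mass$ is flat-lower-semicontinuous and that $T$ represents $\tau$ so $\Mass(T)$ is an upper bound along a homologous sequence only in the limsup direction). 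Thus $\lim_j \Haus^m(\Sigma_j) = \Mass(T)$.

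There is no real obstacle here: the only mild subtlety is making sure that the lower semicontinuity argument for (b) is applied to the correct quantity, namely that $\Flat$-convergence $\llbracket\Sigma_j\rrbracket\modtwo \to T$ implies $\Mass(T) \le \liminf_j \Mass(\llbracket\Sigma_j\rrbracket\modtwo) = \liminf_j \Haus^m(\Sigma_j)$, which is the standard lower semicontinuity of mass for integral currents mod 2 under flat convergence (valid since all the $\llbracket\Sigma_j\rrbracket\modtwo$ and $T$ have supports in the compact manifold $\mathcal{M}$). Combined with the upper bound from property (1), this pins down the limit and completes the proof.
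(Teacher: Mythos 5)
Your argument is correct and matches what the paper intends: Theorem \ref{t:prescribed} is explicitly presented as a simple corollary of Theorem \ref{t:1}, obtained by running that theorem along a sequence $\varepsilon_j \downarrow 0$, with (a) read off from the bound $\mathbb{F}(\llbracket\Sigma_j\rrbracket\modtwo - T)\le \mathbb{M}(S_j)<\varepsilon_j$, (c) from property (2), and (b) from combining the upper bound of property (1) with lower semicontinuity of mod 2 mass under flat convergence. The only remark I would make is stylistic: the parenthetical in your treatment of (b) is garbled mid-sentence; the clean statement is simply that flat convergence plus lower semicontinuity of mass gives $\mathbb{M}(T)\le \liminf_j \mathcal{H}^m(\Sigma_j)$, which together with the upper bound pins down the limit.
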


\begin{remark}
If every mod 2 homology class of $\mathcal{M}$ admits a smooth embedded representative, then it is customary to call such manifolds $\mathbb{Z}_2$ \emph{totally realizable}. Examples of $\mathbb{Z}_2$ totally realizable manifolds are spheres $S^{k}$ and products of spheres $S^{k_1} \times S^{k_2} \times \ldots \times S^{k_i}$, or projective spaces $\mathbb{R}\mathbb{P}(n)$ and $\mathbb{C}\mathbb{P}(n)$ of real and complex dimension $n$ respectively, \emph{cfr.} \cite[Section 8]{Suzuki}. Clearly, in all such manifolds every mod 2 integral cycles can be approximated by smooth submanifolds by Theorem \ref{t:1}.
\end{remark}

\subsection{Motivation and overview of the proof}
\medskip
\noindent\textbf{Previous literature}
\vspace{0.1cm}

\noindent
Introduced in \cite{FedererFleming60} to provide a suitable framework to solve the oriented Plateau problem without topological or dimensional restrictions, Federer and Fleming's theory of integral currents necessarily requires the domain of integration to be oriented and homology classes to have integer coefficients; in order to generalize Federer and Fleming's theory to unoriented domains, it is possible to replace $\mathbb{Z}$ as coefficient group by the cyclic group $\mathbb{Z}_2$ of order 2: this has been done at the beginning of the sixties by Ziemer, later improved and generalized to any finite coefficient group by Fleming, see \cite{Ziemer, Fleming66}. Many regularity properties have been later derived for integral currents mod 2, both in terms of approximation theorems and in terms of \textit{a posteriori} regularity for mass-minimizers.

About the latter aspect of regularity theory, it is a corollary of Federer's dimension reduction argument, \textit{cfr.} \cite{Federer70}, that mod 2 mass-minimizing integral currents are induced by smooth submanifolds, up to an interior singular set of Hausdorff codimension at least 2, which turns out to be discrete for two-dimensional mod 2 currents \textit{cfr.} \cite{Almgren66}. In the particular case of codimension $n=1$, it is possible to derive finer properties of the singular set, showing that it is $m-7$-rectifiable and with locally finite $m-7$-Hausdorff measure as a consequence of the quantitative stratification theory by Naber and Valtorta, see \cite{NaberValtorta}. In codimension $n\geq 2$, it is a theorem of Simon the $m-2$-rectifiability of the singular set, \textit{cfr.} \cite[Corollary 1]{Simon93}, which is also now known to be of locally finite $m-2$-Hausdorff measure by again the methods of Naber and Valtorta in \cite{NaberValtorta}. 

About the former aspect of regularity, instead, the only \emph{a priori} approximation-type result for mod 2 integral currents was the classical mod 2 deformation theorem, \textit{cfr.} \cite[Theorem 4.2]{Ziemer} or \cite[(4.2.9)$^{\nu}$]{Federerbook} and, as a corollary, the mod 2 strong polyhedral approximation theorem, \textit{cfr.} \cite[(4.2.20)$^{\nu}$, (4.2.21)$^{\nu}$]{Federerbook}, both developed as simple adaptations of the ones for integral currents. A natural question is then to ask whether it is possible to approximate any intergral mod 2 current representing a $\mathbb{Z}_2$ homology class of a $-$ possibily non-orientable $-$ smooth closed Riemannian manifold by a smooth embedded submanifold; as already mentioned, there exist mod 2 homology classes which do not admit any smooth embedded representative: hence, in general the answer is negative. In analogy with \cite{ABCD}, this article provides an affirmative answer to the previous question every time the mod 2 homology class admits a smooth embedded representative and, in particular, it provides sharp estimates in full generality on the singular sets of the approximating sequence.

\vspace{0.25cm}
\noindent\textbf{Differences with integral homology and new constructions}
\vspace{0.1cm}

\noindent
We describe here the main ideas proof, whose construction closely follows the approach introduced in \cite{ABCD}. In particular, we emphasize its main differences from integral homology, which are mostly of topological nature and allow for the development of a finer argument than the one used in \cite{ABCD}. This yields a significantly better estimate for the singular set, which further improves with higher codimensions and which is due to the particularly well-behaved homotopy type of the Thom space of the (unoriented) $n$-plane bundle, see Theorem \ref{t:productEM} and Corollary \ref{c:restriction}. The key geometric measure theory ingredients $-$ like the \emph{codimension 2} smooth approximation theorem \cite[Proposition 4.1]{ABCD} and the \emph{squeezing} lemma \cite[Proposition 4.3]{ABCD} $-$ are, instead, easily adapted from the integral setting and hold for integral mod 2 cycles with minor modifications, see Section \ref{s:GMT}. Moreover, it is worth mentioning that proving sharpness of the codimension $n+1$ singular set of Theorem \ref{t:1} is even subtler in mod 2 homology than in integral homology, since singularities that appear in this context all arise from the impossibility of finding embeddings in low codimensions, and not $-$ as for integral classes $-$ by innate singularities obstructing Steenrod representability also; in particular, we cannot exploit Sullivan's geometric theory of resolution of singularities by blow-up in \cite{SullivanLiverpool}, as done in \cite[Theorem 6.3]{ABCD}. Instead, we need to rely on the singularity theory of stable mappings, coupled with an elegant result due to Grant and Sz\H{u}cs \cite{Grantszucs} about obstructions to realizability of mod 2 homology classes by immersions, see Section \ref{s:optimality}; we refer to \cite{GGbook} for an accessible overview on the theory of stable mappings.\vspace{0.25cm}

\vspace{0.1cm}
\noindent\textbf{Outline of the proof}
\vspace{0.1cm}

Starting from an integral mod 2 cycle $T$ representing a $\mathbb{Z}_2$ homology class $\tau$ of $\mathcal{M}$, we first approximate it with an integral mod 2 cycle $P'$ which is a smooth submanifold out of a small $\delta$-neighborhood $B_\delta$ of the $m-2$-skeleton of some smooth triangulation of $\mathcal{M}$, \emph{cfr.} Proposition \ref{p:poly_approx_prescribedsing}; in particular, a subset of the smooth part of $P'$ is a compact smooth submanifold with boundary embedded in a compact manifold with boundary denoted $\Omega$ (as in \cite{ABCD}, $\Omega$ can be thought as $\mathcal{M} \setminus B_\delta$, up to taking smooth neighborhoods of skeleta described in Section \ref{s:smoothingneighborhood}). This smooth part represents a relative homology class in $H_m(\Omega, \partial \Omega, \mathbb{Z}_2)$ and by the relative Pontryagin-Thom construction, \emph{cfr.} Theorem \ref{t:Thomboundary}, it induces a map $F: \Omega \rightarrow T(\gamma^n)$ with values in the Thom space of the universal $n$-plane bundle such that the pull-back of the Thom class is the Poincaré dual of $\tau$, once restricted to $\Omega$.

At this point, had we used the strategy adopted in the integral setting, we would now have simply been able to exploit the $n+2$-equivalence between $T(\gamma^n)$ and the Eilenberg-MacLane space $K(\mathbb{Z}_2,n)$ to prove that the restriction of the Poincaré dual of $\tau$ to the complement of a small neighborhood $U_\delta$ of the $(m-3)$-skeleton of the smooth triangulation of $\mathcal{M}$ admits a lift to $T(\gamma^n)$ pulling back the Thom class to itself; this would have provided an integral mod 2 cycle in $\tau$ given by a closed smooth embedded submanifold with singularities all contained in the $(m-3)$-dimensional skeleton $\mathcal{K}^{m-3}$ of $\mathcal{M}$. 

Instead, the homotopy type of $T(\gamma^n)$ behaves much better in the mod 2 case since, for every $n\geq 1$, $T(\gamma^n)$ is $2n$-equivalent to a complex $Y$ which is a \emph{product} of Eilenberg-Mac Lane spaces $K(\mathbb{Z}_2,n)$. This nicer structure allows us to exploit the $2n$-equivalence $H:T(\gamma^n) \rightarrow Y$ to obtain a map $g$ from the $2n$-skeleton of $K(\mathbb{Z}_2,n)$ to $T(\gamma^n)$ pulling-back the Thom class to the fundamental class of $K(\mathbb{Z}_2,n)$, simply by taking the restriction to the first factor of the inverse map of $H$, \emph{cfr.} Corollary \ref{c:restriction}. Nevertheless, we remark that it is still crucial for us to rely on the fact that the map $h: T(\gamma^n) \rightarrow K(\mathbb{Z}_2,n)$ itself is an $n+2$-equivalence for any $n\geq1$, \emph{cfr.} Lemma \ref{l:n+2_equivalenza}, since this allows us to perform the last part of the argument of the whole proof.

Hence, denoting by $Q$ the complement of a small neighborhood $U_\delta$ of the $(m-n-1)$-skeleton of the smooth triangulation of $\mathcal{M}$, it is possible to note that $Q$ has the homotopy type of a $2n$-dimensional skeleton of $\mathcal{M}$, \textit{cfr.} Lemma \ref{l:co-skeleton}. Thus, we exploit the $2n$-equivalence between $T(\widetilde{\gamma}^n)$ and the product $Y$ of Eilenberg-MacLane space $K(\mathbb{Z}_2,n)$, \textit{cfr.} Theorem \ref{t:productEM}, to obtain a map $g: K(\mathbb{Z}_2,n)^{2n} \rightarrow T(\gamma^n)$ pulling-back the Thom class to the fundamental class of $K(\mathbb{Z}_2,n)$ as described above; this allows us to prove that the restriction of the Poincaré dual of $\tau$ to $Q$ admits a lift \begin{equation}\label{e:riassuntof}f:Q \rightarrow T(\gamma^n)\end{equation} pulling back the Thom class to itself. Applying Theorem \ref{t:Thomboundary} and after some technicalities, this provides an integral mod 2 cycle $R$ homologous to $\tau$ which is a closed smooth embedded submanifold with singularities all contained in the $(m-n-1)$-dimensional skeleton $\mathcal{K}^{m-n-1}$ of $\mathcal{M}$.

Since, by Lemma \ref{l:co-skeleton}, $\Omega$ has the homotopy type of an $n+1$-dimensional complex, the $n+2$-equivalence between $T(\gamma^n)$ and $K(\mathbb{Z}_2,n)$ of Lemma \ref{l:n+2_equivalenza} allows us to conclude that homotopy classes of maps defined on $\Omega$ and with values in $T(\gamma^n)$ are in one-to-one correspondence with those with values in $K(\mathbb{Z}_2,n)$, \textit{cfr.} Corollary \ref{c:Whitehead_Homotopyclasses}. Hence, we can conclude that the smooth part of $P'$ coincides, up to a homotopy, with the smooth part of $R$ restricted to $\Omega$. 

The conclusion then follows in analogy with \cite{ABCD} from a technical geometric measure theory \emph{squeezing} lemma, \textit{cfr.} Proposition \ref{p:squash}, saying that if two $m$-dimensional integral mod 2 cycles $P'$ and $R$ agree outside of a sufficiently small neighborhood of the $(m-2)$-skeleton of a smooth triangulation of $\mathcal{M}$, then we can find a smooth deformation $R'$ of $R$ which is almost coinciding with $R$ and with mass close to the mass of $P'$. This provides the desired approximation $R'$ of Theorem $\ref{t:1}$, satisfying $(1)$ and $(2)$.

Finally, part $(3)$ of Theorem $\ref{t:1}$ is proved following the same lines: under the additional assumption that $\tau$ is representable by a smooth embedded submanifold we immediately obtain a map $$\ell:\mathcal{M}\rightarrow T(\gamma^n)$$ which pulls-back the Thom class to the Poincaré dual of $\tau$; the analogous construction can thus be performed just by replacing the map $f$ in \eqref{e:riassuntof} with $\ell$.

The rest of the paper is organized as follows. In Section \ref{s:notationandpreliminaries} we briefly recall the main notation and results in the theory of integral currents mod 2, in homotopy theory and cobordism. In Section \ref{s:smoothingneighborhood} we recall from \cite{ABCD} some technical preliminary lemmas about neighborhoods of skeleta and maps associated to them. In Section \ref{s:GMT} we state the adapted mod 2 version of the two main technical propositions from geometric measure theory, \emph{i.e.} Proposition \ref{p:poly_approx_prescribedsing} and Proposition \ref{p:squash}, and we comment on their proofs. Finally, Section \ref{s:proofofmain} is dedicated to the proof of Theorem \ref{t:1} and Section \ref{s:optimality} shows the optimality of the construction.

\subsection*{Acknowledgements}
I am very grateful to Sylvain Cappell for many enjoyable conversations and the NYU Courant Institute of Mathematical Sciences for its kind hospitality; I also thank Camillo De Lellis for his support, and Riccardo Ghiloni for mentioning Teichner's example in \cite{Teichner95}. This research has been funded by the Foundation Blanceflor Boncompagni Ludovisi, née Bildt, whose sponsorship is particularly acknowledged.

\section{Notation and preliminary results}\label{s:notationandpreliminaries}
We recall here the main definitions and relevant notation.

\subsection{Theory of integral currents mod 2}
To set up the main terminology, we briefly recall the theory of integral mod 2 currents as developed by Ziemer and completed (and generalized) by Fleming in \cite{Ziemer, Fleming66}. As a standard reference we also refer to \cite[4.2.26]{Federerbook}. For an introduction to the theory of integral currents we refer to \cite{FedererFleming60, Federerbook, Simonbook} or \cite[Section 2.1]{ABCD}.

We denote by $\mathbf{R}_k(\mathbb{R}^N)$ the space of $k$-dimensional integer rectifiable currents, by $\mathbf{I}_k(\R^N)$ the subgroup of $k$-dimensional integral currents and by $\mathbf{F}_k(\mathbb{R}^N)$ the space of $k$-dimensional integral flat chains in $\mathbb{R}^N$, that is $$\mathbf{F}_k(\mathbb{R}^N):=\{T + \partial S : T \in \mathbf{R}_k(\mathbb{R}^N), \,S \in \mathbf{R}_{k+1}(\mathbb{R}^N)\}.$$ If $M \subset \mathbb{R}^N$ is a compact Lipschitz neighborhood retract, then $\mathbf{R}_k(M)$ will denote the space of $T \in \mathbf{R}_k(\mathbb{R}^N)$ with support $\spt(T)\subset M$; analogously for $\mathbf{F}_k(M)$ and $\mathbf{I}_k(M)$.

\noindent
For $T \in\mathbf{F}_m(M)$, the (integral) flat norm of $T$ is defined as $$\mathbb{F}(T):=\inf \{ \Mass(R) + \Mass(S) \mid T=R +\partial S, \, R \in \mathbf{I}_k(M),\, S \in \mathbf{I}_{k+1}(M)\}.$$

\noindent
For $T \in \mathbf{F}_k(M)$ we recall that its mod 2 flat norm can be defined as $$\mathbb{F}^2(T):= \inf\{\mathbb{F}(T+2Q) : Q \in \mathbf{F}_k(M)\},$$ its mod 2 mass as $$\mathbb{M}^2(T):= \inf\{\mathbb{M}(T + 2Q) : Q \in \mathbf{F}_k(M)\}$$ and its mod 2 support as $$\spt^2(T):= \bigcap \{\spt(R) : R \in \mathbf{F}_k(M), \mathbb{F}^2(T-R)=0\}.$$ 

\noindent
We will usually drop the index 2 when this is clear from the context. The space of flat chains mod 2 in $M$ is defined as the quotient group $\mathbf{F}_k(M,\mathbb{Z}_2):=\mathbf{F}_k(M)/2\mathbf{F}_k(M)$ and if $T \in \mathbf{F}_k(M)$, we will denote with $[T]$ or $T$ mod 2 the coset in $\mathbf{F}_k(M,\mathbb{Z}_2)$ containing $T$. Analogously, we can define the space of integer rectifiable currents mod 2 and of integral currents mod 2, respectively denoted by $$\mathbf{R}_k(M,\mathbb{Z}_2):=\mathbf{R}_k(M)/2\mathbf{R}_k(M), \quad \mathbf{I}_k(M,\mathbb{Z}_2):=\mathbf{I}_k(M)/2\mathbf{I}_k(M).$$ 

\noindent
If $T$ is flat chain mod 2 with an integer coefficient polyhedral chain as a representative, then $T$ will be called a polyhedral chain mod 2; analogously for Lipschitz chains mod 2.

In particular, we remark that an integer rectifiable current $T =\llbracket E,\tau, \theta \rrbracket \in \Rect_k(M)$ is called \emph{representative mod 2} whenever $|\theta(x)|\leq 1$ for $\|T\|$-$a.e.$ point. Hence, every integer rectifiable current can be written as $T+2Q$, with $T, Q \in \Rect_k(M)$ and $T$ is a representative mod 2. In particular, $\Mass^2([T])= \Mass(T)$ and $\spt^2([T])=\spt(T)$, whenever $T \in \Rect_k(M)$ and $T$ is a representative mod 2. If $T \in \Rect_k^2(M)$ and $B \subset \mathbb{R}^N$ is a Borel subset, we define $T\res A:= [T\res A]$, where $T$ is a rectifiable representative of $T$ mod 2.

If $T \in \mathbf{F}_k(M)$ is a flat chain, then the boundary operator $\partial^2$ on $\mathbf{F}^2_k(M)$ is defined by $\partial^2 [T]=[\partial T]$, since if $T=S$ mod 2, then also $\partial T=\partial S$ mod 2. A flat chain $T \in \mathbf{F}_k(M)$ is a cycle mod 2 if $\partial T=0$ mod 2, \emph{i.e.} $\partial^2 [T]=0$ and it is a boundary mod 2 if there exists $S \in \mathbf{F}_{k+1}(M)$ such that $T=\partial S$ mod 2, \emph{i.e.} $[T]=\partial^2 [S]$; the spaces of $k$-dimensional integral mod 2 cycles and $k$-dimensional integral mod 2 boundaries with $\spt^2(\cdot) \subset M$ are denoted by $\mathcal{Z}_k(M,\mathbb{Z}_2)$ and $\mathcal{B}_k(M,\mathbb{Z}_2)$, respectively; note that $\mathcal{B}(M,\mathbb{Z}_2)$ is a normal subgroup of $\mathcal{Z}(\mathcal{M},\mathbb{Z}_2)$. If $\mathcal{M}$ is a closed smooth manifold, we define its mod 2 homology groups as $$H_*(\mathcal{M},\mathbb{Z}_2)= \mathcal{Z}_*(\mathcal{M},\mathbb{Z}_2)/\mathcal{B}_*(M,\mathbb{Z}_2),$$ and it is possible to prove, as a consequence of the deformation theorem, that the homology theory so defined satisfies all seven Eilenberg-Steenrod Axioms, and hence it is equivalent to the singular homology of $\mathcal{M}$ with coefficients in $\mathbb{Z}_2$. In the following sections, we will slightly abuse notation between currents mod 2 and their representatives. Moreover, when the coefficients are not specified, homology and cohomology have to be understood with coefficients in $\mathbb{Z}_2$.

\subsection{Homotopy theory and cobordism}
We briefly recall the main topological notions that will be used later; we also refer to \cite{Spanier, Switzer, Thom54, MilnorStasheff} and to \cite{ABCD}.

For $n \geq 1$ and an abelian group $\pi$, the \emph{Eilenberg-MacLane space} $K(\pi, n)$ is a space with the homotopy type of a $CW$-compex such that $\pi_i(K(\pi,n))$ vanishes for $i \neq n$ and $\pi_n(K(\pi,n))\simeq \pi$, where $\pi_i (X)$ denotes the $i$-th homotopy group of the topological space $X$. Recall that $K(\pi, n)$ is the classifying space of $n$-dimensional cohomology with coefficients in $\pi$, meaning that for a (connected) $CW$-complex $X$, an abelian group $\pi$ and for every $n \in \mathbb{N}\setminus \{0\}$ there is a natural isomorphism $$T: [X, K(\pi, n)] \rightarrow H^n(X,\pi),$$ where $[X, K(\pi, n)]$ represents the set of (unbasedpointed) homotopy classes of continuous maps from $X$ to $K(\pi, n)$ and $H^n (X, \pi)$ is the $n$-th cohomology group of $X$ with coefficients in $\pi$.

Given a continuous map $f: X \rightarrow Y$ between topological spaces we denote by $M_f$ its mapping cylinder, that is the quotient space formed from the disjoint union $(X \times [0,1]) \sqcup Y$ by identifying, for each $x \in X$, the point $(x, 1)$ with $f(x) \in Y$; it contains $X \times\{0\}$ as a subspace and has $Y$ as a deformation retract. We also recall that a continuous map $f:X \rightarrow Y$ between path-connected $CW$-complexes is an \emph{$n$-equivalence} for $n\geq1$ if the induced homomorphism $$f_* : \pi_i(X) \rightarrow \pi_i(Y)$$ is an isomorphism for $0<i<n$ and an epimorphism for $i=n$. In particular $f:X \rightarrow Y$ is an $n$-equivalence if and only if the inclusion $i:X \rightarrow M_f$ is an $n$-equivalence. From the long exact sequence of relative homotopy groups, it follows that $i$ is an $n$-equivalence if and only if the relative homotopy group $\pi_i(M_f,X)=0$ vanishes for all $i\leq n$. As usual, with a slight abuse we will sometimes write $\pi_i(Y,X)=0$, meaning $\pi_i (M_f,X)=0$ when the map $f$ is clear from the context. 

We state here a few facts that will be useful later on in the proof.


\begin{corollary}[\protect{\cite[Corollary 7.6.23]{Spanier}}]\label{c:Whitehead_Homotopyclasses}
If $K$ is a $CW$-complex and $f: X \rightarrow Y$ is an $n$-equivalence, then the induced homomorphism $f_*:[K, X] \rightarrow[K, Y]$ is a bijection if $\operatorname{dim} K<n$ and a surjection if $\operatorname{dim} K=n$. 
\end{corollary}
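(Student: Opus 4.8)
The plan is to reduce the statement to the \emph{compression lemma} (the relative form of Whitehead's theorem) after replacing $f$ with an inclusion. Let $M_f$ be the mapping cylinder of $f$, let $i\colon X\hookrightarrow M_f$ be the canonical inclusion and $r\colon M_f\to Y$ the deformation retraction, so that $r\circ i=f$ and $r$ is a homotopy equivalence; consequently $r_*\colon[K,M_f]\to[K,Y]$ is a bijection and $f_*=r_*\circ i_*$. It therefore suffices to prove that $i_*\colon[K,X]\to[K,M_f]$ is a bijection when $\dim K<n$ and a surjection when $\dim K=n$. As recalled above, the hypothesis that $f$ is an $n$-equivalence is equivalent to $\pi_q(M_f,X)=0$ for every $q\le n$.

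Next I would invoke the compression lemma in the following form: if $(P,A)$ is a $CW$ pair all of whose cells in $P\setminus A$ have dimension $\le n$, and $(Z,W)$ is a pair of spaces with $\pi_q(Z,W)=0$ for $1\le q\le n$ (and $W$ meeting every path component of $Z$), then every map of pairs $(P,A)\to(Z,W)$ is homotopic, rel $A$, to a map with image contained in $W$. This is the classical statement, proved by induction over the skeleta of $P$ relative to $A$: once the map has been compressed over the $(q-1)$-skeleton, extending the compression across a $q$-cell is exactly the problem of deforming a map $(D^q,S^{q-1})\to(Z,W)$ into $W$, which is solvable precisely because $\pi_q(Z,W)=0$, and the homotopy extension property of the $CW$ pair lets one carry this out simultaneously over all $q$-cells. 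As this is entirely standard, in the text I would simply cite it --- it is the content of the results preceding \cite[Corollary 7.6.23]{Spanier}.

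Granting the lemma, \textbf{surjectivity} for $\dim K\le n$ is immediate: given $[\varphi]\in[K,M_f]$, regard $\varphi$ as a map of pairs $(K,\emptyset)\to(M_f,X)$; since every cell of $K$ has dimension $\le n$ and $\pi_q(M_f,X)=0$ for $q\le n$, the lemma produces a homotopy from $\varphi$ to a map $\psi\colon K\to X$, that is $i_*[\psi]=[\varphi]$; hence $f_*=r_*\circ i_*$ is surjective. For \textbf{injectivity} when $\dim K<n$, suppose $\psi_0,\psi_1\colon K\to X$ satisfy $f_*[\psi_0]=f_*[\psi_1]$; since $r_*$ is a bijection, this gives $i\circ\psi_0\simeq i\circ\psi_1$ in $M_f$, say via a homotopy $G\colon K\times I\to M_f$. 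Applying the compression lemma to the $CW$ pair $(K\times I,\ K\times\partial I)$ --- whose relative cells have dimension at most $\dim K+1\le n$ --- and to the map of pairs $G\colon(K\times I,K\times\partial I)\to(M_f,X)$ yields a homotopy, rel $K\times\partial I$, from $G$ to a map $G'\colon K\times I\to X$, and $G'$ is exactly a homotopy $\psi_0\simeq\psi_1$ inside $X$. Therefore $i_*$, and hence $f_*$, is injective.

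The only delicate points are of a bookkeeping nature: that passing to $M_f$ alters none of the relevant sets $[K,-]$ (clear, since $r$ is a homotopy equivalence), and that $K\times I$ has cells of dimension at most $\dim K+1$ --- which is precisely why injectivity requires the strict inequality $\dim K<n$ whereas surjectivity needs only $\dim K\le n$. The one genuinely substantial ingredient, the skeleton-by-skeleton compression, is the classical proof of Whitehead's theorem and would be quoted rather than reproduced here.
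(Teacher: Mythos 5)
Your proof is correct and is the standard argument. The paper does not supply its own proof of this statement — it simply cites \cite[Corollary~7.6.23]{Spanier} — and the mapping-cylinder reduction followed by the relative compression lemma (applied to $(K,\emptyset)\to(M_f,X)$ for surjectivity and to $(K\times I,K\times\partial I)\to(M_f,X)$ for injectivity) is exactly the route Spanier takes in the material leading up to that corollary.
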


\begin{thm}[\protect{\cite[Theorem II.6]{Thom54}}]\label{t:II.6}
    Let $f: X \rightarrow Y$ a map between two simply connected $CW$-complexes $X,Y$. For $p$ prime, if for any group coefficient $\mathbb{Z}_p$ the induced homomorphism $$f^*: H^i(Y,\mathbb{Z}_p) \rightarrow H^i(X,\mathbb{Z}_p)$$ is an isomorphism when $i<k$ and a monomorphism when $i=k$, then the relative homotopy groups $\pi_i(Y,X)=0$, for $i\leq k$ and $f$ is a $k$-equivalence.
\end{thm}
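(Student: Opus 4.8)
The plan is to reduce the statement to a relative Whitehead-type assertion and then apply the relative Hurewicz theorem. First I would replace $f$ by an inclusion: after a cellular approximation, pass to the mapping cylinder $M_f$, so that $X$ sits as a subcomplex of $M_f$ and $M_f$ deformation retracts onto $Y$. It then suffices to prove that the (simply connected) CW pair $(M_f,X)$ satisfies $\pi_i(M_f,X)=0$ for all $i\le k$, since the long exact sequence of the pair then immediately shows that $f_*\colon\pi_i(X)\to\pi_i(Y)$ is an isomorphism for $i<k$ and an epimorphism for $i=k$, which is exactly the claim that $f$ is a $k$-equivalence.

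Next I would translate the cohomological hypothesis into homology. For a prime $p$ the field $\mathbb{Z}_p$ makes $H^*(-;\mathbb{Z}_p)$ the $\mathbb{Z}_p$-linear dual of $H_*(-;\mathbb{Z}_p)$, and $f^*$ is the dual of $f_*\colon H_*(X;\mathbb{Z}_p)\to H_*(M_f;\mathbb{Z}_p)$; dualizing vector spaces turns isomorphisms into isomorphisms and monomorphisms into epimorphisms, so the hypothesis says precisely that $f_*$ with $\mathbb{Z}_p$ coefficients is an isomorphism for $i<k$ and an epimorphism for $i=k$. Inserting this into the long exact sequence of the pair gives $H_i(M_f,X;\mathbb{Z}_p)=0$ for $i\le k$ and every prime $p$. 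An induction on the degree, via the universal coefficient theorem, then upgrades this to integral vanishing: if $H_j(M_f,X;\mathbb{Z})=0$ for $j<i\le k$, then $H_i(M_f,X;\mathbb{Z}_p)\cong H_i(M_f,X;\mathbb{Z})\otimes\mathbb{Z}_p$, so $H_i(M_f,X;\mathbb{Z})$ is a finitely generated group — this is where the finiteness of the complexes, implicit in Thom's setting, is used — with $H_i(M_f,X;\mathbb{Z})\otimes\mathbb{Z}_p=0$ for all $p$, hence trivial. Thus $H_i(M_f,X;\mathbb{Z})=0$ for all $i\le k$.

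Finally I would run the relative Hurewicz theorem. Since $X$ and $M_f$ are simply connected, the pair $(M_f,X)$ is $1$-connected and $\pi_1(X)$ acts trivially on the relative homotopy groups, so inductively: if $\pi_j(M_f,X)=0$ for $j<i$ with $2\le i\le k$, the relative Hurewicz isomorphism gives $\pi_i(M_f,X)\cong H_i(M_f,X;\mathbb{Z})=0$. Hence $\pi_i(M_f,X)=0$ for all $i\le k$, which by the first paragraph means $\pi_i(Y,X)=0$ for $i\le k$ and $f$ is a $k$-equivalence.

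The main obstacle — essentially the only non-formal point — is the passage from mod-$p$ homology vanishing to integral homology vanishing: without a finiteness assumption this step can fail, since a divisible torsion group such as $\mathbb{Z}(p^\infty)$ is invisible to every $\mathbb{Z}_p$ (and to $\mathbb{Q}$). One therefore either invokes that the complexes in play have finitely generated homology in each degree — which holds for the Thom spaces and Eilenberg–MacLane spaces to which the theorem is applied — or strengthens the hypotheses accordingly; the remainder is the standard mapping cylinder / universal coefficients / relative Hurewicz package.
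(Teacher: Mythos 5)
The paper does not prove this theorem; it cites Thom's Théorème II.6 directly, so there is no in-paper argument to compare against. Your proof is the standard one — mapping cylinder to reduce to a pair, dualizing field-coefficient cohomology to homology, universal coefficients to pass from all primes $p$ to integral vanishing, and then relative Hurewicz — and it is essentially Thom's own strategy (which runs through Serre's theory of classes). You are right to flag the only delicate point: the step $H_i(M_f,X;\mathbb{Z}_p)=0$ for all primes $p$ implies $H_i(M_f,X;\mathbb{Z})=0$ needs the relative homology to be finitely generated in each degree (a divisible group like $\mathbb{Q}$ or $\mathbb{Z}(p^\infty)$ would slip through). In the paper's applications — Thom spaces $T(\gamma^n)$ over finite Grassmannians, Eilenberg--MacLane spaces $K(\mathbb{Z}_2,n)$, and their finite products — all spaces are of finite type, so both $X$ and $M_f\simeq Y$ have finitely generated homology in each degree, and by the long exact sequence so does $H_i(M_f,X;\mathbb{Z})$; your argument then closes as stated. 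The dualization step ($f^*$ mono $\Leftrightarrow f_*$ epi, $f^*$ iso $\Leftrightarrow f_*$ iso over a field) holds without any finiteness assumption, so the only place finiteness enters is exactly where you said it does.
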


 We denote by $G_n(\mathbb{R}^{n+k})$ the Grassmannian manifold, that is the space of $n$-dimensional subspaces in $\mathbb{R}^{n+k}$. The natural embedding $\mathbb{R}^n \hookrightarrow \mathbb{R}^{n+1}$, induces an embedding $G_n(\mathbb{R}^{n}) \hookrightarrow G_n(\mathbb{R}^{n+1})$ and thus, forming the union over increasing dimensions, we obtain an infinite $CW$-complex named the \emph{infinite Grassmannian} $$G_n:=G_n(\mathbb{R}^\infty)=\bigcup_k G_n(\mathbb{R}^{n+k}),$$ as the set of all $n$-dimensional linear subspaces of $\mathbb{R}^{\infty}$, endowed with the direct limit topology, \textit{i.e.} a subset of $G_n$ is open if and only if its intersection with $G_n(\mathbb{R}^{n+k})$ is open as a subset of $G_n(\mathbb{R}^{n+k})$ for each $k$. We denote $\gamma^n$ the \emph{universal n-plane bundle}, that is the canonical vector bundle over the base space $G_n$ $$E \xrightarrow{\pi} G_n$$ with total space $E$ consisting of pairs $(\ell, v) \in G_n(\mathbb{R}^{\infty}) \times \mathbb{R}^{\infty}$ such that $v \in \ell$, topologized as a subset of the cartesian product, and with projection $\pi:E \rightarrow G_n$ such that $\pi(\ell,v)=\ell$. Since the classifying space $G_n$ for $n$-plane vector bundles is the classifying space associated to the orthogonal group $O(n)$, we will denote it as usual by $BO(n)$. In fact, in almost all our considerations we just need to consider $G_n (\mathbb R^{n+k})$ for a sufficiently large $k$ and at all effects treat $BO(n)$ as some fixed compact manifold $G_n(\mathbb{R}^{n+k})$ for a suitably large $k$. 

Let $\xi$ be an $n$-plane bundle with a Euclidean metric and $A\subset E(\xi)$ the subset of the total space consisting of all vectors $v$ with $|v|\geq 1$. Recall that the identification space $E(\xi)/A$ is called the \emph{Thom space} $T(\xi)$ of $\xi$. Note that $T(\xi)$ has a preferred base point, denoted by $\infty$, and the complement $T(\xi) \setminus \{\infty\}$ consists of all vectors $v \in E(\xi)$ with $|v|<1.$ We note that if the base space $B$ of $\xi$ is a (finite) $CW$-complex, then the Thom space $T(\xi)$ is an $n-1$-connected (finite) $CW$-complex. If $\xi$ is a smooth $n$-plane bundle, then the base space $B$ of $\xi$ can be smoothly embedded as the zero-cross section in the total space $E(\xi)$, and hence in the Thom space $T(\xi)$; moreover we note that while $T(\xi)$ is not a manifold in general, the complement of the base point $T(\xi)\setminus \{\infty\}$ is a smooth manifold.

Let $R$ be a commutative ring with unity and $\xi$ an $n$-plane bundle $E \xrightarrow{\pi} B$. For a point $b \in B$, let $S_b^n$ be the one-point compactification of the fiber $\pi^{-1}(b)$; since $S_b^n$ is the Thom space of $\xi_{|b}$, we have the canonical inclusion map $i_b: S_b^n \rightarrow T (\xi)\,.$ An \emph{$R$-orientation}, or a \emph{Thom class}, of $\xi$ is defined to be an element of the reduced $n$-th cohomology group $u \in \widetilde{H}^n(T (\xi) , R)$ such that, for every point $b \in B$, $i_b^*(u)$ is a generator of the free $R$-module $\widetilde{H}^n(S_b^n)$. We recall now a fundamental theorem, \textit{cfr.} \cite[Theorem 15.51]{Switzer}.

\begin{thm}[Thom isomorphism theorem]
    Let $u \in \widetilde{H}^n(T (\xi), R)$ be a Thom class for an $n$-plane bundle $\xi$ of the form $E\xrightarrow{\pi} B$. Define $$ \Phi: H^i(B , R) \rightarrow \widetilde{H}^{n+i}(T (\xi), R)$$ by the cup product $\Phi(x)=\pi^*(x) \smile u$. Then $\Phi$ is an isomorphism for every integer $i$.
\end{thm}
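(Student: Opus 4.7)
The plan is to prove the Thom isomorphism by reducing to a local computation over trivializing patches of the base and then globalizing via a Mayer--Vietoris induction. First, I would replace $\widetilde{H}^{n+i}(T(\xi),R)$ with the relative cohomology $H^{n+i}(D(\xi),S(\xi);R)$ of the unit disk/sphere bundle pair, since (for reasonable $B$, e.g.\ a CW complex) the quotient $D(\xi)/S(\xi)$ is homotopy equivalent to $T(\xi)$. Under this identification, the zero section gives a homotopy equivalence $B\simeq D(\xi)$, the projection $\pi$ becomes the obvious deformation retraction, and the Thom class is an element $u \in H^n(D(\xi),S(\xi);R)$ restricting on each fiber pair $(\pi^{-1}(b),\pi^{-1}(b)\setminus\{0\})$ to a chosen $R$-module generator. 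The map $\Phi(x)=\pi^*(x)\smile u$ then lands in relative cohomology by the usual $H^i(X)\otimes H^n(X,A)\to H^{n+i}(X,A)$ cup product pairing.

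The local case. For $U\subset B$ open over which $\xi$ trivializes, the pair $(D(\xi|_U),S(\xi|_U))$ is homeomorphic to $(U\times D^n, U\times S^{n-1})$. Here one computes directly: the relative K\"unneth formula, or equivalently the suspension isomorphism combined with the fact that $H^n(D^n,S^{n-1};R)\cong R$ is generated by a choice of orientation class $u_0$, yields that cup product with $u_0$ is an isomorphism
\[
H^i(U;R) \xrightarrow{\;\cong\;} H^{n+i}(U\times D^n, U\times S^{n-1};R).
\]
Since the Thom class $u$ restricts on each fiber to a generator, one checks that $u|_U$ differs from $u_0$ by a unit in $H^0(U;R)$ (in fact equals it on each component by the generator condition), so $\Phi$ is an isomorphism over any trivializing open $U$.

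Globalization. For the induction, suppose $\Phi$ is an isomorphism for $\xi|_U$, $\xi|_V$, and $\xi|_{U\cap V}$. The inclusions of pairs induce Mayer--Vietoris long exact sequences for both $H^*(\,\cdot\,;R)$ applied to the corresponding base open sets, and for $H^*(D(\xi|_\cdot),S(\xi|_\cdot);R)$ applied to the corresponding disk/sphere bundle pairs. Naturality of the cup product shows that $\Phi$ intertwines the two sequences, and the five-lemma yields that $\Phi$ is an isomorphism for $\xi|_{U\cup V}$. For the applications in this paper it suffices to treat $B=G_n(\mathbb{R}^{n+k})$, a finite CW complex admitting a finite trivializing cover, so iterating the five-lemma step finitely many times concludes the proof. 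For a general paracompact base, one finishes by a direct limit argument over finite subcovers, using the compact support behavior of \v{C}ech cohomology, or by a standard CW-skeletal induction.

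The main obstacle will be the bookkeeping needed to ensure that the Thom class behaves correctly under restriction: one must verify that the \emph{same} class $u$, restricted to each $U_\alpha$ via functoriality of $H^*$, continues to satisfy the fiberwise generator condition so that the local step applies uniformly. Once this compatibility is established, the five-lemma mechanism is essentially formal. A secondary point worth handling carefully is the comparison between $\widetilde{H}^*(T(\xi))$ and $H^*(D(\xi),S(\xi))$ when $B$ is not assumed to have any CW structure; for the present paper this is harmless since all base spaces that appear (finite Grassmannians, triangulated manifolds) carry natural CW structures making the identification immediate.
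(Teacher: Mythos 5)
The paper does not actually prove this statement: it is quoted as a classical result with a citation to Switzer's \emph{Algebraic Topology -- Homology and Homotopy} (Theorem 15.51), so there is no in-paper argument to compare against. Your proof is a correct and entirely standard route to the Thom isomorphism: replace $\widetilde{H}^*(T(\xi);R)$ by $H^*(D(\xi),S(\xi);R)$, compute locally over a trivializing open set, and globalize via Mayer--Vietoris and the five lemma, with the paracompact case handled by a limit or skeletal argument. This is essentially the argument in Milnor--Stasheff and in Hatcher's vector bundle notes, and it certainly suffices for every base space appearing in this paper (which, as the paper itself remarks, may be taken to be a finite Grassmannian $G_n(\mathbb{R}^{n+k})$ for $k$ large).

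Two small points worth tightening. First, in the local step the relative K\"unneth formula over an arbitrary commutative ring $R$ carries Tor terms unless one imposes flatness hypotheses; the cleaner phrasing, which you already allude to, is to observe that $(U\times D^n,\,U\times S^{n-1})$ is the $n$-fold relative suspension of $(U,\varnothing)$, so the suspension isomorphism directly gives $H^{n+i}(U\times D^n, U\times S^{n-1};R)\cong H^i(U;R)$ with no coefficient assumptions, and then one identifies the restricted Thom class with a unit multiple of the suspension generator componentwise. Second, the final paragraph's appeal to ``compact support behavior of \v{C}ech cohomology'' for a general paracompact base is not quite the right tool for ordinary singular cohomology; the standard device there is a Zorn's-lemma/maximal-open-set argument or a Milnor $\varprojlim^1$ exact sequence over the skeletal filtration. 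Neither issue affects the finite-CW case needed here.
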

\noindent
We remark that for every $n$-plane bundle there exists a unique Thom class with $\mathbb{Z}_2$ coefficients, \textit{cfr.} \cite[Theorems 8.1]{MilnorStasheff}. Thom's fundamental result about realizability of cycles by means of submanifolds can be stated as follows, \textit{cfr.} \cite[Théorème II.1]{Thom54}.

\begin{thm}\label{t:Thomclosed}
    Given $\mathcal{M}$ and $\tau$ as in Assumption \ref{a:1}, a homology class $\tau \in H_{m}(\mathcal{M}, \mathbb{Z}_2)$ is representable by a $m$-dimensional smooth submanifold $\Sigma \subset \mathcal{M}$ of codimension $n$ if and only if there exists a map $f: \mathcal{M} \rightarrow T(\gamma^n)$ which pulls back the Thom class $u \in H^n(T(\gamma^n), \mathbb{Z}_2)$ to the Poincaré dual of $\tau$. 
\end{thm}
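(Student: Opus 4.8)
The plan is to prove Theorem~\ref{t:Thomclosed} via the classical Pontryagin--Thom correspondence, adapted to the $\mathbb{Z}_2$-setting, following Thom's original argument. First I would establish the ``only if'' direction. Suppose $\tau$ is represented by a smooth closed embedded submanifold $\Sigma\subset\mathcal{M}$ of codimension $n$. Pick a tubular neighborhood $N$ of $\Sigma$ in $\mathcal{M}$, which is diffeomorphic to the total space of the normal bundle $\nu_\Sigma$. Since $\nu_\Sigma$ is an $n$-plane bundle, it is classified by a map $\Sigma\to BO(n)=G_n$ covered by a bundle map $\nu_\Sigma\to\gamma^n$; this bundle map induces a map on Thom spaces $T(\nu_\Sigma)\to T(\gamma^n)$. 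Now collapse the complement of $N$: the quotient $\mathcal{M}/(\mathcal{M}\setminus \mathring N)$ is homeomorphic to the Thom space $T(\nu_\Sigma)$ (one-point compactifying each normal fiber). Composing the collapse map $\mathcal{M}\to \mathcal{M}/(\mathcal{M}\setminus\mathring N)\cong T(\nu_\Sigma)$ with $T(\nu_\Sigma)\to T(\gamma^n)$ yields the desired map $f:\mathcal{M}\to T(\gamma^n)$. One then checks, using the Thom isomorphism theorem and naturality, that $f^*u$ is the Thom class of $\nu_\Sigma$ pushed into $H^n(\mathcal{M},\mathbb{Z}_2)$, which is by construction the cohomology class dual to $[\Sigma]=\tau$; here the uniqueness of the $\mathbb{Z}_2$ Thom class (from \cite[Theorem 8.1]{MilnorStasheff}) makes the orientation bookkeeping automatic, which is precisely the simplification over the integral case.

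For the ``if'' direction, suppose $f:\mathcal{M}\to T(\gamma^n)$ pulls back $u$ to the Poincar\'e dual of $\tau$. The strategy is to make $f$ transverse to the zero section $B=G_n\subset T(\gamma^n)$ and set $\Sigma:=f^{-1}(B)$. There are two technical points to address. First, $T(\gamma^n)$ is not a manifold (it has the singular basepoint $\infty$), but $T(\gamma^n)\setminus\{\infty\}$ is a smooth manifold containing $G_n$ as the zero section, so one homotopes $f$ to be smooth near $f^{-1}(B)$ and transverse to $B$ there; the preimage $\Sigma=f^{-1}(B)$ is then a smooth closed submanifold of $\mathcal{M}$ of codimension $n$ (its normal bundle being $f^*\gamma^n$). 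Second, since $G_n$ is infinite-dimensional one works with the approximation $G_n(\mathbb{R}^{n+k})$ for $k$ large, as the excerpt explicitly allows, so that transversality and the tubular neighborhood theorem apply to the compact manifold $G_n(\mathbb{R}^{n+k})$; by cellular approximation the image of $f$ (on the compact $\mathcal{M}$) lands in a finite subcomplex anyway. Finally, one verifies $[\Sigma]=\tau$: by the naturality of the Thom class under the transverse preimage, the Poincar\'e dual of $[\Sigma]$ in $H^n(\mathcal{M},\mathbb{Z}_2)$ equals $f^*u$, which by hypothesis is the Poincar\'e dual of $\tau$; since Poincar\'e duality $H_m(\mathcal{M},\mathbb{Z}_2)\cong H^n(\mathcal{M},\mathbb{Z}_2)$ is an isomorphism (valid with $\mathbb{Z}_2$ coefficients with no orientability hypothesis on $\mathcal{M}$, which is exactly why the non-orientable case goes through unchanged), we conclude $[\Sigma]=\tau$.

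The main obstacle, and the step requiring the most care, is the transversality argument in the ``if'' direction: one must homotope $f$ to a map that is smooth and transverse to the zero section near the preimage of $G_n$, while staying away from the non-smooth point $\infty\in T(\gamma^n)$, and one must ensure the homotopy does not alter the pullback of the Thom class (it does not, since $f^*u$ is a homotopy invariant). This is a standard but delicate piece of differential topology; since the paper already cites \cite[Th\'eor\`eme II.1]{Thom54} for the statement, I would invoke Thom's original proof for this step rather than reprove it, and confine the argument here to explaining the $\mathbb{Z}_2$/non-orientable adaptations, namely that Poincar\'e duality and the Thom isomorphism hold with $\mathbb{Z}_2$ coefficients for arbitrary closed manifolds and that the $\mathbb{Z}_2$ Thom class is unique, so that no choice of orientation of $\mathcal{M}$, of $\Sigma$, or of the normal bundle is ever needed.
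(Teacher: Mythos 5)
Your proposal is a correct reconstruction of the classical Pontryagin--Thom correspondence in the $\mathbb{Z}_2$ setting, which is exactly the argument of Thom's Th\'eor\`eme II.1; the paper itself does not supply a proof of Theorem~\ref{t:Thomclosed} but simply cites \cite[Th\'eor\`eme II.1]{Thom54}, so your argument matches the approach the paper relies on. The two simplifications you highlight relative to the oriented case---uniqueness of the $\mathbb{Z}_2$ Thom class and the validity of $\mathbb{Z}_2$ Poincar\'e duality without orientability---are precisely the points that make the mod~2 version go through without change, and the care you take with transversality near the zero section (using $G_n(\mathbb{R}^{n+k})$ for large $k$, staying away from $\infty$, invoking homotopy invariance of $f^*u$) is the right way to handle the standard technical issues; for the compact-with-boundary analog, Theorem~\ref{t:Thomboundary}, the paper points the reader to \cite[Theorem 2.6]{ABCD} for exactly this sort of detailed argument.
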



By suitably modifying the proof of \cite[Théorème II.1]{Thom54}, it is possible to derive the analog of Theorem \ref{t:Thomclosed} for compact manifolds with boundary: for a proof we refer to \cite[Theorem 2.6]{ABCD}, which holds almost \emph{verbatim}, just replacing integral homology with homology mod 2, and $T(\widetilde{\gamma}^n), BSO(n)$ with $T(\gamma^n), BO(n)$ respectively.

\begin{thm}\label{t:Thomboundary}
Let $\mathcal{M}$ be a connected smooth compact $m+n$-dimensional Riemannian manifold with boundary $\partial \mathcal{M}$ and $\tau$ a relative homology class $\tau \in H_m(\mathcal{M},\partial \mathcal{M}, \mathbb{Z}_2)$. Then $\tau$ is representable\footnote{With the clear modifications in Definition \ref{d:smoothrepresentability} for $\mathcal{M},\Sigma$ with boundary and $\tau$ a relative mod 2 homology class.} by a smooth compact embedded submanifold manifold $\Sigma \subset \mathcal{M}$ with $\partial \Sigma = \Sigma\cap \partial \mathcal{M}$ if and only if there exists a map $f: \mathcal{M} \rightarrow T(\gamma^n)$ which pulls back the Thom class $u \in H^n(T(\gamma^n), \mathbb{Z}_2)$ to the relative Poincaré dual of $\tau$. 
\end{thm}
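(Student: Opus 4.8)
The plan is to adapt the proof of Thom's realizability theorem (Theorem \ref{t:Thomclosed}) to the relative setting, following \cite[Théorème II.1]{Thom54} and \cite[Theorem 2.6]{ABCD}, replacing integral homology and the oriented objects $T(\widetilde{\gamma}^n), BSO(n)$ throughout by their mod 2 analogues $T(\gamma^n), BO(n)$; since every $n$-plane bundle carries a unique $\mathbb{Z}_2$ Thom class, the orientability hypotheses that are needed in the integral case simply disappear here. Concretely, one direction is essentially formal: given a smooth compact embedded $\Sigma \subset \mathcal{M}$ with $\partial\Sigma = \Sigma \cap \partial\mathcal{M}$ representing $\tau$, take a tubular neighborhood $N$ of $\Sigma$ in $\mathcal{M}$ that meets $\partial\mathcal{M}$ normally, so that $N$ is diffeomorphic to (the disk bundle of) the normal bundle $\nu$ of $\Sigma$. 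The normal bundle, being an $n$-plane bundle over $\Sigma$, is classified by a map $\Sigma \to BO(n)$ covered by a bundle map $\nu \to \gamma^n$; this induces a map of Thom spaces $T(\nu) \to T(\gamma^n)$. Collapsing the complement of $N$ to the basepoint gives the Pontryagin--Thom collapse $c: \mathcal{M}/\overline{\mathcal{M}\setminus N} \to T(\nu)$, and precomposing with the quotient $\mathcal{M} \to \mathcal{M}/\overline{\mathcal{M}\setminus N}$ yields $f:\mathcal{M}\to T(\gamma^n)$. One then checks, using the Thom isomorphism and naturality of the cup product together with the fact that the relative Poincaré dual of $\tau = [\Sigma]$ is the Thom class of $\nu$ pushed into $H^n(\mathcal{M},\partial\mathcal{M};\mathbb{Z}_2)$, that $f^*u$ is the relative Poincaré dual of $\tau$; the collapse being defined on all of $\mathcal{M}$ (not just the interior) is exactly what makes $f$ land in the relative cohomology group.

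The converse is the substantive direction. Given $f:\mathcal{M}\to T(\gamma^n)$ with $f^*u$ equal to the relative Poincaré dual of $\tau$, first homotope $f$ so that it is smooth away from the basepoint $\infty$ and transverse to the zero section $BO(n) \subset T(\gamma^n)\setminus\{\infty\}$; this uses that $T(\gamma^n)\setminus\{\infty\}$ is a genuine smooth manifold and a relative transversality/Thom-jet argument. Here one must be careful near $\partial\mathcal{M}$: one arranges the homotopy so that $f|_{\partial\mathcal{M}}$ is also transverse to the zero section and so that $f$ is transverse ``with corners'' along the boundary, which is possible because $BO(n)$ may be replaced by a large compact finite-dimensional Grassmannian $G_n(\mathbb{R}^{n+k})$ (so all the spaces in play are finite-dimensional manifolds) as remarked in the excerpt. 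Then $\Sigma := f^{-1}(BO(n))$ is a smooth compact submanifold of $\mathcal{M}$ of codimension $n$, with $\partial\Sigma = \Sigma\cap\partial\mathcal{M}$ by the boundary transversality, and its normal bundle is $f^*\gamma^n$. Finally one verifies $[\Sigma]=\tau$ in $H_m(\mathcal{M},\partial\mathcal{M};\mathbb{Z}_2)$: by construction the relative Poincaré dual of $[\Sigma]$ is represented by the Thom class of the normal bundle of $\Sigma$, which is $f^*u$, and this equals the relative Poincaré dual of $\tau$ by hypothesis; since relative Poincaré duality $H_m(\mathcal{M},\partial\mathcal{M};\mathbb{Z}_2)\cong H^n(\mathcal{M};\mathbb{Z}_2)$ (valid with $\mathbb{Z}_2$ coefficients for any, possibly non-orientable, compact manifold with boundary) is an isomorphism, we conclude $[\Sigma]=\tau$.

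The main obstacle is the boundary bookkeeping in the converse: ensuring that the transversality homotopy of $f$ can be performed relative to $\partial\mathcal{M}$ so that the resulting $\Sigma$ is a \emph{neat} submanifold (meeting $\partial\mathcal{M}$ transversally in $\partial\Sigma$, with no spurious closed components in the interior and no tangencies along the boundary), and that the identification of relative Poincaré duals is compatible with the long exact sequences of the pairs involved. All of this is handled in \cite[Theorem 2.6]{ABCD} in the oriented case, and the point of this statement is precisely that the argument there transfers word for word once one drops orientations, uses $\mathbb{Z}_2$ coefficients everywhere, invokes the existence and uniqueness of the $\mathbb{Z}_2$ Thom class (\cite[Theorem 8.1]{MilnorStasheff}), and replaces $T(\widetilde\gamma^n)$ and $BSO(n)$ by $T(\gamma^n)$ and $BO(n)$. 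Hence the proof reduces to citing \cite[Theorem 2.6]{ABCD} and indicating these substitutions, together with the observation that no step in that proof used orientability in an essential way.
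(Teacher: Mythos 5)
Your proposal matches the paper's approach exactly: the paper itself proves this theorem by citing \cite[Theorem 2.6]{ABCD} and noting that the oriented argument carries over \emph{verbatim} after replacing integral coefficients, $T(\widetilde\gamma^n)$, and $BSO(n)$ by their mod 2 counterparts $\mathbb{Z}_2$, $T(\gamma^n)$, and $BO(n)$. Your additional sketch of the two directions (Pontryagin--Thom collapse and boundary transversality) is a correct elaboration of what that cited proof does, so there is no discrepancy.
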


\section{Regular neighborhoods and maps}\label{s:smoothingneighborhood}

\subsection{Neighborhoods of skeleta}\label{s:subsmoothingneighborhood}

We list here a few technical tools introduced in \cite{ABCD} to deal with neighborhoods of skeleta of smooth triangulations and we recall the definition of some useful maps. We also refer to \cite{Hirschsmoothrn} for the theory of smooth regular neighborhoods.

For a fixed smooth triangulation $\mathcal{K}$ of $\mathcal{M}$ and a skeleton $\mathcal{K}^j$, we denote by $B_\delta (\mathcal{K}^j)$ the metric neighborhoods of the skeleton, \textit{i.e.} the sets of points $p$ with $\dist (p, \mathcal{K}^j) < \delta$. Fix a (sufficiently large) constant $C_0$ which will depend on the triangulation $\mathcal{K}$, subdivide the simplices forming $\mathcal{K}^j$ into $\mathcal{S}_0\cup \ldots \cup \mathcal{S}_j$ according to their dimension ($\mathcal{S}_i$ being the collection of simplices of dimension $i$) and set
\begin{equation}\label{e:intorni-spigolosi}
V_\delta (\mathcal{K}^j) := \bigcup_{i=0}^j \bigcup_{\sigma \in \mathcal{S}_i} B_{C_0^{-i} \delta} (\sigma)\, 
\end{equation}
where 
\[
B_{C_0^{-i} \delta} (\sigma) = \{p: \dist (p, \sigma) < C_0^{-i} \delta\}\, .
\]
As an elementary consequence of the definition, one obtains the following.

\begin{lemma}\label{l:spigoli}
For every smooth triangulation $\mathcal{K}$ of $\mathcal{M}$ and every $j\leq m+n-1$ there is a choice of $\bar \delta>0$ (sufficiently small) and of $C_0$ (sufficiently large) such that $\mathcal{M}\setminus V_{\bar\delta} (\mathcal{K}^j)$ is a deformation retract of $\mathcal{M}\setminus \mathcal{K}^j$ and such that for every point $p\in \partial V_{\bar\delta} (\mathcal{K}^j)$ there is at most one $\sigma$ in each $\mathcal{S}_i$ (with $0\leq i \leq j$) for which $p\in \partial B_{C_0^{-i} \bar\delta} (\sigma)$.

\noindent
In particular, there is a neighborhood of $U$ of $p$, an integer $\bar j\in \{1, \ldots, j\}$ and a diffeomorphism $\phi: U \to B_1 \subset \mathbb R^{m+n}$ (the unit ball in $\mathbb R^{m+n}$) such that 
\[
\phi (U\setminus V_{\bar\delta} (\mathcal{K}^j)) = \{(x_1, \ldots , x_{m+n}) : x_i>0 \mbox{ for $1\leq i \leq \bar{j}$}\}\, .
\]
\end{lemma}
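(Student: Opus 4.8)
The plan is to reduce everything to the local structure of $V_{\bar\delta}(\mathcal{K}^j)$ near one of its boundary points. Fix a large constant $C_0$, to be chosen in terms of $\mathcal{K}$, and for a simplex $\sigma$ of $\mathcal{K}$ of dimension $i\le j$ write $r_\sigma:=C_0^{-i}\bar\delta$ and $g_\sigma:=\dist(\cdot,\overline\sigma)-r_\sigma$, so that $V_{\bar\delta}(\mathcal{K}^j)=\{g_\sigma<0\text{ for some }\sigma\}$ is open and $p\in\partial V_{\bar\delta}(\mathcal{K}^j)$ precisely when $g_\sigma(p)\ge 0$ for all $\sigma$, with equality for at least one; I call such a $\sigma$ \emph{active at $p$}. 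All the work lies in understanding the active simplices. I would work in a chart around $p$ in which the triangulation is affine up to a bi-Lipschitz distortion that is bounded uniformly over $\mathcal{M}$ by compactness, together with two further uniform constants: a lower bound $\theta_0=\theta_0(\mathcal{K})>0$ for the angle at which any two incident simplices meet along a common face, and a lower bound for the distance between any two disjoint closed simplices. Having absorbed these into $C_0$ — i.e.\ taking $C_0$ larger than a fixed function of them — I would finally choose $\bar\delta$ small.

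Claims (b) and (c) then follow from three observations, each using that $C_0$ is large. \emph{First}, for any $\sigma$ active at $p$ (indeed any $\sigma$ close to $p$) the nearest point $q^*$ of $\overline\sigma$ to $p$ lies in the \emph{relative interior} of $\sigma$: if it lay in a proper face $\rho\prec\sigma$ we would have $\dist(p,\overline\rho)=|p-q^*|=r_\sigma<r_\rho$, hence $p\in\{g_\rho<0\}\subset V_{\bar\delta}(\mathcal{K}^j)$, contradicting $p\in\partial V_{\bar\delta}(\mathcal{K}^j)$. Consequently $g_\sigma$ agrees near $p$ with the (smooth) distance to the smooth submanifold carrying $\sigma$, so $g_\sigma$ is smooth near $p$ with $\nabla g_\sigma(p)=(p-q^*)/|p-q^*|$, a unit vector normal to $\sigma$ at $q^*$. \emph{Second}, any two distinct active simplices $\sigma,\sigma'$ are \emph{comparable}, one a face of the other: otherwise $\overline\sigma\cap\overline{\sigma'}$ is a common face $\rho$ with $\dim\rho<\min(\dim\sigma,\dim\sigma')$, and since the realizers $q^*_\sigma,q^*_{\sigma'}$ lie in the disjoint open cells of $\sigma,\sigma'$, which meet $\overline\rho$ at angle at least $\theta_0$, one gets $|q^*_\sigma-q^*_{\sigma'}|\ge c\,r_\rho$ for a fixed $c=c(\theta_0)>0$, whereas the triangle inequality through $p$ gives $|q^*_\sigma-q^*_{\sigma'}|\le r_\sigma+r_{\sigma'}$; since $r_\rho\ge C_0\max(r_\sigma,r_{\sigma'})$, this is impossible once $C_0>2/c$. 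In particular at most one simplex of each dimension is active at $p$ — this is claim (b) — and the active simplices form a chain $\sigma^{(1)}\prec\cdots\prec\sigma^{(\bar j)}$ of distinct dimensions.

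\emph{Third}, the differentials $\nabla g_{\sigma^{(1)}}(p),\dots,\nabla g_{\sigma^{(\bar j)}}(p)$ are linearly independent. In the affine model this is a short orthogonality computation: set $w_\ell:=p-q^*_{\sigma^{(\ell)}}$; since $w_\ell$ is normal to $\mathrm{aff}(\sigma^{(\ell)})$ and $\mathrm{aff}(\sigma^{(1)})\subset\cdots\subset\mathrm{aff}(\sigma^{(\bar j)})$, one gets $w_\ell-w_{\ell+1}=q^*_{\sigma^{(\ell+1)}}-q^*_{\sigma^{(\ell)}}$, which is orthogonal to $\mathrm{aff}(\sigma^{(\ell)})$ and, because the realizer $q^*_{\sigma^{(\ell+1)}}$ lies off the affine span of its proper face $\sigma^{(\ell)}$, has a nonzero component orthogonal to $\mathrm{aff}(\sigma^{(\ell)})$ inside $\mathrm{aff}(\sigma^{(\ell+1)})$. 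Orthogonal bookkeeping then expresses $w_1,\dots,w_{\bar j}$ in triangular form with respect to a set of mutually orthogonal nonzero vectors, hence they are independent. Shrinking $U$ so that the inactive simplices stay out, $U\setminus V_{\bar\delta}(\mathcal{K}^j)=\{q\in U:g_{\sigma^{(\ell)}}(q)\ge 0,\ \ell=1,\dots,\bar j\}$, and straightening $\bar j$ functions with independent differentials (implicit function theorem) yields the asserted diffeomorphism $\phi:U\to B_1$ carrying this set to $\{x_i>0,\ 1\le i\le\bar j\}$, with $\bar j$ the length of the active chain.

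For claim (a) I would use the Lipschitz function $\Psi:=\min_{\dim\sigma\le j}C_0^{\dim\sigma}\dist(\cdot,\overline\sigma)$, which vanishes exactly on $\mathcal{K}^j$ and satisfies $V_\delta(\mathcal{K}^j)=\{\Psi<\delta\}$; by the second and third observations it is a minimum of smooth functions whose active members have linearly independent differentials on $\{0<\Psi<\bar\delta\}$, so it admits a smooth outward pseudo-gradient there, and the corresponding flow deformation-retracts $\mathcal{M}\setminus\mathcal{K}^j=\{\Psi>0\}$ onto $\{\Psi\ge\bar\delta\}=\mathcal{M}\setminus V_{\bar\delta}(\mathcal{K}^j)$. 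Equivalently one may invoke the theory of smooth regular neighborhoods, \emph{cfr.}\ \cite{Hirschsmoothrn}: for $C_0$ large and $\bar\delta$ small $\overline{V_{\bar\delta}(\mathcal{K}^j)}$ is a smooth regular neighborhood of the polyhedron $\mathcal{K}^j$, hence collapses onto it and has complement a deformation retract of $\mathcal{M}\setminus\mathcal{K}^j$. The one real obstacle throughout is the passage from the affine heuristics — where distances are smooth off the faces and the angles between incident simplices are literal constants — to a general smooth triangulation, together with the fact that $\Psi$ is merely Lipschitz; both are handled by working in charts and using that the relevant geometric constants are uniform by compactness, which is exactly why $C_0$ is allowed to depend on $\mathcal{K}$.
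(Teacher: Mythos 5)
The paper does not actually prove this lemma: it is imported from \cite{ABCD} with the remark that it is ``an elementary consequence of the definition,'' so there is no proof here to compare against. On its own merits, your reconstruction has the right architecture — identify the active simplices, show they form a chain, compute the gradients of the active distance functions, use linear independence to straighten the corner, and get the deformation retraction either from a pseudo-gradient flow for $\Psi$ or from Hirsch's regular-neighborhood theory. Your third observation (the triangular expression of $w_1,\dots,w_{\bar j}$ in mutually orthogonal nonzero vectors) is correct and is the essential content of the ``in particular'' clause, and your first observation correctly uses the scaling $r_\rho > r_\sigma$ for $\rho\prec\sigma$ to push the realizer into the relative interior.

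There is, however, one genuine gap, in the second observation. You assert that since $q^*_\sigma$ and $q^*_{\sigma'}$ lie in the disjoint open cells of two non-comparable simplices that meet $\overline\rho$ at angle at least $\theta_0$, one gets $|q^*_\sigma-q^*_{\sigma'}|\ge c(\theta_0)\,r_\rho$. As stated this is false: two points, one in each open cell, can both lie arbitrarily close to $\overline\rho$ and hence to each other, so the angle bound alone yields no lower bound of size $r_\rho$. What is missing is the quantitative companion to your first observation: because $p\in\partial V_{\bar\delta}(\mathcal{K}^j)$ and $\rho$ is a simplex of dimension $\le j$, we have $p\notin B_{r_\rho}(\rho)$, i.e.\ $\dist(p,\overline\rho)\ge r_\rho$, and therefore $\dist(q^*_\sigma,\overline\rho)\ge r_\rho-r_\sigma\ge(1-C_0^{-1})\,r_\rho$ and likewise for $q^*_{\sigma'}$. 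Only with this lower bound in hand does the angle estimate deliver $|q^*_\sigma-q^*_{\sigma'}|\gtrsim r_\rho$, which then contradicts the triangle-inequality upper bound $\le r_\sigma+r_{\sigma'}\le 2r_\rho/C_0$ once $C_0$ is large. This step is not a cosmetic omission — it is precisely where the ``staircase'' scaling $C_0^{-i}\bar\delta$ of \eqref{e:intorni-spigolosi} does its work, forcing the active constraints to decouple near $\partial V_{\bar\delta}$. The same quantitative distance is also needed to justify your appeal to smoothness of $\dist(\cdot,\overline\sigma)$ near $p$ (so that the realizer is well inside the focal radius of the open face), so it is worth stating explicitly. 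Finally, a small remark on the statement itself: since $U\setminus V_{\bar\delta}(\mathcal{K}^j)$ is relatively closed in $U$ and contains $p$, the right-hand side of the displayed identity should read $\{x_i\ge 0\}$; your computation with $g_{\sigma^{(\ell)}}\ge 0$ is the correct reading.
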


\noindent
We also recall the appropriate regularization we need, and a key lemma about the homotopy type of complements of skeleta.

\begin{lemma}\label{l:spigoli-allisciati}
Let $\mathcal{K}$ be a smooth triangulation of $\mathcal{M}$, let $\bar \delta$ and $C_0$ be given by Lemma \ref{l:spigoli} and fix any pair of positive numbers $\delta' <\delta<\bar\delta$. Then there is a neighborhood $U_\delta (\mathcal{K}^j)$ of $\mathcal{K}^j$ with the following properties:
\begin{itemize}
\item $V_\delta (\mathcal{K}^j) \supset U_\delta (\mathcal{K}^j) \supset V_{\delta'} (\mathcal{K}^j)$;
\item The boundary of $U_\delta (\mathcal{K}^j)$ is smooth;
\item $\mathcal{M}\setminus U_\delta (\mathcal{K}^j)$ is a deformation retract of $\mathcal{M}\setminus V_{\delta'} (\mathcal{K}^j)$;
\item There is a smooth tubular neighborhood $\mathcal{C}$ of $\partial U_\delta (\mathcal{K}^j)$ in $\mathcal{M}$ containing $\partial V_{\delta'} (\mathcal{K}^j)$. 
\end{itemize}
\end{lemma}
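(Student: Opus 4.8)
Since the statement is purely differential-topological, the plan is to ignore the current-theoretic content and prove it as a \emph{corner-smoothing} statement about the metric regular neighbourhoods $V_s(\mathcal K^j)$, in the spirit of \cite{Hirschsmoothrn}; no feature of the coefficient group enters, which is why the construction of \cite{ABCD} transfers verbatim. The starting observation is that the whole nested family $\{V_s(\mathcal K^j)\}_{0<s\le\bar\delta}$ consists of the sublevel sets of one piecewise-smooth function: setting
\[
\psi(p):=\min_{0\le i\le j}\ \min_{\sigma\in\mathcal S_i}\ C_0^{\,i}\,\dist(p,\sigma),
\]
one has $V_s(\mathcal K^j)=\{\psi<s\}$ for every such $s$. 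First I would fix an auxiliary radius $\delta_0\in(0,\delta')$, small enough for the estimates below, and restrict attention to the compact collar $A:=\{\delta_0\le\psi\le\delta\}$, which is disjoint from $\mathcal K^j$ by Lemma~\ref{l:spigoli}. By that lemma together with the description of $\psi$ above, around every point of $A$ there is a chart in which $\psi$ becomes, after a smooth change of coordinates, the minimum of $\bar j\le j$ of the coordinate functions --- a \emph{convex corner}; this convexity is what makes the smoothing possible.

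The heart of the argument, and the only step I expect to present real difficulty, is to manufacture globally on a neighbourhood of $A$, out of these local models, two \emph{compatible} smooth objects: (i) a nowhere-vanishing vector field $X$ that is transverse to every smooth face of every level set $\{\psi=s\}$, $s\in[\delta_0,\delta]$, and points toward increasing $\psi$ (in a chart as above one takes $X=\sum_{i\le\bar j}\partial_{x_i}$, corrected for transversality); and (ii) for every small $\eta>0$ a smooth function $\psi_\eta$ with $\|\psi_\eta-\psi\|_\infty\le C\eta$ on $A$ and $X\psi_\eta>0$ on $A$, obtained by replacing each local minimum of coordinate functions with a soft-minimum such as $-\eta\log\sum_{i\le\bar j}e^{-x_i/\eta}$ and patching with a partition of unity. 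Both objects can be glued precisely because of the convex-corner normal form --- the soft-minima are convex, so the transversality correction of (i) can be absorbed into (ii) --- and making (i) and (ii) hold \emph{simultaneously} over the whole collar is where the care is needed; everything afterwards is routine flow-box bookkeeping.

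Granting this, set $\delta_1:=\tfrac12(\delta'+\delta)$, fix $\eta$ so small that $C\eta<\tfrac14\min\{\delta',\delta-\delta'\}$, and define
\[
U_\delta(\mathcal K^j):=V_{\delta'}(\mathcal K^j)\ \cup\ \{\,p\in A:\psi_\eta(p)<\delta_1\,\}.
\]
Since $V_{\delta_1-C\eta}(\mathcal K^j)\subset\{\psi_\eta<\delta_1\}\subset V_{\delta_1+C\eta}(\mathcal K^j)$ and $\delta'<\delta_1-C\eta<\delta_1+C\eta<\delta$, the set $U_\delta(\mathcal K^j)$ is an open neighbourhood of $\mathcal K^j$ with $V_{\delta'}(\mathcal K^j)\subset U_\delta(\mathcal K^j)\subset V_\delta(\mathcal K^j)$, which is the first bullet; and $\delta_1$ is a regular value of $\psi_\eta$ because $X\psi_\eta>0$, so $\partial U_\delta(\mathcal K^j)=\{\psi_\eta=\delta_1\}$ is a smooth closed hypersurface, the second bullet.

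The last two bullets follow from the flow $\Phi^X$ of $X$, along whose orbits $\psi_\eta$ is a strictly monotone coordinate. On the annular region $U_\delta(\mathcal K^j)\setminus V_{\delta'}(\mathcal K^j)=\{\,p:\delta'\le\psi(p)\le\delta,\ \psi_\eta(p)<\delta_1\,\}$ every $X$-orbit reaches $\{\psi_\eta=\delta_1\}$ in finite time without re-entering $V_{\delta'}(\mathcal K^j)$ --- because $X$ points toward increasing $\psi$ and is transverse to the smooth faces of $\partial V_{\delta'}(\mathcal K^j)$ --- so pushing this region onto $\partial U_\delta(\mathcal K^j)$ along $X$ and taking the identity on $\mathcal M\setminus U_\delta(\mathcal K^j)$ produces a deformation retraction of $\mathcal M\setminus V_{\delta'}(\mathcal K^j)$ onto $\mathcal M\setminus U_\delta(\mathcal K^j)$, the third bullet. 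For the fourth, let $\mathcal C:=\{\,p:c_0<\psi_\eta(p)<c_1\,\}$ be a thin slab about the level $\delta_1$, with $c_0$ slightly above $\delta_0$ but below $\delta'-C\eta$ and $c_1$ slightly below $\delta$ but above $\delta_1$ (so that $\mathcal C$ lies in the domain of $X$ and $\{\psi_\eta=\delta_1\}$ meets every $X$-orbit through $\mathcal C$); writing $\tau(p)$ for the time at which the orbit through $p$ meets $\{\psi_\eta=\delta_1\}$, the map $p\mapsto\big(\Phi^X_{\tau(p)}(p),\ \psi_\eta(p)\big)$ is a diffeomorphism of $\mathcal C$ onto $\partial U_\delta(\mathcal K^j)\times(c_0,c_1)$, so $\mathcal C$ is a smooth tubular neighbourhood of $\partial U_\delta(\mathcal K^j)$; and $\mathcal C$ contains $\partial V_{\delta'}(\mathcal K^j)$ because there $\psi_\eta$ takes values in $[\delta'-C\eta,\delta'+C\eta]\subset(c_0,c_1)$.
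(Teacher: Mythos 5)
The paper itself does not include a proof of this lemma: Section~3 imports it, together with Lemma~\ref{l:spigoli}, from \cite{ABCD}, so there is nothing in this paper to compare your argument against line by line. Your reconstruction — viewing the whole nested family $\{V_s(\mathcal{K}^j)\}$ as sublevel sets of the single piecewise-smooth function $\psi$, replacing the hard minimum by a soft-minimum $\psi_\eta$ on the collar $A$ while simultaneously building a vector field $X$ transverse to all the faces and satisfying $X\psi_\eta>0$, and then deducing the four bullets from the flow of $X$ — is the standard way of smoothing such corners and, after the two details you flag are filled in, it is correct.

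Both of the delicate points are real but routine, and you have identified the right mechanism for each. For the first: Lemma~\ref{l:spigoli} gives the coordinate-corner normal form only at the single level $\bar\delta$; you need (and implicitly use) that the same general position of the faces persists at every level $s\in[\delta_0,\delta]$, which does hold because $V_s$ in \eqref{e:intorni-spigolosi} is built with the same $C_0$ for every $s$ and the rescaled balls $B_{C_0^{-i}s}(\sigma)$ are concentric with those at level $\bar\delta$; this should be stated. For the second: the convexity remark you make is exactly what resolves the patching. The cone of vectors transverse to every active face at a point is open and convex, so a partition-of-unity average of local transversal fields remains transversal; and for that fixed $X$, writing $\psi_\eta=\sum_a\rho_a\psi^{(a)}_\eta$ and using $\sum_a X\rho_a=0$ gives
\[
X\psi_\eta \;=\; \sum_a\rho_a\,X\psi^{(a)}_\eta \;+\; \sum_a\bigl(\psi^{(a)}_\eta-\psi\bigr)\,X\rho_a \;\ge\; c \;-\; C\eta\sum_a|X\rho_a| \;>\; 0
\]
for $\eta$ small, which is the claim you left unverified. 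One further calibration worth recording explicitly: the inclusion $V_{\delta_1-C\eta}(\mathcal{K}^j)\subset\{\psi_\eta<\delta_1\}$ should be read as holding only on the domain of $\psi_\eta$, i.e.\ as $V_{\delta_1-C\eta}(\mathcal{K}^j)\cap A\subset\{\psi_\eta<\delta_1\}$, and the openness of your $U_\delta(\mathcal{K}^j)$ together with the identity $\partial U_\delta(\mathcal{K}^j)=\{\psi_\eta=\delta_1\}$ rests on the two inequalities $\delta'+C\eta<\delta_1$ and $\delta_1<\delta-C\eta$, both of which follow from $C\eta<\tfrac14(\delta-\delta')$; this also guarantees $\{\psi_\eta=\delta_1\}$ stays in the interior of $A$, so the boundary computation is genuinely local to the collar. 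With these points spelled out the proof is sound.
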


\begin{lemma}\label{l:co-skeleton}
    Let $\mathcal{M}$ be as in Assumption \ref{a:1}, $\mathcal{K}$ a smooth triangulation of $\mathcal{M}$, $k\in \{0, \ldots , m+n-1\}$ and $U_\delta (\mathcal{K}^k)$ as in Lemma \ref{l:spigoli-allisciati}. Then the complement of $U_\delta(\mathcal{K}^k)$ is homotopy equivalent to a complex of dimension $m+n-k-1$.
\end{lemma}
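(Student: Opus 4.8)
The plan is to reduce the statement to the classical fact that, in a triangulated $d$-manifold, the complement of the $k$-skeleton deformation retracts onto the $(d-k-1)$-dimensional \emph{dual coskeleton}. First I would dispose of the smoothed neighborhood: by Lemma \ref{l:spigoli} (applied with $\delta'$ in place of $\bar\delta$, which is legitimate since $\delta'<\bar\delta$) the set $\mathcal{M}\setminus V_{\delta'}(\mathcal{K}^k)$ is a deformation retract of $\mathcal{M}\setminus\mathcal{K}^k$, while by Lemma \ref{l:spigoli-allisciati} $\mathcal{M}\setminus U_\delta(\mathcal{K}^k)$ is a deformation retract of $\mathcal{M}\setminus V_{\delta'}(\mathcal{K}^k)$; hence $\mathcal{M}\setminus U_\delta(\mathcal{K}^k)$ is homotopy equivalent to $\mathcal{M}\setminus\mathcal{K}^k$, and it suffices to exhibit a deformation retract of the latter which is a complex of dimension $d-k-1$, where $d:=m+n$.

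Next I would pass to the first barycentric subdivision $\mathcal{K}'$ of $\mathcal{K}$ and bring in the dual blocks. Writing $\hat\sigma$ for the barycenter of a simplex $\sigma\in\mathcal{K}$, set
$$D(\sigma):=\bigcup\bigl\{[\hat\sigma_0,\dots,\hat\sigma_r]\in\mathcal{K}' \ :\ \sigma\le\sigma_0<\sigma_1<\cdots<\sigma_r\bigr\}$$
(faces ordered by inclusion), which is a subcomplex of $\mathcal{K}'$ of dimension $d-\dim\sigma$ (here one uses that $\mathcal{K}$ triangulates a $d$-manifold), and let $\mathcal{K}^\ast_{j}:=\bigcup_{\dim\sigma\ge j}D(\sigma)$, a subcomplex of $\mathcal{K}'$ of dimension exactly $d-j$. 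I would then show that $\mathcal{M}\setminus\mathcal{K}^k$ deformation retracts onto $\mathcal{K}^\ast_{k+1}$, which has dimension $d-k-1=m+n-k-1$. Concretely, every $x\in\mathcal{M}$ lies in the interior of a unique simplex $[\hat\sigma_0,\dots,\hat\sigma_r]$ of $\mathcal{K}'$, with $\sigma_0<\cdots<\sigma_r$ in $\mathcal{K}$, and $x=\sum_i t_i\hat\sigma_i$ with all $t_i>0$; one checks that $x\in\mathcal{K}^k$ exactly when $\dim\sigma_r\le k$, so that $x\notin\mathcal{K}^k$ forces $i_0:=\min\{i:\dim\sigma_i\ge k+1\}$ to be well defined. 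The deformation that, for $s\in[0,1]$, scales the weights $t_0,\dots,t_{i_0-1}$ by $(1-s)$ and then renormalizes keeps $x$ in the open simplex $[\hat\sigma_0,\dots,\hat\sigma_r]$ for $s<1$ — hence off $\mathcal{K}^k$ — and at $s=1$ lands in the interior of $[\hat\sigma_{i_0},\dots,\hat\sigma_r]\subset D(\sigma_{i_0})\subset\mathcal{K}^\ast_{k+1}$ (still off $\mathcal{K}^k$, since $\dim\sigma_{i_0}\ge k+1$).

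The one point that needs care, and which I expect to be the only real work, is that this piecewise-affine recipe glues to a globally continuous deformation retraction across the simplices of $\mathcal{K}'$: this holds because the partition of the barycenters $\hat\sigma_i$ occurring in a given point into those with $\dim\sigma_i\le k$ and those with $\dim\sigma_i\ge k+1$ is intrinsic to $\mathcal{K}$, so the formula is compatible on common faces — the standard bookkeeping for retractions onto dual skeleta. Alternatively, one may argue intrinsically with smooth regular neighborhoods in the sense of \cite{Hirschsmoothrn}: a regular neighborhood $N$ of $\mathcal{K}^k$ in $\mathcal{M}$ has the property that $\mathcal{M}\setminus\mathrm{int}\,N$ is a regular neighborhood of $\mathcal{K}^\ast_{k+1}$, and hence deformation retracts onto it. Either route yields that $\mathcal{M}\setminus U_\delta(\mathcal{K}^k)\simeq\mathcal{K}^\ast_{k+1}$, a complex of dimension $m+n-k-1$; a proof along these lines is also carried out in \cite{ABCD}.
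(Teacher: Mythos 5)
Your argument is correct and is essentially the standard proof the paper is invoking when it defers to \cite[Lemma 5.1]{ABCD}: reduce, via Lemmas \ref{l:spigoli} and \ref{l:spigoli-allisciati}, to showing $\mathcal{M}\setminus\mathcal{K}^k$ deformation retracts onto the dual coskeleton $\mathcal{K}^*_{k+1}$ of dimension $m+n-k-1$, then carry out the piecewise-affine retraction in the barycentric subdivision (or, equivalently, appeal to the regular-neighborhood theory of \cite{Hirschsmoothrn}). The checks you flag — that the weight partition by $\dim\sigma_i\lessgtr k$ is intrinsic to $\mathcal{K}$, hence the retraction glues across faces of $\mathcal{K}'$, and that $x$ stays off $\mathcal{K}^k$ throughout because $t_r>0$ persists — are exactly the points that carry the proof.
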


\begin{proof}
See \cite[Lemma 5.1]{ABCD}.
\end{proof}

\subsection{Squeezing maps}

We associate to the neighborhoods $B_\delta , V_\delta$ some maps whose definition is recalled in the next lemma; we refer to \cite{ABCD} for a proof.

\begin{lem}\label{l:Phi}
Let $\mathcal{M}$ be as in Assumption \ref{a:1} and $\mathcal{K}$ a smooth triangulation of $\mathcal{M}$. For every $\varepsilon_a>0$ and $ \eta_a >0$ there is a positive number $\delta_a <  \eta_a$ with the following property. If $\gamma\in (0,1]$ is an arbitrary number, then there is a diffeomorphism $\Phi$ such that:
\begin{enumerate}\itemsep0.2em
\item $\Phi$ is isotopic to the identity and it coincides with the identity on $\mathcal{M}\setminus B_{\eta_a} (\mathcal{K}^k)$;
\item $\Lip (\Phi) \leq 1+ \varepsilon_a$;
\item For every point $p\in B_{\delta_a} (\mathcal{K}^k)$ there is an orthonormal frame $e_1, \ldots, e_{m+n}$ such that 
\begin{align}
|d\Phi_p (e_i)| &\leq 1+ \varepsilon_a \qquad &&\forall i\in \{1, \ldots, k\}, \tag{3.1}\\
|d\Phi_p (e_j)| &\leq \gamma &&\forall j\in \{k+1, \ldots, m+n\}\, . \tag{3.2}
\end{align}
\end{enumerate}
\end{lem}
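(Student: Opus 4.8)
The plan is to build $\Phi$ as a finite composition of elementary ``one--simplex'' squeezing diffeomorphisms, one attached to each simplex of $\mathcal{K}^k$, ordered by increasing dimension, and to exploit the scale--separated corner neighbourhoods of Lemma \ref{l:spigoli} to control the Lipschitz constant where their supports overlap. The model building block is the following: given $0<s<t$ and $\gamma\in(0,1]$, choose a smooth diffeomorphism $\rho\colon[0,\infty)\to[0,\infty)$ with $\rho(r)=\gamma r$ for $r\le s$, $\rho(r)=r$ for $r\ge t$, and $0<\rho'\le 1+\varepsilon_a$ everywhere. Such a $\rho$ exists as soon as $s/t$ is small enough, \emph{uniformly in $\gamma\in(0,1]$}, because the mean slope of $\rho$ on the transition interval equals $\frac{t-\gamma s}{t-s}\to 1$ as $s/t\to 0$; and a maximum--principle argument for $r\mapsto\rho(r)/r$ (it is $\gamma\le 1$ at $r=s$, equals $1$ for $r\ge t$, and cannot exceed $1+\varepsilon_a$ at an interior maximum, where $\rho'(r)$ equals it) then gives $\rho(r)/r\le 1+\varepsilon_a$ for free. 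For an $i$--simplex $\sigma\subset\mathcal{K}^k$, on a metric neighbourhood $B_t(\sigma)$ so small that the nearest--point projection $\pi_\sigma$ and the normal coordinate $\nu_\sigma$ (with $|\nu_\sigma(q)|=\dist(q,\sigma)$) are well defined and $\sigma$ is nearly flat at scale $t$, I would set
\[
\Psi_\sigma(q):=\Bigl(\pi_\sigma(q),\ \rho\bigl(|\nu_\sigma(q)|\bigr)\,|\nu_\sigma(q)|^{-1}\,\nu_\sigma(q)\Bigr),
\]
extended by the identity outside $B_t(\sigma)$. Then $\Psi_\sigma$ is a diffeomorphism isotopic to the identity, equal to the identity off $B_t(\sigma)$, fixing $\sigma$ pointwise, with $\Lip(\Psi_\sigma)\le 1+\varepsilon_a$ (the singular values of $d\Psi_\sigma$ lie in $\{\rho'(r),\rho(r)/r\}\subset(0,1+\varepsilon_a]$), and contracting each of the $m+n-i$ directions normal to $\sigma$ by the factor $\gamma$ throughout $B_s(\sigma)$ while essentially fixing the $i$ tangential ones.

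Next I would assemble these maps. Fix $C_0$ large and take the $\bar\delta$ of Lemma \ref{l:spigoli}, replacing the given $\eta_a$ by $\min(\eta_a,\bar\delta)$ (which only weakens the conclusion). Apply the model to every $i$--simplex $\sigma\subset\mathcal{K}^k$ with $t=C_0^{-i}\eta_a$ and $s=C_0^{-i}\delta_a$, but with $\varepsilon_a$ and $\gamma$ replaced by auxiliary constants $\varepsilon_a'$, $\gamma'$ depending only on $\varepsilon_a$, $\gamma$ and $k$, and let $\Phi$ be the composition of all the resulting $\Psi_\sigma$ ordered by increasing $\dim\sigma$ (within a fixed dimension the supports are pairwise disjoint once $\delta_a$ is small, so the order there is immaterial). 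Each factor is isotopic to the identity and equal to the identity off $B_{\eta_a}(\mathcal{K}^k)$, hence so is $\Phi$, which gives (1). Since at any point of $\mathcal{M}$ at most $k+1$ factors are nontrivial, choosing $\varepsilon_a'$ with $(1+\varepsilon_a')^{k+1}=1+\varepsilon_a$ yields $\Lip(\Phi)\le 1+\varepsilon_a$, i.e.\ (2). For (3): if $p\in B_{\delta_a}(\mathcal{K}^k)$ and $\delta_a<C_0^{-k}\eta_a$, the point of $\mathcal{K}^k$ nearest to $p$ lies in some simplex $\sigma_0$ of dimension $i_0\le k$, so $\Psi_{\sigma_0}$ contracts all $m+n-i_0\ge m+n-k$ directions normal to $\sigma_0$ at $p$ by $\gamma'$; choosing $\gamma'$ to absorb the contribution of the remaining active factors, one obtains an orthonormal frame whose last $m+n-k$ vectors $e_j$ are normal to $\sigma_0$ and satisfy $|d\Phi_p(e_j)|\le\gamma$, the remaining $e_i$ satisfying $|d\Phi_p(e_i)|\le\Lip(\Phi)\le 1+\varepsilon_a$ automatically.

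The hard part is the behaviour where the supports of several elementary squeezes overlap, that is, near the singular locus of the polyhedron $\mathcal{K}^k$: one must prevent the (at most $k+1$) differentials from compounding multiplicatively -- which would only give $\Lip(\Phi)\le(1+\varepsilon_a)^{k+1}$ and normal stretching by $\gamma(1+\varepsilon_a)^k$ -- and instead show that, up to an error that can be made arbitrarily small, they are block--diagonal with respect to one common orthonormal frame. This is precisely the role of Lemma \ref{l:spigoli}: near a point of $\partial V_{\bar\delta}(\mathcal{K}^k)$ at most one simplex of each dimension is relevant and a chart straightens the picture onto an orthant, so that a nested chain of faces $\sigma_0\subset\sigma_1\subset\cdots$ corresponds to a nested chain of coordinate blocks and each $\Psi_{\sigma_j}$ acts as the model (radial) squeeze on its block and as the identity on the complementary ones, up to a $C_0$-- and scale--dependent error that the smallness of $\eta_a$ and $\delta_a$ renders negligible; thus a direction normal to $\sigma_0$ is fixed or further contracted by each higher--dimensional factor, never stretched by more than $1+o(1)$. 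Making this approximate block structure quantitative is the technical crux; the rest is routine, and for the complete argument I would refer to \cite{ABCD}.
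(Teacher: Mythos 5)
The paper itself gives no proof of this lemma; it simply recalls the statement and refers to \cite{ABCD} (``we refer to \cite{ABCD} for a proof''). So there is no internal argument to compare against, and your closing deferral to \cite{ABCD} for the full details is in fact the paper's own posture. Your model building block (the radial profile $\rho$ with the maximum--principle control on $\rho(r)/r$), the per--simplex squeezes $\Psi_\sigma$, the scale separation $t=C_0^{-i}\eta_a$, $s=C_0^{-i}\delta_a$, and the role of Lemma \ref{l:spigoli} in organising overlaps are all the natural ingredients for such a construction.

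There is, however, one genuine gap in the assembly step. You write that ``within a fixed dimension the supports are pairwise disjoint once $\delta_a$ is small,'' but the support of $\Psi_\sigma$ is $B_{t}(\sigma)$ with $t=C_0^{-i}\eta_a$, a quantity that does not depend on $\delta_a$ at all; only the inner squeezing zone $B_s(\sigma)$, $s=C_0^{-i}\delta_a$, shrinks with $\delta_a$. Two $i$--simplices sharing a face $\tau$ of lower dimension have $t$--neighbourhoods that overlap in a fixed region near $\tau$, however small $\delta_a$ is. Consequently, the subsequent claim that ``at any point of $\mathcal{M}$ at most $k+1$ factors are nontrivial'' also fails as stated: near a vertex $v$ many edges emanate, and a point within $C_0^{-1}\eta_a$ of two of them has at least two nontrivial dimension--$1$ factors. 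The fix is to cut off each $\Psi_\sigma$, $\sigma\in\mathcal S_i$, so that it equals the identity on the larger neighbourhood $B_{C_0^{-(i-1)}\eta_a}(\mathcal K^{i-1})$ of the lower skeleton; with $C_0$ large the supports within a fixed dimension then really are disjoint, and the squeezing that $\Psi_\sigma$ would have performed near $\partial\sigma$ is instead delegated to the lower--dimensional factors. This is precisely what the scale--nested definition of $V_\delta$ in \eqref{e:intorni-spigolosi} is built to enable, so your instinct to invoke Lemma \ref{l:spigoli} is right, but it is used to control the \emph{interiors} of the $V_\delta$--corners and to truncate supports, not merely the boundary $\partial V_{\bar\delta}(\mathcal K^k)$. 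Once this truncation is in place, the elementary bookkeeping you describe — $\Lip(\Phi)\le(1+\varepsilon_a')^{k+1}$ and normal contraction by $\gamma'(1+\varepsilon_a')^{k}$, with $\varepsilon_a'$, $\gamma'$ adjusted downward — does close the argument without needing exact block--diagonality, so your self--diagnosed ``technical crux'' is less severe than you fear, provided the support--disjointness is repaired.
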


\noindent
We also recall the following simpler lemma, obtained as a minor modification of the former.

\begin{lemma}\label{l:second-Phi}
Let $\mathcal{M}$ be as in Assumption \ref{a:1}, $\mathcal{K}$ a triangulation, $j\in \{0, \ldots, m+n-1\}$. If $C_0$ and $\bar\delta^{-1}$ in Lemma \ref{l:spigoli} are chosen sufficiently large, then for every $\delta_b> \delta'_b>0$ there is a Lipschitz map $\Phi: \mathcal{M}\to \mathcal{M}$ with the following properties:
\begin{itemize}
\item $\Phi$ maps $V_{\delta'_b} (\mathcal{K}^j)$ into $\mathcal{K}^j$;
\item $\Phi$ is a smooth diffeomorphism between $\mathcal{M}\setminus \Phi^{-1} (\mathcal{K}^j)$ and $\mathcal{M}\setminus \mathcal{K}^j$;
\item $\Phi (p)=p$ for every $p\not\in V_{\delta_b} (\mathcal{K}^j)$. 
\end{itemize}
\end{lemma}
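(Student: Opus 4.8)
\textbf{Plan for the proof of Lemma \ref{l:second-Phi}.} The statement is a routine variant of Lemma \ref{l:Phi} (equivalently of \cite[Lemma 3.3 or its analog]{ABCD}), and the plan is to reduce it to the local normal-form picture provided by Lemma \ref{l:spigoli}. First I would fix the triangulation $\mathcal{K}$ and the skeleton $\mathcal{K}^j$, and recall that by Lemma \ref{l:spigoli}, upon choosing $C_0$ large and $\bar\delta$ small, the sets $V_\delta(\mathcal{K}^j)$ are ``cornered tubular neighborhoods'' of $\mathcal{K}^j$ whose boundary, near each point, is cut out by at most one hypersurface $\partial B_{C_0^{-i}\bar\delta}(\sigma)$ per dimension $i$; in particular near any point of $\mathcal{K}^j$ one has a diffeomorphism $\phi:U\to B_1\subset\mathbb{R}^{m+n}$ straightening the simplex containing that point to a coordinate subspace $\{x_{\bar j+1}=\cdots=x_{m+n}=0\}$ and straightening $V_\delta$ to a product of the simplex with a small cube (or ball) in the remaining $m+n-\bar j$ coordinates.

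\textbf{Construction of $\Phi$.} In these local coordinates I would define $\Phi$ to be a fiberwise radial squeezing in the ``normal'' directions $x_{\bar j+1},\dots,x_{m+n}$: fix a smooth function $\psi:[0,\infty)\to[0,\infty)$ with $\psi\equiv 0$ on a neighborhood of $[0,\delta'_b]$, $\psi(t)=t$ for $t\geq\delta_b$, and $\psi$ monotone increasing on $[\delta'_b,\delta_b]$; then set, in the normal slice, $x^{\perp}\mapsto \psi(|x^{\perp}|)\,x^{\perp}/|x^{\perp}|$ and leave the simplex coordinates unchanged, interpolating between the several local pictures using the hierarchical structure of $V_\delta$ (squeeze first toward the top-dimensional faces of $\mathcal{K}^j$, then toward the lower-dimensional strata, exactly as in the nested definition \eqref{e:intorni-spigolosi}). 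This map is Lipschitz, equals the identity outside $V_{\delta_b}(\mathcal{K}^j)$, collapses $V_{\delta'_b}(\mathcal{K}^j)$ into $\mathcal{K}^j$, and is a diffeomorphism from the complement of $\Phi^{-1}(\mathcal{K}^j)$ onto $\mathcal{M}\setminus\mathcal{K}^j$ because $\psi$ is a diffeomorphism from $(\delta'_b,\infty)$ onto $(0,\infty)$. I would then note that, since the ambient manifold is closed and the various $B_{C_0^{-i}\bar\delta}(\sigma)$ only overlap in the controlled way guaranteed by Lemma \ref{l:spigoli}, the local definitions patch together to a globally well-defined Lipschitz $\Phi:\mathcal{M}\to\mathcal{M}$.

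\textbf{Main obstacle.} The only genuinely delicate point is the consistency of the local squeezings along the lower-dimensional strata of $\mathcal{K}^j$, where a point may lie in the normal neighborhood of several simplices of different dimensions at once; this is precisely what the ``$C_0$ and $\bar\delta^{-1}$ sufficiently large'' hypothesis is designed to handle, since taking $C_0$ large forces the neighborhoods of lower-dimensional faces to be so thin, relative to those of the higher-dimensional ones, that one can perform the squeeze one dimension at a time (first the top stratum, then the next, and so on) without the successive corrections interfering, and the Lipschitz constant of the composition stays finite. I expect this bookkeeping with nested neighborhoods to be the bulk of the (routine) work, but since the analogous statement is proved in \cite{ABCD} and the present setting differs only in replacing integral coefficients by $\mathbb{Z}_2$ — which plays no role at all in a purely geometric statement about diffeomorphisms of $\mathcal{M}$ — I would simply carry out the same argument, or cite it, with the minor modification indicated.
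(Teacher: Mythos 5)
The paper does not actually prove this lemma; it simply states it and refers to \cite{ABCD}, noting that it is a minor modification of Lemma \ref{l:Phi}. Your plan — a fiberwise radial squeeze in the normal directions toward $\mathcal{K}^j$ in the coordinates furnished by Lemma \ref{l:spigoli}, applied stratum-by-stratum, with the hierarchy of widths $C_0^{-i}\delta$ guaranteeing the local squeezings patch consistently — is exactly the construction underlying both $\Phi$-lemmas in \cite{ABCD}, and the $\mathbb{Z}_2$ setting is indeed irrelevant to a purely differential-geometric statement about maps of $\mathcal{M}$. One small wrinkle in your write-up: if $\psi\equiv 0$ on a neighborhood of $[0,\delta'_b]$, then $\psi$ is \emph{not} a diffeomorphism from $(\delta'_b,\infty)$ onto $(0,\infty)$ as you assert, but rather from $(c,\infty)$ onto $(0,\infty)$ where $c>\delta'_b$ is where $\psi$ becomes positive; this causes no harm because the lemma only requires $\Phi$ to be a diffeomorphism on the complement of $\Phi^{-1}(\mathcal{K}^j)$, which is then (approximately) $\overline{V_c(\mathcal{K}^j)}$ rather than $\overline{V_{\delta'_b}(\mathcal{K}^j)}$, but the phrasing should be adjusted.
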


\noindent
Finally we recall the following consequence of the area formula, that is easily adapted from the integral setting to integral currents mod 2, taking into account that if $[T]$ is an integral current mod 2, $\varepsilon >0$ and $U$ an open set containing $\spt [T]$, then there exists an integral current $S=T$ mod 2 such that $\spt(S) \subset U$ and $\Mass(S)< \Mass([T]) + \varepsilon$.

\begin{lem}\label{l:squash-2}
Assume $\Phi$, $\gamma$, and $\varepsilon_a$ are as in Lemma \ref{l:Phi}. If $Z$ is any integer rectifiable current mod 2 of dimension $m>k$, then
\begin{equation}\label{e:est-mass}
\mathbb{M} (\Phi_\sharp (Z\res B_{\delta_d} (\mathcal{K}^k)))\leq C (1+ \varepsilon_a)^k \gamma^{m-k} \|Z\| (B_{\delta_d} (\mathcal{K}^k))\, ,   
\end{equation}
where $C$ is a dimensional constant which depends only on $m$ and $n$. 
\end{lem}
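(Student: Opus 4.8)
The plan is to reduce the mass estimate for $\Phi_\sharp(Z \res B_{\delta_d}(\mathcal{K}^k))$ to a pointwise bound on the Jacobian of $\Phi$ restricted to $m$-dimensional rectifiable sets, and then invoke the area formula for integral currents mod 2. First I would choose a rectifiable representative of $Z$, so that $Z \res B_{\delta_d}(\mathcal{K}^k)$ is carried by an $m$-rectifiable set $E \subset B_{\delta_d}(\mathcal{K}^k)$ with $\mathcal{H}^m$-approximate tangent planes $T_x E$ for $\mathcal{H}^m$-a.e.\ $x \in E$, and multiplicity at most $1$ (since we work mod 2). Since $\delta_d \le \delta_a < \eta_a$, the set $E$ is contained in $B_{\delta_a}(\mathcal{K}^k)$, so at every such $x$ we may apply part (3) of Lemma~\ref{l:Phi} to obtain an orthonormal frame $e_1, \dots, e_{m+n}$ at $x$ satisfying $(3.1)$ and $(3.2)$. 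The key point is then the elementary linear algebra estimate: for any $m$-dimensional subspace $V = T_x E$, the $m$-dimensional Jacobian $J_V(d\Phi_x)$, i.e.\ the norm of $\Lambda_m(d\Phi_x|_V)$, is bounded by $C(1+\varepsilon_a)^k \gamma^{m-k}$.

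The main technical step is this linear algebra bound, which I would carry out as follows. Fix the orthonormal basis $(e_i)$ of $\mathbb{R}^{m+n}$ from Lemma~\ref{l:Phi}, split the index set into the ``slow'' directions $I = \{1,\dots,k\}$ and the ``fast'' directions $J = \{k+1, \dots, m+n\}$, and pick any orthonormal basis $v_1, \dots, v_m$ of $V$. Expanding $d\Phi_x(v_a) = \sum_i \langle v_a, e_i\rangle\, d\Phi_x(e_i)$ and using multilinearity of the wedge product, $d\Phi_x(v_1) \wedge \dots \wedge d\Phi_x(v_m)$ is a sum, over multi-indices, of terms each of which is a product of at most $k$ factors of norm $\le 1+\varepsilon_a$ (those coming from $I$) and the remaining $\ge m-k$ factors of norm $\le \gamma$ (those coming from $J$), times coefficients bounded by $1$ in absolute value; there are at most $\binom{m+n}{m}$ such terms. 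Since $\gamma \le 1$ and $1+\varepsilon_a \ge 1$, each term is bounded by $(1+\varepsilon_a)^k \gamma^{m-k}$, which gives $J_V(d\Phi_x) \le \binom{m+n}{m}(1+\varepsilon_a)^k \gamma^{m-k}$; absorbing the binomial coefficient into a constant $C = C(m,n)$ yields the pointwise bound.

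Finally I would assemble the global estimate. By the area formula for Lipschitz images of rectifiable currents mod 2 (valid since, after passing to a rectifiable representative as noted just before the statement of Lemma~\ref{l:squash-2}, mod 2 mass equals $\mathcal{H}^m$ of the underlying set),
\begin{equation*}
\mathbb{M}\big(\Phi_\sharp(Z \res B_{\delta_d}(\mathcal{K}^k))\big) \le \int_{E} J_{T_x E}(d\Phi_x)\, d\mathcal{H}^m(x) \le C(1+\varepsilon_a)^k \gamma^{m-k}\, \mathcal{H}^m(E),
\end{equation*}
and $\mathcal{H}^m(E) = \|Z\|(B_{\delta_d}(\mathcal{K}^k))$ by the choice of representative. (The inequality, rather than equality, in the first step accounts for cancellation of overlapping sheets under the pushforward mod 2, which only helps.) I expect the only subtlety to be bookkeeping: making sure the frame in Lemma~\ref{l:Phi} is genuinely orthonormal so that the coefficients $\langle v_a, e_i\rangle$ are bounded by $1$, and checking that $\delta_d$ is small enough that $B_{\delta_d}(\mathcal{K}^k) \subset B_{\delta_a}(\mathcal{K}^k)$ so that the pointwise estimates of Lemma~\ref{l:Phi} apply on the support; both are immediate from the hypotheses. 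The linear-algebra estimate is routine but is the heart of the matter.
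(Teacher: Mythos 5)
Your proof is correct and takes essentially the same route the paper intends: the lemma is stated in the paper as "a consequence of the area formula," and your argument is exactly that, namely (i) pass to a rectifiable representative of $Z$ with multiplicity at most one so that mod 2 mass coincides with $\mathcal H^m$ of the carrier, (ii) bound the $m$-Jacobian of $d\Phi_x$ on any $m$-plane by expanding in the frame of Lemma~\ref{l:Phi} and counting that at most $k$ of the $m$ chosen frame vectors can lie among the slow directions, and (iii) integrate via the area formula, noting that mod 2 cancellation can only decrease the pushforward mass. The bookkeeping you flag (the frame is genuinely orthonormal, and $B_{\delta_d}(\mathcal K^k)\subset B_{\delta_a}(\mathcal K^k)$ since $\delta_d=\delta_a$ in the paper's convention) is indeed the only thing to check, and you handle it; the constant $\binom{m+n}{m}$ from the crude triangle-inequality count is acceptable since the statement only asks for a dimensional $C=C(m,n)$.
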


\section{Geometric measure theory propositions}\label{s:GMT}

We state the mod 2 analogs of the main geometric measure theory tools needed in the proof of Theorem \ref{t:1}, which have independent interest. 

\subsection{Codimension 2 smooth approximation}

The first statement $-$ Proposition \ref{p:poly_approx_prescribedsing} $-$ is the mod 2 version of the \emph{codimension 2} smooth approximation theorem in \cite[Section 4]{ABCD}, that is an upgraded version of the Federer and Fleming's strong polyhedral approximation theorem in the spirit of \cite[$(4.2.21)^{\nu}$]{Federerbook}\footnote{See also \cite[Theorem 3.4]{Marchesestuvard}.}, where the approximands are smooth submanifolds out from the $m-2$-skeleton $\mathcal{K}^{m-2}$ of a smooth triangulation of $\mathcal{M}$. The proof is an easy adaptation of the one in \cite{ABCD} for integral cycles and we remark that since we are dealing with integral cycles mod 2, the integral current representative has multiplicity one (and possibly boundary of multiplicity 2) by definition; hence, the proof of Proposition \ref{p:poly_approx_prescribedsing} is even simpler than in the integral setting since there is no need to perform Step 1 in the \emph{second approximation} (\emph{i.e.} clearing out the multiplicity and regularizing $\mathcal{M}\setminus\mathcal{K}^{m-1}$).

\begin{pro} \label{p:poly_approx_prescribedsing}
Let $\mathcal{M}$ be as in Assumption \ref{a:1} and $T$ be an $m$-dimensional integral cycle mod 2 in $\mathcal{M}$. 
For every fixed $\varepsilon_c > 0$ there is a mod 2 integral cycle $P$ homologous to $T$ and a smooth triangulation $\mathcal{K}$ of $\mathcal{M}$ with the following properties:
\begin{itemize}
\item[($a_0$)] $\mathbb{M}(P) \leq (1+\varepsilon_c) \mathbb{M}(T)$;
\item[($b_0$)] $\mathbb{F} (T-P)\leq \varepsilon_c$;
\item[($c_0$)] $\spt(P) \subset \{ x : \dist(x,\spt(T))\leq \varepsilon_c\}$;
\item[($d_0$)] for every $p\in \spt (P) \setminus \mathcal{K}^{m-1}$ there is a neighborhood $U$ of $p$ and a smooth $m$-dimensional submanifold $\Lambda$ of $\mathcal{M} \cap U$ with boundary contained in $\mathcal{M} \cap \partial U$ and such that $P \res U = \llbracket \Lambda \rrbracket$. 
\end{itemize}
\vspace{0.25cm}
\noindent
Furthermore, for a sufficiently small $\delta_c'>0$ and any $\delta_c<\delta_c'$ we can find another mod 2 integral cycle $P'$ homologous to $P$ with the following properties:
\vspace{0.25cm}
\begin{itemize}
\item[(a)] $\mathbb{M} (P') \leq (1+2\varepsilon_c) \mathbb{M} (T)$ and $\mathbb{F} (T-P') \leq 2\varepsilon_c$;
\item[(b)] $\spt(P') \subset \{ x : \dist(x,\spt(T))\leq 2\varepsilon_c\}$;
\item[(c)] $\|P'\| (B_{\delta'_c} (\mathcal{K}^{m-2})) \leq 2\varepsilon_c$;
\item[(d)] $P'\res \mathcal{M}\setminus B_{\delta_c} (\mathcal{K}^{m-2})= \llbracket\Gamma \rrbracket\modtwo$ for some smooth submanifold $\Gamma$ of $\mathcal{M}\setminus B_{\delta_c} (\mathcal{K}^{m-2})$ without boundary in $\mathcal{M}\setminus B_{\delta_c} (\mathcal{K}^{m-2})$.
\end{itemize}
\end{pro}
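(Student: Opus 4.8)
The plan is to follow the two-stage argument of \cite[Section 4]{ABCD} essentially verbatim, taking advantage of the fact that in the mod 2 setting every integral cycle has a rectifiable representative of multiplicity one, so the ``clearing out the multiplicity'' step from the integral case disappears. For the \emph{first approximation}, I would begin with the mod 2 strong polyhedral approximation theorem \cite[(4.2.21)$^\nu$]{Federerbook}: starting from $T$, one obtains a polyhedral cycle mod 2 that is close to $T$ in mass and flat norm, supported in a small neighborhood of $\spt(T)$, and homologous to $T$. One then fixes a smooth triangulation $\mathcal{K}$ of $\mathcal{M}$ compatible (after a small isotopy, or by choosing $\mathcal{K}$ fine and generic) with the polyhedral pieces, so that the polyhedral cycle is, away from $\mathcal{K}^{m-1}$, a finite union of open faces of codimension $0$ inside the top simplices — in particular a smooth embedded $m$-submanifold with boundary in the relative interior of charts, giving $(a_0)$--$(d_0)$. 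Here $(a_0)$, $(b_0)$, $(c_0)$ are the standard quantitative conclusions of the deformation/polyhedral approximation, and $(d_0)$ is just the local structure of a polyhedral chain away from the $(m-1)$-skeleton; the mod 2 multiplicity-one normalization guarantees $P\res U=\llbracket\Lambda\rrbracket$ rather than a multiple.

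For the \emph{second approximation}, I would first push $P$ off $\mathcal{K}^{m-1}$ down to $\mathcal{K}^{m-2}$: apply the mod 2 deformation theorem relative to the skeleta, or equivalently use the squeezing map of Lemma \ref{l:second-Phi} with $j=m-1$ to produce a cycle that is smooth (with boundary) away from a small neighborhood of $\mathcal{K}^{m-1}$ and whose singular behavior is concentrated near $\mathcal{K}^{m-1}$; then a further compression using the Federer--Fleming machinery concentrates the remaining non-manifold part near $\mathcal{K}^{m-2}$, because a cycle (no boundary) cannot be ``spread out'' along the $(m-1)$-skeleton — the codimension-$1$ faces of the deformed chain must cancel mod $2$, so the genuinely singular locus drops to codimension $2$. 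Controlling the mass lost in this compression via the area estimate (Lemma \ref{l:squash-2}, with $k=m-2$ and $\gamma$ small, giving a $\gamma^{2}$ gain on the mass concentrated near $\mathcal{K}^{m-2}$) yields $(c)$; choosing all parameters small relative to $\varepsilon_c$ preserves $(a)$ and $(b)$ by the triangle inequality for mass and flat norm. Finally, on $\mathcal{M}\setminus B_{\delta_c}(\mathcal{K}^{m-2})$ the resulting cycle $P'$ is, by construction, a smooth polyhedral-type $m$-submanifold, and since $P'$ is a cycle and the complement region has no spurious boundary contributions, $\Gamma$ is boundaryless there, giving $(d)$.

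The main obstacle is the second approximation — specifically, arranging that after pushing off $\mathcal{K}^{m-1}$ the remaining singularities truly collapse into a neighborhood of $\mathcal{K}^{m-2}$ while keeping quantitative control of the mass. This is exactly Step 2 (and beyond) of the ``second approximation'' in \cite{ABCD}; the delicate point is that the deformation near the $(m-1)$-skeleton must be done so that the codimension-$1$ faces cancel mod $2$ and the area of the part that gets squeezed toward $\mathcal{K}^{m-2}$ is bounded by a small constant times $\gamma^{m-(m-2)}=\gamma^{2}$, which one then sends to zero. As the excerpt notes, the mod 2 case is strictly easier here because there is no multiplicity to clear and the representative already has multiplicity one, so I would simply cite \cite[Section 4]{ABCD} for the construction, indicate that Step 1 of the second approximation is vacuous, and check that every mass and flat-norm estimate goes through unchanged with $\mathbb{M}$, $\mathbb{F}$ replaced by their mod 2 counterparts and $\llbracket\cdot\rrbracket$ by $\llbracket\cdot\rrbracket\modtwo$.
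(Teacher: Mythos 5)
Your first-approximation step is correct and matches the paper: apply the mod 2 strong polyhedral approximation theorem to obtain a polyhedral cycle $P$ supported near $\spt(T)$, close in mass and flat norm, homologous to $T$, whose singular set sits in $\mathcal{K}^{m-1}$; and you are right that the mod 2 multiplicity-one normalization makes Step 1 of \cite{ABCD}'s second approximation (clearing multiplicities) vacuous. The gap is in your description of what remains for the second approximation. You propose to \emph{squeeze} the singular behavior from $\mathcal{K}^{m-1}$ down toward $\mathcal{K}^{m-2}$ using deformation maps (Lemma \ref{l:second-Phi}, Lemma \ref{l:squash-2}) and assert that mod 2 cancellation of codimension-1 faces plus compression makes the cycle smooth away from $B_{\delta_c}(\mathcal{K}^{m-2})$. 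This is not so: pushing a polyhedral chain forward under a Lipschitz squeeze map does not erase the dihedral creases where exactly two $m$-cells of $P$ meet along an open $(m-1)$-face. Mod 2 cancellation guarantees $P$ is a cycle (no boundary along those faces), not that the union of the two incident $m$-cells is a smooth submanifold; the $(m-1)$-faces have nonempty interior lying far from $\mathcal{K}^{m-2}$, so concentrating mass near $\mathcal{K}^{m-2}$ does not remove them. Your concluding phrase that $P'$ is ``a smooth polyhedral-type $m$-submanifold'' away from $B_{\delta_c}(\mathcal{K}^{m-2})$ conceals exactly this issue: a polyhedral hypersurface is not smooth along its codimension-1 faces.

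What is actually needed — and what the paper does, following \cite{ABCD} — is a \emph{local smoothing of each crease}: around each open $(m-1)$-face $\sigma$ (away from a $\delta_c$-neighborhood of $\mathcal{K}^{m-2}$) one takes a tubular slice $U_i\times B^{n+1}_{\lambda_i}$ in which $P$ appears as two half-hyperplanes $\Lambda_1,\Lambda_2$ in directions $w_1,w_2$, and then replaces them with the graph over $U_i$ of a smooth curve $\gamma$ in $\mathbb{R}^{n+1}$ agreeing with $H_1\cup H_2$ outside the unit ball. Shrinking the scaling parameter $\kappa_i$ makes the mass and flat-norm perturbation as small as desired, and the modification is smooth and matches $P$ outside $\mathcal{N}_i$. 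Conclusion $(c)$ is then obtained simply by choosing $\delta_c'$ small enough that $\|P\|(B_{\delta_c'}(\mathcal{K}^{m-2}))\le\varepsilon_c$ — the squeezing Lemma \ref{l:squash-2} is not used in this proposition at all; it enters only later in Proposition \ref{p:squash}. So the strategy you invoke is the wrong tool here; the correct ingredient is the explicit crease-smoothing construction.
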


\begin{proof}
Choose an integral mod 2 cycle $T$ in $\mathcal{M}$ and consider it as a mod 2 integral cycle in $\mathbb{R}^N$. Apply now the \emph{first approximation} of \cite[Proposition 4.1]{ABCD}, which can be readily adapted to mod 2 cycles, to obtain a mod 2 integral cycle $P$ homologous to $T$ with $\spt(P) \subset \mathcal{M}$ and such that 
\begin{itemize}
\item[($a_0$)] $\mathbb{M}(P) \leq (1+\varepsilon_c) \mathbb{M}(T)$;
\item[($b_0$)] $\mathbb{F} (T-P)\leq \varepsilon_c$;
\item[($c_0$)] $\spt(P) \subset \{ x : \dist(x,\spt(T))\leq \varepsilon_c\}$;
\item[($d_0$)] $P = \sum_{F\in \mathcal{F}^m} \llbracket F \rrbracket$ mod 2, where $\mathcal{F}^m$ is the collection of $m$-dimensional oriented cells of $\mathcal{K}$, for a suitable choice of $\mathcal{K}=\mathcal{K}(T, \varepsilon)$.
\end{itemize}

\noindent
Starting with $P$ we now need to perform the \emph{second approximation} of \cite[Proposition 4.1]{ABCD}, which was divided in Step 1 (regularization in the complement of $\mathcal{K}^{m-1}$) and Step 2 (removal of the $m-1$-dimensional singularities); note that since $P$ has multiplicity $1$ for $\|P\|$-a.e. point, $P$ is already regular on $\mathcal{M}\setminus \mathcal{K}^{m-1}$, with no need to perform Step 1. 

We now only need to perturb $P$ by removing its $m-1$-singularities away from a small neighbourhood of $\mathcal{K}^{m-1}$, which follows closely the procedure done in \cite{ABCD}; for the reader’s convenience, we recall it below with the needed minor adjustments.

\vspace{0.25cm}
\noindent
{\bf Removing the $m-1$-dimensional singularities.}

Consider an arbitrary face $F^k$ and let $\sigma_i$ be an arbitrary $(m-1)$-dimensional face of $F^k$. Fixing a $\delta_c>0$, we modify $P$ in a neighborhood of $\sigma\setminus B_{\delta_c} (\mathcal{K}^{m-2})$ to a new current $P'$ in the same homology class, close to it in terms of mass, support and flat norm, and with the property that $P'$ is smooth in that neighborhood. The neighborhood in which we perturb $P$ is of the form $B_\lambda (\sigma)\setminus B_{\delta_c} (\mathcal{K}^{m-2})$. 

Given the structure of $P$ just obtained, if $\lambda$ is sufficiently small, there is an open subset $U_i \subset \mathbb R^{m-1}$ and a smooth parametrization 
$$
\Phi : U_i \times B_{\lambda}^{n+1} \to \mathcal{M}
$$
of a normal neighborhood $\mathcal{N}_i$ of $\sigma \setminus B_{\delta_c} (\mathcal{K}^{m-2})$ with thickness $\lambda_i$ and with the property that $\spt (P\res \mathcal{N}_i)$ can be described in the following way. There are two distinct unit vectors $w_1, w_2 \in \partial B_1^{n+1} \subset \mathbb R^{n+1}$ such that, if we let 
$$
\Lambda_\ell = \{(\sigma, s w_\ell) : \sigma \in U_i, 0 < s < \lambda_i\}\, ,
$$
then, since $P$ has no mod 2 boundary in $\mathcal{N}_i$,

 $$P \res \mathcal{N}_i = \Phi_\sharp \llbracket \Lambda_1 \rrbracket + \Phi_\sharp \llbracket \Lambda_2 \rrbracket \,\, \text{mod 2}.$$

\noindent
Consider now the halflines $H_\ell = \{\lambda w_\ell : \lambda >0\}$ for $\ell\in \{1,2\}$ in $\mathbb R^{n+1}$. We can find a smooth curve $\gamma$ in $\mathbb R^{n+1}$ such that $\llbracket \gamma \rrbracket \res \mathbb R^{n+1} \setminus B_1= (\llbracket H_1 \rrbracket  - \llbracket H_{2}\rrbracket)\res \mathbb R^{n+1}\setminus B_1$ mod 2. Furthermore we let $\tau_t: \mathbb R^{n+1} \to \mathbb R^{n+1}$ be the homothety $y\mapsto t y$ and denote by $\gamma_{t}$ the curve $\tau_t (\gamma)$. We are now ready to define a replacement for $P\res \mathcal{N}_i$. We fix a smooth compactly supported function $\psi_i$ in $\mathbb R^{n+1}$ which is positive on $U_i$ and vanishes on $\partial U_i$, a small positive number $\kappa_i$, and we define 
\[
\Sigma^i := \left\{(x,y) : x\in U_i, y \in \gamma_{ \kappa_i \psi_i (x)}\right\} \cap U_i \times B^{n+1}_{\lambda_i}\, .
\]
Choosing $\kappa_i$ sufficiently small we can ensure that the mod 2 current $P^{i} := \Phi_\sharp \Sigma^i$ satisfies $\partial P^i = \partial (P\res \mathcal{N}_i)$. Moreover we can make $\mathbb{F} (P^i - P\res \mathcal{N}_i)$ and $\mathbb M (P^i) - \mathbb M (P\res \mathcal{N}_i)$ smaller than any desired threshold by choosing $\kappa_i$ sufficiently small. Note finally that, clearly, $\Sigma_i$ is smooth in $\mathcal{N}_i$. 

We next enumerate all the $m-1$-dimensional faces $\sigma_i$ of all the $m$-dimensional faces $F^k$ as $\sigma_1, \sigma_2, \ldots , \sigma_N$. We choose our parameters in such a way that the sets $\mathcal{N}_i$ are pairwise disjoint. Our final mod 2 current $P'$ will then be defined to be 
\[
P' := \sum_i P^i + P \res \mathcal{M} \setminus \bigcup_i \mathcal{N}_i\, .
\]
$P'\res \mathcal{M}\setminus B_{\delta_c}(\mathcal{K}^{m-2})$ is then smooth by construction and, choosing the parameters accordingly, $P'$ is homologous to $P$ and we achieve the desired estimates since we can make $\spt (P')$ arbitrarily close to $\spt (P)$, $\mathbb{M} (P')$ arbitrarily close to $\mathbb{M} (P)$, and $\mathbb{F} (P'-P)$ arbitrarily small.

Finally, coming the estimate on $\|P'\| (B_{\delta_c'} (\mathcal{K}^{m-2}))$, the proof follows \emph{verbatim} that of \cite{ABCD}, with $\delta_c'$  chosen small enough just to ensure that $\|P\| (B_{\delta_c'} (\mathcal{K}^{m-2})) \leq \varepsilon_c$.
\end{proof}

\begin{remark}\label{r:m-2corepresentability}
A routine modification of the arguments in the proof of Proposition \ref{p:poly_approx_prescribedsing} implies that the mod 2 cycle $P'$ can be chosen so that its singularities are all {\em contained} in $\mathcal{K}^{m-2}$.
\end{remark}

\subsection{Squeezing lemma}

In the second statement $-$ Proposition \ref{p:squash}, which is the mod 2 version of the \emph{squeezing} lemma in \cite[Section 4]{ABCD} $-$ we are given two $m$-dimensional integral mod 2 cycles $S$ and $R$ which agree outside of a sufficiently small neighborhood of the $m-2$-dimensional skeleton $\mathcal{K}^{m-2}$. We will then show that:

\begin{itemize}
\item $S$ and $R$ represent the same homology class;
\item There is a smooth deformation $R'$ of $R$ which is close, in terms of mass and in flat norm, to $S$;
\item $R'$ coincides with $S$ outside a slightly larger neighborhood of $\mathcal{K}^{m-2}$.
\end{itemize}

\begin{pro}\label{p:squash}
Let $m$ and $\mathcal{M}$ be as in Assumption \ref{a:1} and $\mathcal{K}$ a smooth triangulation of $\mathcal{M}$. Then for every $\varepsilon_d >0$ and every $\eta_d>0$ there exists $\delta_d(\varepsilon_d, \eta_d, \mathcal{K}, \mathcal{M})>0$ with the following property. Suppose $S$ and $R$ are $m$-dimensional integral cycles mod 2 in $\mathcal{M}$ and that 
\[
S\res\mathcal{M}\setminus B_{\delta_d}\left(\mathcal{K}^{m-2}\right)=R\res\mathcal{M}\setminus B_{\delta_d}\left(\mathcal{K}^{m-2}\right)\, .
\]
Then $S$ and $R$ are homologous and moreover there exist a mod 2 integral cycle $R'$ in their homology class and a diffeomorphism $\Phi$ of $\mathcal{M}$ with the following properties:
\begin{enumerate}\itemsep0.2em
\item $\mathbb{M}(R') \leq (1+\varepsilon_d) \mathbb{M}(S)$;
\item $\mathbb{F}(R'-S)\leq C (\varepsilon_d \mathbb{M} (S)+2\|S\| (B_{\eta_d} (\mathcal{K}^{m-2})))^{\frac{m+1}{m}}$, with $C=C (\mathcal{M})$;
\item $R'\res\mathcal{M}\setminus B_{\eta_d}\left(\mathcal{K}^{m-2}\right)=S\res\mathcal{M}\setminus B_{\eta_d}\left(\mathcal{K}^{m-2}\right)$;
\item $\Phi$ is in the isotopy class of the identity and $R'= \Phi_\sharp R$.
\end{enumerate}
\end{pro}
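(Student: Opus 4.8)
\emph{Approach.} We follow \cite[Proposition~4.3]{ABCD}: set $R':=\Phi_\sharp R$ for a suitable ``squeezing'' diffeomorphism $\Phi$, furnished by Lemma~\ref{l:Phi}, which collapses the directions normal to $\mathcal{K}^{m-2}$ by a small factor $\gamma$; the mass of the collapsed portion is controlled by the area estimate of Lemma~\ref{l:squash-2}, and the flat bound follows from the mod~$2$ isoperimetric inequality on $\mathcal{M}$.

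\emph{Set-up.} Apply Lemma~\ref{l:Phi} with $k=m-2$, $\eta_a:=\eta_d$, and $\varepsilon_a>0$ a constant to be fixed below (small in terms of $\varepsilon_d$ and $m$); this produces a threshold $\delta_a<\eta_d$ depending only on $\varepsilon_d,\eta_d,\mathcal{K},\mathcal{M}$, and we set $\delta_d:=\delta_a$. Let now $S,R$ be as in the statement. Since $\|R-S\|(\mathcal{M}\setminus B_{\delta_d}(\mathcal{K}^{m-2}))=0$ we have $R-S=(R-S)\res B_{\delta_d}(\mathcal{K}^{m-2})$; choose $\gamma\in(0,1]$ (depending on $S$ and $R$) so small that Lemma~\ref{l:squash-2}, applied to $Z=R-S$ with $k=m-2$, gives $\mathbb{M}(\Phi_\sharp(R-S))\le C(1+\varepsilon_a)^{m-2}\gamma^{2}\,\|R-S\|(B_{\delta_d}(\mathcal{K}^{m-2}))\le\tfrac{\varepsilon_d}{2}\,\mathbb{M}(S)$, and let $\Phi$ be the associated diffeomorphism; put $R':=\Phi_\sharp R$. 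Item~(4) holds by construction; and since $\Phi$, together with its differential, is the identity on $\mathcal{M}\setminus B_{\eta_d}(\mathcal{K}^{m-2})$, where moreover $R=S$, we get $(R'-S)\res(\mathcal{M}\setminus B_{\eta_d}(\mathcal{K}^{m-2}))=0$, which is item~(3).

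\emph{Homology and mass.} The mod~$2$ cycle $S-R$ is supported in the compact set $\clos{B_{\delta_d}(\mathcal{K}^{m-2})}$, which for $\delta_d$ small is a regular neighborhood of $\mathcal{K}^{m-2}$ and hence homotopy equivalent to it; as $\mathcal{K}^{m-2}$ is $(m-2)$-dimensional its $\mathbb{Z}_2$-homology in degree $m$ vanishes, so $S-R$ is a mod~$2$ boundary and $[S]=[R]$ in $H_m(\mathcal{M},\mathbb{Z}_2)$. Since $\Phi$ is isotopic to the identity, $R'=\Phi_\sharp R$ is homologous to $R$, hence to $S$, so $R'$ lies in their common class. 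For item~(1), writing $R=S+(R-S)$ and using $\Lip(\Phi)\le 1+\varepsilon_a$ together with the choice of $\gamma$ gives $\mathbb{M}(R')\le\mathbb{M}(\Phi_\sharp S)+\mathbb{M}(\Phi_\sharp(R-S))\le(1+\varepsilon_a)^m\mathbb{M}(S)+\tfrac{\varepsilon_d}{2}\mathbb{M}(S)$; fixing $\varepsilon_a$ so that $(1+\varepsilon_a)^m\le 1+\tfrac{\varepsilon_d}{2}$ yields $\mathbb{M}(R')\le(1+\varepsilon_d)\mathbb{M}(S)$.

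\emph{Flat-norm estimate, and the main difficulty.} By item~(3), $\mathbb{M}(R'-S)=\|R'-S\|(B_{\eta_d}(\mathcal{K}^{m-2}))$, and decomposing $R'=\Phi_\sharp S+\Phi_\sharp(R-S)$, using $\Phi^{-1}(B_{\eta_d}(\mathcal{K}^{m-2}))=B_{\eta_d}(\mathcal{K}^{m-2})$, the area formula, and the choice of $\gamma$, one obtains $\mathbb{M}(R'-S)\le(1+\varepsilon_a)^m\|S\|(B_{\eta_d}(\mathcal{K}^{m-2}))+\tfrac{\varepsilon_d}{2}\mathbb{M}(S)+\|S\|(B_{\eta_d}(\mathcal{K}^{m-2}))\le(1+\varepsilon_a)^m\big(\varepsilon_d\mathbb{M}(S)+2\|S\|(B_{\eta_d}(\mathcal{K}^{m-2}))\big)$. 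As $R'-S$ is nullhomologous (by the previous step), the mod~$2$ isoperimetric inequality on $\mathcal{M}$ provides an $(m+1)$-dimensional mod~$2$ current $W$ with $\partial W=R'-S$ and $\mathbb{M}(W)\le C(\mathcal{M})\,\mathbb{M}(R'-S)^{(m+1)/m}$, whence $\mathbb{F}(R'-S)\le\mathbb{M}(W)\le C(\mathcal{M})\big(\varepsilon_d\mathbb{M}(S)+2\|S\|(B_{\eta_d}(\mathcal{K}^{m-2}))\big)^{(m+1)/m}$ after absorbing the factor $(1+\varepsilon_a)^{m+1}$ into $C(\mathcal{M})$. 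Beyond bookkeeping, the one point that needs care is the order of the choices: $\delta_d$ is determined by $\varepsilon_d,\eta_d,\mathcal{K},\mathcal{M}$ alone, \emph{before} $S$ and $R$ are given, whereas the collapsing factor $\gamma$ — and with it the diffeomorphism $\Phi$ — is chosen afterwards, small relative to $\|R-S\|$ near $\mathcal{K}^{m-2}$; this is exactly the structure of Lemma~\ref{l:Phi}, in which $\delta_a$ is produced before $\gamma$. One also uses repeatedly the elementary fact that a diffeomorphism equal to the identity off $B_{\eta_d}(\mathcal{K}^{m-2})$ maps that neighborhood bijectively onto itself, which is what locates the supports and justifies the mass comparisons above.
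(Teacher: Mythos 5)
Your proposal is correct and follows essentially the same strategy as the paper's proof: fix $\delta_d$ from Lemma~\ref{l:Phi} before $S,R$ are given, push forward by the squeezing diffeomorphism with $\gamma$ chosen afterwards to make the collapsed mass (controlled via Lemma~\ref{l:squash-2}) small relative to $\varepsilon_d\,\mathbb{M}(S)$, and finish with the mod~$2$ isoperimetric inequality. The only substantive deviation is in item~(2): the paper derives it directly from items~(1) and~(3) via the identity $\mathbb{M}(S-R')=2\|S\|(B_{\eta_d}(\mathcal{K}^{m-2}))+\mathbb{M}(R')-\mathbb{M}(S)$, which yields exactly the stated bound, whereas your direct estimate of $\|\Phi_\sharp S\|(B_{\eta_d})$ produces an extra factor $(1+\varepsilon_a)^{m+1}$ that you propose to absorb into $C(\mathcal{M})$ — but since $\varepsilon_a$ depends on $\varepsilon_d$, this absorption is only licit after first normalizing $\varepsilon_d\le 1$ (or one should simply argue as the paper does); similarly, the paper records the harmless normalization $\mathbb{M}(S)>0$ so that a valid $\gamma\in(0,1]$ can be selected.
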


\begin{proof}
The proof follows \emph{verbatim} that of \cite[Proposition 4.3]{ABCD}; for the reader’s convenience, we recall it below.

Without loss of generality we can assume that the $\mathbb{Z}_2$ homology class of $S$ is nontrivial, so that $\mathbb{M} (S)>0$. The conclusion that $R$ and $S$ are homologous follows from the fact that they coincide outside $B_{\delta_d} (\mathcal{K}^{m-2})$. In particular $\spt (S-R)\subset B_{\delta_d} (\mathcal{K}^{m-2})$: since for $\delta_d$ smaller than a constant $c (\mathcal{K})$ the latter has trivial $m$-dimensional homology, it follows that $S-R$ is homologically trivial. 

We now let $\varepsilon_d$ and $\eta_d$ be given as in the statements. We further fix $\bar{\varepsilon}_d$, whose choice will be specified later (but which will depend only on $\varepsilon_d$), and apply Lemma \ref{l:Phi} with $\varepsilon_a=\bar\varepsilon_d$ and $\eta_a=\eta_d$. We therefore get the parameter $\delta_a=:\delta_d$ (which will be the one of the conclusion of the proposition) and, after fixing yet another $\gamma$ (whose choice will now depend on $R$), we get the map $\Phi$ satisfying the requirements of Lemma \ref{l:squash-2}. The requirements (3) and (4) of Proposition \ref{p:squash} are then satisfied by construction.  
Moreover estimate (2) follows from the isoperimetric inequality for mod 2 integral currents and from (1) and (3). Indeed there is a mod 2 integral current $T$ such that $\partial T = S-R'$ and 
\[
\mathbb{M} (T) \leq C \left(\mathbb{M} (S-R')\right)^{\frac{m+1}{m}}
\]
with $C= C (\mathcal{M})$. Using (1) and (3) we then estimate
\begin{align}
\mathbb{M} (S-R') &= \|S-R'\| (B_{\eta_d} (\mathcal{K}^{m-2})) \leq \|S\| (B_{\eta_d} (\mathcal{K}^{m-2}))
+ \|R'\| (B_{\eta_d} (\mathcal{K}^{m-2})) \nonumber\\
&= \|S\| (B_{\eta_d} (\mathcal{K}^{m-2})) + \mathbb{M} (R') - \|S\| (\mathcal{M}\setminus B_{\eta_d} (\mathcal{K}^{m-2}))\nonumber\\
&= 2 \|S\| (B_{\eta_d} (\mathcal{K}^{m-2})) + \mathbb{M} (R') - \mathbb{M} (S)\nonumber\\
&\leq 2 \|S\| (B_{\eta_d} (\mathcal{K}^{m-2})) + \varepsilon_d \mathbb{M} (S)\,.
\end{align}
It remains to prove (1). Note that 
\begin{align}
\mathbb{M} (R') &\leq \mathbb{M} (\Phi_\sharp (R\res B_{\delta_d} (\mathcal{K}^{m-2}))) + \mathbb{M} (\Phi_\sharp (R\res \mathcal{M}\setminus B_{\delta_d} (\mathcal{K}^{m-2})))\nonumber\\
&= \mathbb{M} (\Phi_\sharp (R\res B_{\delta_d} (\mathcal{K}^{m-2}))) + \mathbb{M} (\Phi_\sharp (S\res \mathcal{M}\setminus B_{\delta_d} (\mathcal{K}^{m-2})))\nonumber\\
&\leq \mathbb{M} (\Phi_\sharp (R\res B_{\delta_d} (\mathcal{K}^{m-2}))) + ({\rm Lip}\, \Phi)^m \mathbb{M} (S)\nonumber\\
&\leq \mathbb{M} (\Phi_\sharp (R\res B_{\delta_d} (\mathcal{K}^{m-2}))) + (1+\bar \varepsilon_d)^m \mathbb{M} (S)\, .
\end{align}
Hence we apply Lemma \ref{l:squash-2} and infer 
\[
\mathbb{M} (R') \leq C (1+\bar\varepsilon_d)^{m-2} \gamma^2 \mathbb{M} (R) + (1+\bar \varepsilon_d)^m \mathbb{M} (S)\,.
\]
Next we fix $\bar \varepsilon_d$ so that $(1+\bar\varepsilon_d)^m=1+\frac{\varepsilon_d}{2}$, and then we choose $\gamma$ sufficiently small so that 
\[
C (1+\bar\varepsilon_d)^{m-2} \gamma^2 \mathbb{M} (R) \leq \frac{\varepsilon_d}{2} \mathbb{M} (S)\, .
\]
Note that the choice of $\gamma$, unlike that of $\varepsilon_d$, will indeed depend on $R$ and $S$.
\end{proof}

\section{Proof of the theorem}\label{s:proofofmain}

Before coming to the proof of Theorem \ref{t:1}, we state some preliminary topological results and refer to Appendix \ref{s:Appendix_ThomMO} for an extended discussion. \vspace{0.25cm}

A refined study of the cohomology of the infinite Grassmannian $BO(n)$ allows to prove that the Thom space of the universal $n$-plane bundle $T(\gamma^n)$ is $2n$-equivalent to a product of mod 2 Eilenberg-MacLane spaces; we refer to Appendix \ref{s:Appendix_ThomMO} for the precise definition of the indices and for a proof of Theorem \ref{t:productEM}. In particular, denoting by $Y$ the product of Eilenberg-MacLane spaces given below: $$Y:=K(\mathbb{Z}_2,n) \times K(\mathbb{Z}_2,n+2) \times \dots \times \left(K(\mathbb{Z}_2,n+h)\right)^{d(h)} \times \dots \times \left(K(\mathbb{Z}_2,2n)\right)^{d(n)},$$ we have the following theorem.

\begin{thm}[\protect{\cite[Théorème II.10]{Thom54}}]\label{t:productEM}
There exists a map $H: T(\gamma^n) \rightarrow Y$ which is a $2n$-equivalence, for all positive integers $n$.
\end{thm}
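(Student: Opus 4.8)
The plan is to establish Theorem~\ref{t:productEM} by comparing the mod 2 cohomology of $T(\gamma^n)$ with that of a suitable product $Y$ of Eilenberg--MacLane spaces, and then invoking Theorem~\ref{t:II.6}. First I would recall, via the Thom isomorphism theorem with $\mathbb{Z}_2$ coefficients, that $\widetilde{H}^*(T(\gamma^n),\mathbb{Z}_2)$ is a free module over $H^*(BO(n),\mathbb{Z}_2)=\mathbb{Z}_2[w_1,\dots,w_n]$ on the Thom class $u$, so that as a module over the Steenrod algebra $\mathcal{A}_2$ the reduced cohomology $\widetilde{H}^*(T(\gamma^n))$ is cyclic, generated by $u$ in degree $n$, and $\mathrm{Sq}(u)=(w_1+\dots+w_n)\cdot u$ in terms of the total Steenrod square. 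The point of Thom's computation (\cite[Théorème II.10]{Thom54}; see also \cite{MilnorStasheff}, \cite{Switzer}) is that, through degree $2n$, the $\mathcal{A}_2$-module $\widetilde{H}^*(T(\gamma^n))$ is a free (unstable) $\mathcal{A}_2$-module on generators $u=x_0$ in degree $n$ together with finitely many further generators $x_1,\dots$ in degrees $n+2,\dots,2n$, whose multiplicities $d(h)$ in degree $n+h$ are precisely the numbers appearing in the definition of $Y$ in the Appendix. Equivalently, in degrees $\le 2n$, $\widetilde{H}^*(T(\gamma^n))$ splits as a direct sum of cyclic $\mathcal{A}_2$-modules, one of which is free on $u$.

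Next I would use the fact that a mod 2 Eilenberg--MacLane space $K(\mathbb{Z}_2,q)$ has $\widetilde{H}^*(K(\mathbb{Z}_2,q),\mathbb{Z}_2)$ equal, in the stable range (degrees $<2q$), to the free unstable $\mathcal{A}_2$-module on the fundamental class $\iota_q$ in degree $q$; this is Serre's computation of $H^*(K(\mathbb{Z}_2,q);\mathbb{Z}_2)$. Hence for the product $Y=K(\mathbb{Z}_2,n)\times\prod_{h}\bigl(K(\mathbb{Z}_2,n+h)\bigr)^{d(h)}$, the Künneth theorem gives, in degrees $\le 2n$, that $\widetilde{H}^*(Y,\mathbb{Z}_2)$ is exactly the direct sum of the corresponding free cyclic $\mathcal{A}_2$-modules on the classes $\iota_n$ and $\iota_{n+h}^{(j)}$. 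The choice of the indices $d(h)$ in the Appendix is made precisely so that this matches the description of $\widetilde{H}^*(T(\gamma^n))$ from the previous paragraph degree by degree.

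With both sides identified, I would construct the map $H$. By the classifying property of Eilenberg--MacLane spaces, the class $u\in H^n(T(\gamma^n),\mathbb{Z}_2)$ and the generators $x_i\in H^{n+h_i}(T(\gamma^n),\mathbb{Z}_2)$ determine maps $T(\gamma^n)\to K(\mathbb{Z}_2,n)$ and $T(\gamma^n)\to K(\mathbb{Z}_2,n+h_i)$, and their product defines $H:T(\gamma^n)\to Y$. By construction $H^*$ sends $\iota_n\mapsto u$ and $\iota_{n+h_i}^{(j)}\mapsto x_i$; since these classes generate $\widetilde{H}^*(Y)$ as an $\mathcal{A}_2$-module in degrees $\le 2n$ and $\widetilde{H}^*(T(\gamma^n))$ is the free $\mathcal{A}_2$-module on the same generators in that range, $H^*$ is an isomorphism in degrees $<2n$ and a monomorphism in degree $2n$ (indeed an isomorphism there too). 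Both spaces are simply connected ($T(\gamma^n)$ is $(n-1)$-connected with $n\ge 1$, so at least $1$-connected since $n\ge1$ gives $(n-1)\ge 0$; for $n=1$ one argues slightly differently, or one uses that $T(\gamma^1)\simeq \mathbb{R}P^\infty=K(\mathbb{Z}_2,1)$ directly), so Theorem~\ref{t:II.6} with $p=2$ and $k=2n$ applies and yields that $H$ is a $2n$-equivalence.

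The main obstacle is the input from the previous paragraph, namely the precise algebraic statement that $\widetilde{H}^*(T(\gamma^n))$ is, through degree $2n$, a \emph{free} unstable $\mathcal{A}_2$-module on generators one of which is $u$, together with the identification of the generating degrees $n+h$ and multiplicities $d(h)$. This is exactly Thom's theorem and is the content of the Appendix; it rests on Serre's computation of $H^*(BO(n);\mathbb{Z}_2)$ and $H^*(K(\mathbb{Z}_2,q);\mathbb{Z}_2)$ and on the Thom isomorphism, and the range $2n$ is precisely where the ``freeness'' survives before relations among products of Stiefel--Whitney classes enter. Everything after that — building $H$ from the generators and applying Theorem~\ref{t:II.6} — is formal. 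Accordingly, in the body of the paper I would state Theorem~\ref{t:productEM} here and defer the cohomological bookkeeping, including the definition of $d(h)$, to Appendix~\ref{s:Appendix_ThomMO}.
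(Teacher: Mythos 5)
Your overall strategy matches the paper's: construct $H$ as the product of maps $T(\gamma^n)\to K(\mathbb{Z}_2,n+h)$ classifying the cohomology classes $u$ and the $x^h_\omega$, check that $H^*$ is an isomorphism in the relevant range, and invoke Theorem~\ref{t:II.6}. You also correctly identify the cohomological bookkeeping as the crux and defer it to the Appendix, exactly as the paper does. Your reformulation of Thom's combinatorics in terms of $\widetilde{H}^*(T(\gamma^n))$ being, through degree $2n$, a free module over the Steenrod algebra on specified generators is a faithful modern restatement of the symmetric-function argument in the Appendix; the two are equivalent once one unpacks the dyadic/non-dyadic bookkeeping and the excess constraints in the range $\le 2n$.

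There is, however, a genuine gap in your final step. You write that ``Theorem~\ref{t:II.6} with $p=2$ and $k=2n$ applies,'' but Theorem~\ref{t:II.6} requires the iso/mono hypothesis on $H^i(\cdot,\mathbb{Z}_p)$ \emph{for every prime $p$}, not just $p=2$ --- otherwise the conclusion is simply false, since mod $2$ cohomology cannot detect odd-torsion homotopy. Your argument only verifies the mod $2$ condition. The paper closes this by observing that $\widetilde{H}^*(Y,\mathbb{Z}_p)$ vanishes identically for odd $p$ (each factor is a $K(\mathbb{Z}_2,q)$), while $\widetilde{H}^{n+i}(T(\gamma^n),\mathbb{Z}_p)$ vanishes for $i<n$ because $H^*(BO(n),\mathbb{Z}_p)=\mathbb{Z}_p[p_1,\dots,p_{\lfloor n/2\rfloor}]$ has its first nonzero positive-degree generator in degree $4$; hence $H^*$ is trivially an isomorphism below degree $2n$ and a monomorphism in degree $2n$ for odd $p$ as well. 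You need to add this paragraph to make the appeal to Theorem~\ref{t:II.6} legitimate.
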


\begin{corollary}\label{c:restriction}
There exists a map $g : K(\mathbb{Z}_2,n)^{2n} \rightarrow T(\gamma^n)$ such that $g^*(u)=\iota$, where $K(\mathbb{Z}_2,n)^{2n}$ denotes the $2n$-skeleton of $K(\mathbb{Z}_2,n)$ and $u, \iota$ the fundamental classes of $T(\gamma^n)$ and $K(\mathbb{Z}_2,n)$ respectively.
\end{corollary}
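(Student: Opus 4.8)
The plan is to build $g$ as a lift, through the $2n$-equivalence $H\colon T(\gamma^n)\to Y$ of Theorem \ref{t:productEM}, of the inclusion of $K(\mathbb{Z}_2,n)$ as the first factor of $Y$; the identity $g^*(u)=\iota$ will then be read off from the behaviour of $H$ in degree $n$. All cohomology below is taken with $\mathbb{Z}_2$ coefficients.

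First I would analyse degree-$n$ cohomology. Write $Y=K(\mathbb{Z}_2,n)\times Y'$, where $Y'$ is the product of the remaining Eilenberg--MacLane factors, all of dimension $\geq n+2$ and hence $(n+1)$-connected; let $p\colon Y\to K(\mathbb{Z}_2,n)$ be the first projection and $s\colon K(\mathbb{Z}_2,n)\to Y$, $x\mapsto(x,\ast)$, the corresponding section, so that $p\circ s=\mathrm{id}$. By the Künneth formula $H^n(Y)\cong H^n(K(\mathbb{Z}_2,n))\cong\mathbb{Z}_2$, generated by $p^*\iota$ (this class is nonzero because $s^*p^*\iota=\iota$). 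On the other hand $T(\gamma^n)$ is $(n-1)$-connected, and the Thom isomorphism applied to $H^0(BO(n))\cong\mathbb{Z}_2$ gives $H^n(T(\gamma^n))=\widetilde H^n(T(\gamma^n))=\mathbb{Z}_2\cdot u$. Since $H$ is a $2n$-equivalence and $n<2n$, the homomorphism $H^*\colon H^n(Y)\to H^n(T(\gamma^n))$ is an isomorphism between these two copies of $\mathbb{Z}_2$, and therefore $H^*(p^*\iota)=u$.

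Next I would produce $g$. The space $K:=K(\mathbb{Z}_2,n)^{2n}$ is a $CW$-complex of dimension exactly $2n$, so applying Corollary \ref{c:Whitehead_Homotopyclasses} to the $2n$-equivalence $H$ gives that $H_*\colon[K,T(\gamma^n)]\to[K,Y]$ is surjective. Letting $\jmath\colon K(\mathbb{Z}_2,n)^{2n}\hookrightarrow K(\mathbb{Z}_2,n)$ be the skeleton inclusion and applying surjectivity to the homotopy class of $s\circ\jmath$, I obtain a map $g\colon K(\mathbb{Z}_2,n)^{2n}\to T(\gamma^n)$ with $H\circ g\simeq s\circ\jmath$. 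Then
\[
g^*(u)=g^*\bigl(H^*(p^*\iota)\bigr)=(H\circ g)^*(p^*\iota)=(s\circ\jmath)^*(p^*\iota)=\jmath^*\bigl(s^*p^*\iota\bigr)=\jmath^*\iota,
\]
and since $n<2n$ the skeleton inclusion induces an isomorphism $\jmath^*\colon H^n(K(\mathbb{Z}_2,n))\to H^n(K(\mathbb{Z}_2,n)^{2n})$, so $\jmath^*\iota$ is the fundamental class $\iota$ (understood, as in the statement, as its restriction to the $2n$-skeleton). This gives $g^*(u)=\iota$.

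The one point requiring care is the identification $H^*(p^*\iota)=u$: it relies on both $H^n(Y)$ and $H^n(T(\gamma^n))$ being one-dimensional over $\mathbb{Z}_2$ and on $H^*$ restricting to an isomorphism there, which uses the $(n-1)$-connectivity of the Thom space together with the trivial but essential inequality $n<2n$ valid for every $n\geq1$ (so that a $2n$-equivalence is already a cohomology isomorphism in degree $n$). Everything else is a formal consequence of the Whitehead-type Corollary \ref{c:Whitehead_Homotopyclasses}, so the write-up will be brief.
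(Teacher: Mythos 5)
Your argument is correct and follows essentially the paper's route: both exploit the $2n$-equivalence $H$ to produce a map in the reverse direction on the $2n$-skeleton and then read off $g^*(u)=\iota$ from the degree-$n$ cohomology isomorphism induced by $H$. The only cosmetic difference is that you lift the first-factor inclusion $s\circ\jmath$ directly through $H$ via Corollary~\ref{c:Whitehead_Homotopyclasses}, whereas the paper (in the proof of Theorem~\ref{t:productEM} in the appendix, via Theorem~\ref{t:II.6}) first constructs a skeletal homotopy inverse of $H$ on the $2n$-skeleton of $Y$ and then restricts it to the first factor.
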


\begin{remark}\label{r:productEM}\label{r:productEM}
We remark that, in the integral setting, the Thom space of the universal \emph{oriented} $n$-plane bundle is much more complicated, since the equivalent complex is a nontrivial interated fibre bundle with different fibers of type $K(\mathbb{Z}_2,i), K(\mathbb{Z},j)$ or possibly even $K(\mathbb{Z}_p,k)$; this prevents obtaining a map like $g$ in Corollary \ref{c:restriction} simply by restricting the one obtained from the equivalent complex.
\end{remark}

\noindent
In particular, it is possible to prove the following result, which can be seen as a corollary of Theorem \ref{t:productEM} but, since it is fundamental for us in the conclusion of the proof of Theorem \ref{t:1}, we include an independent proof for completeness.

\begin{lem}\label{l:n+2_equivalenza}
Let $h : T(\gamma^n) \rightarrow K(\mathbb{Z}_2,n)$ be a map representing the Thom class $u \in H^n(T(\gamma^n), \mathbb{Z}_2)$. Then $h$ is an $n+2$-equivalence, for all positive integers $n$.
\end{lem}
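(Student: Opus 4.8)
The statement asserts that a map $h:T(\gamma^n)\to K(\mathbb{Z}_2,n)$ classifying the Thom class is an $(n+2)$-equivalence. The plan is to verify this through cohomology and an application of Theorem \ref{t:II.6}, after first reducing to the simply connected setting. Since $T(\gamma^n)$ is $(n-1)$-connected (as recalled in the excerpt, being the Thom space of the universal bundle over the $CW$-complex $BO(n)$) and $K(\mathbb{Z}_2,n)$ is also $(n-1)$-connected, for $n\geq 2$ both spaces are simply connected and Theorem \ref{t:II.6} applies directly; the case $n=1$ needs to be handled separately (for instance by a direct homotopy-group computation, using that $T(\gamma^1)\simeq \mathbb{RP}^\infty=K(\mathbb{Z}_2,1)$ and $h$ is then a homotopy equivalence, so trivially an $(n+2)$-equivalence).

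For $n\geq 2$, by Theorem \ref{t:II.6} it suffices to show that $h^*:H^i(K(\mathbb{Z}_2,n),\mathbb{Z}_2)\to H^i(T(\gamma^n),\mathbb{Z}_2)$ is an isomorphism for $i<n+2$ and a monomorphism for $i=n+2$. First I would record the cohomology of $K(\mathbb{Z}_2,n)$ in the relevant range: by Serre's computation the mod 2 cohomology ring of $K(\mathbb{Z}_2,n)$ is a polynomial algebra on admissible Steenrod operations applied to the fundamental class $\iota$, so in degrees $<2n$ it is one-dimensional in degree $n$ (spanned by $\iota$), zero in degrees $n+1$, spanned by $\mathrm{Sq}^2\iota$ in degree $n+2$ (for $n\geq 2$), etc. On the Thom space side, the Thom isomorphism $\Phi:H^j(BO(n),\mathbb{Z}_2)\xrightarrow{\cong}\widetilde H^{n+j}(T(\gamma^n),\mathbb{Z}_2)$ identifies $\widetilde H^{n+j}(T(\gamma^n))$ with $H^j(BO(n))=\mathbb{Z}_2[w_1,\dots,w_n]$ in degree $j$; in particular $\widetilde H^n(T(\gamma^n))=\mathbb{Z}_2\langle u\rangle$, $\widetilde H^{n+1}(T(\gamma^n))\cong H^1(BO(n))=\mathbb{Z}_2\langle w_1\rangle$, and $\widetilde H^{n+2}(T(\gamma^n))\cong H^2(BO(n))=\mathbb{Z}_2\langle w_1^2,w_2\rangle$. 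So naively the dimensions do not match in degrees $n+1$ and $n+2$ — but the point is the map $h^*$ only lands in the subspace generated by Steenrod operations on $u$, and the Wu formula computes these: $\mathrm{Sq}^1 u=\Phi(w_1)$ and $\mathrm{Sq}^2 u=\Phi(w_2)$ (more precisely $\mathrm{Sq}^i u=\Phi(w_i)$). Hence $h^*(\mathrm{Sq}^1\iota)=\mathrm{Sq}^1 u=\Phi(w_1)\neq 0$ and $h^*(\mathrm{Sq}^2\iota)=\mathrm{Sq}^2 u=\Phi(w_2)\neq 0$.

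The remaining point is that, since $H^*(K(\mathbb{Z}_2,n),\mathbb{Z}_2)$ is zero in degree $n+1$ and is spanned by $\mathrm{Sq}^2\iota$ in degree $n+2$ (using $n\geq 2$, so that $\iota^2$ lives in degree $2n> n+2$ and no other admissible monomials intrude), the map $h^*$ is automatically an isomorphism in degree $\leq n$ (both sides $\mathbb{Z}_2$ in degree $n$, $h^*\iota=u$ by definition of $h$; zero below), an isomorphism in degree $n+1$ (source is zero — so this is vacuous; note $h^*$ need not be surjective onto the target $\mathbb{Z}_2\langle w_1\rangle$, but Theorem \ref{t:II.6} only requires iso in degrees $<n+2$ and the source being zero forces $h^*$ to be the zero map which is trivially injective — however for the theorem we genuinely need an isomorphism, so I should double-check: since $\pi_{n+1}(T(\gamma^n),K)$ must vanish, I should instead argue via the relative cohomology or use that $H^{n+1}(K(\mathbb{Z}_2,n))=0$ together with the hypothesis of Theorem \ref{t:II.6} which asks for $h^*$ iso when $i<k$ and mono when $i=k$, taking $k=n+2$; iso when source and target might differ is the subtlety), and a monomorphism in degree $n+2$ ($\mathrm{Sq}^2\iota\mapsto\Phi(w_2)\neq 0$). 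The main obstacle is precisely reconciling the degree-$(n+1)$ and degree-$(n+2)$ discrepancy: $H^{n+1}(T(\gamma^n))=\mathbb{Z}_2\neq 0=H^{n+1}(K(\mathbb{Z}_2,n))$, so $h^*$ is not surjective there, which seems to contradict "$(n+2)$-equivalence." The resolution is that an $(n+2)$-equivalence only requires $\pi_i$-iso for $i<n+2$ and epi for $i=n+2$, which by the relative-Hurewicz/Theorem \ref{t:II.6} translates to $h^*$ being iso on $H^i$ for $i<n+2$ \emph{when restricted appropriately} — more carefully, one applies Theorem \ref{t:II.6} with $k=n+1$: one needs $h^*$ iso for $i\leq n$ (true) and mono for $i=n+1$ (true, since $H^{n+1}(K(\mathbb{Z}_2,n))=0$, the zero map is mono), giving that $h$ is an $(n+1)$-equivalence, i.e. $\pi_i$-iso for $i<n+1$ and epi for $i=n+1$; then to upgrade to $(n+2)$-equivalence one needs in addition $\pi_{n+1}$-iso and $\pi_{n+2}$-epi, which requires the mod 2 \emph{and} the integral/odd-primary cohomology comparison — here the clean route is to invoke Theorem \ref{t:productEM}: since $H:T(\gamma^n)\to Y$ is a $2n$-equivalence onto a product whose first factor is $K(\mathbb{Z}_2,n)$, and the projection $Y\to K(\mathbb{Z}_2,n)$ composed with $H$ is homotopic to $h$, one computes $\pi_i(h)$ directly from the splitting, getting that $h$ is at least an $(n+2)$-equivalence (in fact the kernel of $\pi_{n+1}(h)$ and cokernel of $\pi_{n+2}(h)$ are controlled by the other factors $K(\mathbb{Z}_2,n+2),\dots$, which only start contributing in degree $n+2$ via the second factor — hence $\pi_{n+1}(h)$ is an iso and $\pi_{n+2}(h)$ is a surjection). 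I would present the proof along these lines, using Theorem \ref{t:productEM} for the clean upgrade and the Wu formula only as a consistency check.
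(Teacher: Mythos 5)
There is a genuine factual error in the middle of your argument that sends you down an unnecessary detour. You assert that $H^{n+1}(K(\mathbb{Z}_2,n),\mathbb{Z}_2)=0$. This is false: by Serre's computation, $H^*(K(\mathbb{Z}_2,n),\mathbb{Z}_2)$ is the polynomial algebra on admissible monomials $\mathrm{Sq}^I\iota$ of excess $<n$, and $\mathrm{Sq}^1\iota$ (degree $n+1$, excess $1<n$ for $n\geq 2$) is a polynomial generator, so $H^{n+1}(K(\mathbb{Z}_2,n),\mathbb{Z}_2)=\mathbb{Z}_2\langle \mathrm{Sq}^1\iota\rangle$. Since $h^*(\mathrm{Sq}^1\iota)=\mathrm{Sq}^1 u=\Phi_2(w_1)$ generates $\widetilde H^{n+1}(T(\gamma^n),\mathbb{Z}_2)\cong H^1(BO(n),\mathbb{Z}_2)=\mathbb{Z}_2\langle w_1\rangle$, the map $h^*$ \emph{is} an isomorphism in degree $n+1$ with $\mathbb{Z}_2$ coefficients. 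The "discrepancy" you wrestle with for most of the final paragraph does not exist, and the direct cohomological route goes through without needing any upgrade.

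The second gap is that Theorem \ref{t:II.6} requires the cohomology comparison for \emph{all} primes $p$, and you only discuss $p=2$. For odd $p$ the statement is not vacuous merely because $K(\mathbb{Z}_2,n)$ has 2-torsion homology: one must also check that $\widetilde H^{n+i}(T(\gamma^n),\mathbb{Z}_p)$ vanishes in the relevant range. This holds because $\gamma^n$ is not $\mathbb{Z}_p$-orientable (equivalently, one uses that $H^*(BO(n),\mathbb{Z}_p)$ is a polynomial ring on the mod-$p$ Pontrjagin classes, which start in degree $4$), so both source and target are zero for $1\leq i\leq 2$, and $h^*$ is trivially an isomorphism there. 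This is precisely what the paper's proof verifies. Without the odd-prime check, the hypothesis of Theorem \ref{t:II.6} is not established.

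Finally, your fallback via Theorem \ref{t:productEM} (composing the $2n$-equivalence $H$ with the projection onto the first factor $K(\mathbb{Z}_2,n)$, identifying the composite with $h$ via $[T(\gamma^n),K(\mathbb{Z}_2,n)]\cong H^n(T(\gamma^n),\mathbb{Z}_2)$, and reading off that the other factors of $Y$ start contributing to homotopy only in degree $n+2$) is mathematically valid. But the paper explicitly remarks that Lemma \ref{l:n+2_equivalenza} could be derived in this way and nonetheless gives an \emph{independent} proof because it plays a self-standing role in the argument; invoking Theorem \ref{t:productEM} here sidesteps the point of the lemma. You reached for the heavier machinery only because the erroneous claim about $H^{n+1}$ made the elementary route appear blocked.
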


\begin{proof}
We can assume without loss of generality that $n\geq 2$; indeed for $n=1$ the two spaces are homotopy equivalent since $T(\gamma^1)$ is homotopy equivalent to the infinite real projective space $\mathbb{R}\mathbb{P}^{\infty}$, which is a realization of $K(\mathbb{Z}_2,1)$.

Using the Serre spectral sequences, one can show that the cohomology $H^{n+i}(K(\mathbb{Z}_2,n),\mathbb{Z}_2)$ is generated for $i\leq 2$ by the Steenrod squares $Sq^1$ and $Sq^2$ of the fundamental class, while for any prime $p>2$ the cohomology $H^*(K(\mathbb{Z}_2,n), \mathbb{Z}_p)$ has no generators.

The cohomology ring of $BO(n)$ with coefficients in $\mathbb{Z}_2$ is generated by the Stiefel-Whitney classes $w_1,\dots,w_n$ of $\gamma^n$, \textit{cfr.} \cite[Theorem 7.1]{MilnorStasheff} that is $$H^*(BO(n), \mathbb{Z}_2) = \mathbb{Z}_2\big[w_1,\dots,w_n\big],$$ while, for odd primes $p$, the cohomology $H^*(BO(n), \mathbb{Z}_p)$ is a polynomial ring generated by the (mod $p$) Pontrjagin classes $p_1, \dots, p_{{\lfloor n / 2 \rfloor}}$ of $\gamma^n$, \textit{i.e.} \begin{equation}\label{e:modpcohomology}H^*(BO(n), \mathbb{Z}_p) = \mathbb{Z}_p\big[p_1,\dots,p_{{\lfloor n / 2 \rfloor}}\big]. \end{equation}

For every prime $p$ we denote by $\Phi_p$ the Thom isomorphism between $H^i(BO(n), \mathbb{Z}_p)$ and $H^{n+i}(T(\gamma^n), \mathbb{Z}_p)$ and by $u_p$ the Thom class. Since $Sq^i(u_2)=\Phi_2(w_i)$ and since it is known that from \eqref{e:modpcohomology} one obtains that $\tilde{H}^{n+i}(T(\gamma^n), \mathbb{Z}_p)$ has no generators for $i<n$, it is possible to conclude that for any group coefficient $\mathbb{Z}_p$ the induced map in cohomology $$h^* : H^{n+i}(K(\mathbb{Z}_2,n), \mathbb{Z}_p) \rightarrow H^{n+i}(T(\gamma^n), \mathbb{Z}_p)$$ is an isomorphism for dimensions smaller or equal than $n+1$ and a monomorphism in dimension $n+2$; in fact, for $i=2$ then $H^{n+i}(T(\gamma^n),\mathbb{Z}_2)$ admits in general another generator given by $\Phi_2(w_1^2)$.

Since $K(\mathbb{Z}_2,n)$ and $T(\gamma^n)$ are simply connected, by Theorem \ref{t:II.6} we conclude that $$\pi_k\big(K(\mathbb{Z}_2,n), T(\gamma^n)\big)=0 \quad \text{for }k\leq n+2,$$ and that $h$ is an $n+2$-equivalence for all positive integers $n$, ending the proof.
\end{proof}

\begin{remark}
An obvious consequence of Lemma \ref{l:n+2_equivalenza} is that for dimensions $m\in\{1,2\}$ (and any codimension $n\geq1$) every mod 2 homology class can be represented by a smooth embedded submanifold $\Sigma \subset \mathcal{M}$.
\end{remark}

\noindent
We can now provide a proof of Theorem \ref{t:1}, following the strategy developed in \cite{ABCD}.

\begin{proof}[Proof of Theorem \ref{t:1}] 
Fix $ \varepsilon_c > 0$, whose choice will be specified later, and a mod 2 integral cycle $T$ in a homology class $\tau\in H_m (\mathcal{M}, \mathbb{Z}_2)$. We first apply Proposition \ref{p:poly_approx_prescribedsing} to find a sufficiently small $\delta_c'>0$, a suitable smooth triangulation $\mathcal{K}$ of the manifold and a new mod 2 integral cycle $P'=:S$ with the property that $S$ is in the same homology class of $T$ and the following facts hold:
\begin{itemize}
\item $\mathbb{M} (S) \leq \mathbb{M} (T) + 3\varepsilon_c$ and $\mathbb{F} (S-T) < 3 \varepsilon_c$;
\item $\|S\| (B_{\delta_c'} (\mathcal{K}^{m-2})) \leq 3\varepsilon_c$;
\item $B_{\delta_c'} (\mathcal{K}^{m-2})$ is homotopy equivalent to $\mathcal{K}^{m-2}$;
\item $S\res \mathcal{M} \setminus B_{\delta_c} (\mathcal{K}^{m-2})= \llbracket \Gamma \rrbracket\modtwo$ for a smooth submanifold $\Gamma$.
\end{itemize}
Note that if we first choose $\varepsilon_c$, then $\delta_c$, $\delta_c'$ and $\frac{\delta_c}{\delta_c'}$ can all be made smaller than any desired constant, while the triangulation is instead kept fixed (because it depends only on $\varepsilon_c$). 

We have now fixed a smooth triangulation $\mathcal{K}$ and we can therefore fix constant $C_0$ and $\bar\delta$ so that Lemmas \ref{l:spigoli} and \ref{l:spigoli-allisciati} apply. We now require that $V_{{\delta}_c/2}(\mathcal{K}^{m-2})  \subset \subset  B_{\delta_c'} (\mathcal{K}^{m-2})$ for some ${\delta}_c/2 << \delta_c'$.  Hence we apply Lemma \ref{l:spigoli-allisciati} (where $\delta'<\delta$ corresponds here to $\delta_c/2<\tilde{\delta}/2)$ to find a $U_{\tilde{\delta}/2} (\mathcal{K}^{m-2})$ suitably close to $V_{\tilde{\delta}/2} (\mathcal{K}^{m-2})$. We will want that $B_{\delta_c}(\mathcal{K}^{m-2}) \subset U_{\tilde{\delta}/2} (\mathcal{K}^{m-2}) \subset V_{\tilde{\delta}/2} (\mathcal{K}^{m-2}) \subset V_{\tilde{\delta}} (\mathcal{K}^{m-2}) \subset B_{\delta_c'} (\mathcal{K}^{m-2})$. This step requires to take $\frac{\delta_c}{\delta_c'}$ sufficiently small and $\tilde \delta < \delta'_c$. Define now $\Omega := \mathcal{M} \setminus U_{\tilde{\delta}/2} (\mathcal{K}^{m-2})$.

The current $\llbracket \Gamma \rrbracket\modtwo$ obtained from Proposition \ref{p:poly_approx_prescribedsing} is (when restricted to $\Omega$ and not relabelled) a smooth compact submanifold of $\Omega$ with $\partial \Gamma \subset \partial \Omega$, provided $\partial \Omega$ is transversal to $\Gamma$, which can be ensured via a small smooth perturbation. Denoting by $x \in H^{n}(\mathcal{M})$ the Poincaré dual of $\tau$, note that its restriction $x_{|\Omega} \in H^n(\Omega)$ to $\Omega$ is the relative Poincaré dual of a relative homology class which is represented by the smooth compact embedded submanifold $\Gamma \subset \Omega$ with boundary $\partial \Gamma = \Gamma \cap \partial \Omega$. Hence, by Theorem \ref{t:Thomboundary}, there exists a map $$F: \Omega \rightarrow T(\gamma^n)$$ such that $F^*(u) = x_{|\Omega}$; in addition, $F$ is smooth and transverse on $BO(n)$ (and such that $F_{|\partial \Omega}$ is also transverse), so that $F^{-1}(BO(n))=\Gamma$, which is the smooth part of $S$. 

We then take $\delta$ sufficiently small so that $\Omega \subset \mathcal{M}\setminus U_\delta (\mathcal{K}^{m-n-1})$ for the $U_\delta$ given in Lemma \ref{l:spigoli-allisciati}. Then, by Lemma \ref{l:co-skeleton} we have that $\mathcal{M} \setminus U_\delta(\mathcal{K}^{m-n-1})$ is homotopy equivalent to a complex of dimension $2n$. Denote $$Q:=\mathcal{M} \setminus U_\delta(\mathcal{K}^{m-n-1}).$$

\noindent
Given the $n$-dimensional cohomology class $x \in H^{n}(\mathcal{M})$ which is the Poincaré dual of $\tau$, we consider its restriction to $Q$, that is $x_{|Q} \in H^n(Q)$; note that $x_{|Q}$ can be represented by a continuous map $$c:Q \rightarrow K(\mathbb{Z},n)$$ (in a suitable homotopy class of continuous maps) pulling-back the fundamental class of $K(\mathbb{Z},n)$ to itself, \textit{i.e.} $c^*(\iota)=x_{|Q}.$ By Corollary \ref{c:restriction} and cellular approximation, there exists a map $f: Q \rightarrow T(\gamma^n)$ such that the diagram commutes, \textit{i.e.} $f$ pulls-back the universal Thom class to $x_{|Q}$.

\[
\xymatrix{
 & T(\gamma^n) \ar[d]^{h} \\
Q \ar[r]_{c \quad } \ar[ur]^{f} & K(\mathbb{Z}_2,n) \ar@/_2pc/@{.>}[u]_{g}
}
\]

By the same construction of the second part of the proof of Theorem \ref{t:Thomboundary}, we can assume without loss of generality that $f$ is smooth throughout $Q \setminus f^{-1}(U(\infty))$ and transversal to (a sufficiently high dimensional approximation of) the zero cross-section $BO(n) \subset T(\gamma^n)$, with $\partial f$ also transversal to it. Hence, $f^{-1}(BO(n))$ is a compact smooth $m$-dimensional embedded submanifold, with boundary contained in $\partial Q$; denote it as $$\mathcal{N}:=f^{-1}(BO(n)).$$ Moreover, $\mathcal{N}$ represents the relative Poincaré dual of $x_{|Q}$, which equals $j_*(\tau) \in H_m(Q,\partial Q),$ where $j_*:H_m(\mathcal{M}) \rightarrow H_m(Q,\partial Q).$ 

We next extend $\llbracket\mathcal{N}\rrbracket\modtwo$ (which is an integral current mod 2 in $\mathcal{M}$) to a mod 2 integral current $N$ with the property that $N\res \mathcal{M}\setminus \mathcal{K}^{m-n-1}$ is induced by a smooth submanifold and $N\res Q = \llbracket \mathcal{N}\rrbracket\modtwo$. First of all, because $\mathcal{N}$ is transversal to $\partial Q$, we can extend it to a smooth submanifold over the union $Q'$ of $Q$ with any smooth collaring extension of $\partial Q$. We can then use Lemma \ref{l:spigoli-allisciati} to find such an extension $Q'$ (which consists of $Q\cup \mathcal{C}$, where $\mathcal{C}$ is the smooth tubular neighborhood in Lemma \ref{l:spigoli-allisciati}) containing $\mathcal{M}\setminus V_{\delta'} (\mathcal{K}^{m-n-1})$ for some $\delta'<\delta$ positive. Since $\mathcal{N}$ intersects $\partial Q$ transversally, we can extend it to a smooth submanifold of $Q'$ with boundary in $\partial Q'$, meeting $\partial Q'$ transversally. With abuse of notation this extension is still denoted by $\mathcal{N}$. We can now use the map $\Phi$ of Lemma \ref{l:second-Phi} and set $$N:=\Phi_\sharp \llbracket \mathcal{N}\rrbracket\modtwo.$$ The latter current is mod 2 integer rectifiable because $\Phi$ is Lipschitz (and, in particular, $N$ has finite mass). Given that $\Phi$ is a diffeomorphism over $\mathcal{M}\setminus \Phi^{-1} (\mathcal{K}^{m-n-1}) \subset \mathcal{M}\setminus V_{\delta'} (\mathcal{K}^{m-n-1})$, then $N\res \mathcal{M}\setminus \mathcal{K}^{m-n-1} = \llbracket \Sigma \rrbracket\modtwo$ for some smooth submanifold $\Sigma$. Moreover $\spt (\partial N)\subset \mathcal{K}^{m-n-1}$ and in particular, by Federer flatness theorem, $\partial N = 0$, namely $N$ is a cycle. 

Consider now the two maps $F: \Omega \rightarrow T(\gamma^n)$ and $f: Q \rightarrow T(\gamma^n)$ such that $F^{-1}(BO(n))= \Gamma$ and $f^{-1}(BO(n))= \mathcal{N}\cap Q$. If we consider the restriction of $f$ to $\Omega \subset Q$, we obtain a new map $f|_{\Omega}: \Omega \rightarrow T(\gamma^n)$ that pulls-back the universal Thom class to $x_{|\Omega}$. By Lemma \ref{l:co-skeleton} we observe that $\Omega$ has the homotopy type of an $n+1$-complex, so that by Lemma \ref{l:n+2_equivalenza} and Corollary \ref{c:Whitehead_Homotopyclasses} we can conclude that $F$ and $f_{|\Omega}$ are homotopic: the homotopy can be taken smooth by \cite[Lemme IV.5]{Thom54}. In particular, we define the smooth homotopy $H:[0,1] \times \Omega$ such that $H(0,x)= f_{|\Omega}(x)$ and $H(1,x)= F(x)$. In a small collar neighborhood $\mathcal{C}$ of $\partial \Omega$ inside $\Omega$, which we identify with $\partial \Omega\times (0,1]$, we then glue the maps $f$ and $F$ together. Using the notation $x=(y,s)\in \mathcal{C}$ and after defining a smooth function $\varphi$ on $[0,1]$ which is identically equal to $0$ in a neighborhood of $0$ and identically equal to $1$ in a neighborhood of $1$, we set  
\begin{equation}\label{e:homotopy}
\widehat{f}(x):=\begin{cases}
       F(x)  & \quad if  \, \, x\in \Omega\setminus \mathcal{C},\\    
       H\left(x,\varphi (s)\right)  & \quad if\,\,  x\in \mathcal{C},   \\
        f(x)  & \quad if\,\, x\in Q\setminus \Omega  
\end{cases}\end{equation}

Since $T(\gamma^n) \setminus \{\infty\}$ is a smooth manifold, it follows from \cite[Proposition 2.3.4 (ii)]{Wall} that we can find $\widehat{f}:Q\rightarrow T(\gamma^n)$, not relabelled, which is smooth throughout $Q \setminus f^{-1}(U(\infty))$, coincides with $f(x)$ in a neighborhood of $\partial Q$ and with $F$ on $\mathcal{M}\setminus V_{\tilde{\delta}} (\mathcal{K}^{m-2})$; the approximation can be taken close enough so that $\widehat{f}$ is in the same homotopy class.
Analogously, by \cite[Proposition 4.5.10]{Wall}, we can perturb $\widehat{f}$ so that it is transverse to $BO(n)$ and coinciding with $f(x)$ in a neighborhood of $\partial Q$ and with $F$ on $\mathcal{M}\setminus V_{\tilde{\delta}} (\mathcal{K}^{m-2})$.

Consider now the submanifold $\Sigma'$ of $\mathcal{M}\setminus \mathcal{K}^{m-n-1}$ which consists of:
\begin{itemize}
    \item $\Sigma$ in $V_{\delta'} (\mathcal{K}^{m-n-1})\setminus \mathcal{K}^{m-n-1}$;
    \item $\mathcal{N}$ on $U_\delta (\mathcal{K}^{m-n-1})\setminus V_{\delta'} (\mathcal{K}^{m-n-1})$;
    \item $\widehat{f}^{-1} (BO(n))$ on $Q$.
\end{itemize}
This is a smooth submanifold because:
\begin{itemize}
    \item $f$ and $\widehat{f}$ coincide in a neighborhood of $\partial Q$ and hence $\widehat{f}^{-1} (BO(n))$ coincides with $\mathcal{N}$ in a neighborhood of $\partial Q$;
    \item $\Sigma = \Phi (\mathcal{N}) = \mathcal{N}$ in a neighborhood of $\partial V_{\delta'} (\mathcal{K}^{m-n-1})$.
\end{itemize}
    Moreover, $R= \llbracket \Sigma'\rrbracket\modtwo \in \mathbf{R}_m(\mathcal{M},\mathbb{Z}_2)$ and $\spt (\partial R)\subset \mathcal{K}^{m-n-1}$; in particular it is a cycle by Federer's flatness theorem. Observe also that $R-S$ is supported, by construction, in $V_{\tilde{\delta}} (\mathcal{K}^{m-2})$, which is homotopy equivalent to $\mathcal{K}^{m-2}$, and thus has trivial $m$-homology. In particular, $R$ and $S$ belong to the same homology class. 

The conclusion now follows \emph{verbatim} as in \cite{ABCD}, but we recall the last passages here for completeness: we apply Proposition \ref{p:squash} to $S$ and $R$, noticing that the $\varepsilon_d$ in Proposition \ref{p:squash} is a parameter to be chosen in terms of the $\varepsilon$ of the statement of Theorem \ref{t:1}, and the $\eta_d$ in Proposition \ref{p:squash} is $\delta'_c$ here. This gives us a parameter $\delta_d$, which depends on $\varepsilon_d$ and $\delta'_c$. In turn we impose that $\tilde{\delta} \leq \delta_d$ so that we can apply Proposition \ref{p:squash}. Since $\varepsilon_d$ will be specified only in terms of $\mathbb{M}(T)$ and of $\varepsilon$ in the statement of Theorem \ref{t:1}, while $\delta'_c$ depends on $\varepsilon_c$, which will also be specified only in terms of $\mathbb{M}(T)$ and $\varepsilon$ in the statement of Theorem \ref{t:1}, the parameter $\tilde{\delta}$ can be taken smaller than $\delta_d$. We can then find a current $R':=\Phi_\sharp R$ for a smooth diffeomorphism $\Phi$ isotopic to the identity such that  
\begin{align}
\mathbb{M} (R') \leq (1+\varepsilon_d)\, \mathbb M (S) \leq (1+\varepsilon_d) (\mathbb{M} (T) + 3\varepsilon_c)\,\nonumber .
\end{align}
We therefore conclude that $R'$ is homologous to $R$, hence to $S$, and therefore to $T$. Moreover, if we choose
\begin{align}
&\varepsilon_d \,(3+\mathbb{M} (T)) < \frac{\varepsilon}{2} \qquad \mbox{and} \qquad
\,3\varepsilon_c < \frac{\varepsilon}{2}\,\nonumber ,
\end{align}
then $\mathbb{M} (R') \leq \mathbb{M} (T) + \varepsilon$. Finally
\begin{align}
\mathbb{F} (T-R') &\leq 3\varepsilon_c + \mathbb{F} (S-R') 
\leq 3\varepsilon_c + C (\varepsilon_d
\,\mathbb{M} (S) + 2\|S\| (B_{\delta'_c}(\mathcal{K}^{m-2})))^{\frac{m+1}{m}}\nonumber\\
&\leq 3\varepsilon_c + C (\varepsilon_d (\mathbb{M} (T) +\varepsilon_c) + 6\varepsilon_c)^{\frac{m+1}{m}}\,\nonumber.
\end{align}
Therefore it is clear that a suitable choice of $\varepsilon_d$ and $\varepsilon_c$ depending only on $\mathbb{M} (T)$ and $\varepsilon$ suffices to show $\mathbb{F} (T-R') \leq \varepsilon$. 

The proof of part $(3)$ of Theorem \ref{t:1} is analogous; by assumption we know that $\tau$ is represented by a smooth submanifold $\Sigma$ and hence, by Theorem \ref{t:Thomclosed} there exists a map $\ell:\mathcal{M} \rightarrow T(\gamma^n)$ which pulls-back the universal Thom class $u \in H^n(T(\gamma^n), \mathbb{Z}_2)$ to the Poincaré dual of $\tau$. Substituting in the previous steps the map $f$ with this new map $\ell$, defined over the whole ambient space $\mathcal{M}$, and defining a similar homotopy as that one in \eqref{e:homotopy}, the result follows by applying Proposition \ref{p:squash} to $S$ and $\llbracket \Sigma \rrbracket\modtwo$, where $S$ is the integral cycle mod 2 denoted $P'$ in Proposition \ref{p:poly_approx_prescribedsing}.
\end{proof}

\section{Optimality of the construction}\label{s:optimality}

In this section we show that the construction of Theorem \ref{t:1} is optimal, in the sense that there exists $m$-dimensional $\mathbb{Z}_2$ homology classes that cannot be represented by mod 2 cycles which are smooth embedded manifolds in the complement of an $m-n-2$-dimensional skeleton of any smooth triangulation of $\mathcal{M}$. In fact, proving sharpness of the construction becomes even subtler in mod 2 homology than in the integral setting since singularities that appear in this context all arise from the impossibility of finding embeddings in low codimensions, and not $-$ as in integral classes $-$ by innate singularities obstructing also Steenrod representability. For this reason, we cannot exploit Sullivan's geometric theory of resolution of singularities by blow-up in \cite{SullivanLiverpool}, as done in \cite[Theorem 6.3]{ABCD} to derive a contradiction. Instead, we need to rely on singularity theory of stable mappings, coupled with an elegant result due to Grant and Sz\H{u}cs \cite{Grantszucs} that characterizes their singular set; we refer to \cite{GGbook} for an accessible overview on the theory of stable mappings. \vspace{0.25cm}

The mod 2 homology class of least dimension that cannot be represented by a smooth embedded submanifold is a 4-dimensional homology class in a 6-dimensional closed smooth manifold $\mathcal{M}$, and it comes as a corollary of Teichner's construction of $S^2$-bundles over $m$-dimensional base manifolds (for every $m\geq4$) such that every 2-dimensional $\mathbb{Z}_2$ cohomology class restricting to the generator in the fibre cannot be written as the second Stiefel-Whitney class $w_2(E)$ of some real vector bundle $E$ over the base manifold (recall that $w_2(E)^2=p_1(E)$ (mod 2), where $p_1$ is the first Pontrjagin class), see \cite[Section 3]{Teichner95}.

In fact, let $h$ be the canonical map from $T(\gamma^2)$ to $K(\mathbb{Z}_2,2)$; by the theory of Thom \cite{Thom54}, there is an obstruction in extending the homotopy inverse $g$ of $h$ from the $4$-skeleton to the 5-skeleton given by the Eilenberg-MacLane invariant associated to the second non-null homotopy group $\pi_4(T(\gamma^2))$, which is an element in $H^5(K(\mathbb{Z}_2,2))$ with coefficients in $\pi_4(T(\gamma^2))$ generating the kernel of the homomorphism $$h^*:H^5(K(\mathbb{Z}_2,2), \mathbb{Z}) \rightarrow H^5(T(\gamma^2)), \mathbb{Z}).$$ Precisely, the kernel of $h^*$ is generated by $(1/2)$$\beta$$\textbf{p}(\iota)$, where $\textbf{p}$ is the Pontrjagin square $\textbf{p}: H^*(\mathcal{M}, \mathbb{Z}_2) \rightarrow H^{2*}(\mathcal{M}, \mathbb{Z}_4)$, \emph{cfr.} \cite{Pontrjaginsq}, and $\beta$ is the mod 2 Bockstein $\beta : H^*(\mathcal{M}, \mathbb{Z}_2) \rightarrow H^{*+1}(\mathcal{M}, \mathbb{Z})$ associated to the exact sequence $$0 \rightarrow \mathbb{Z}  \rightarrow \mathbb{Z}  \rightarrow \mathbb{Z}_2  \rightarrow 0.$$

Hence, if a mod 2 homology class $\tau \in H_m(\mathcal{M}, \mathbb{Z}_2)$ admits a smooth embedded representative, then $\beta (x^2)=0$, where $x \in H^n(\mathcal{M}, \mathbb{Z}_2)$ is the Poincaré dual of $\tau$. In particular, Teichner's construction provides an orientable 6-dimensional closed manifold $\mathcal{M}$ with $\bar{x} \in H^2(\mathcal{M},\mathbb{Z}_2)$ such that $\beta (\bar{x}^2)\neq0,$ see \cite[Lemma 2]{Teichner95}. It is a finer and very elegant result due to Grant and Sz\H{u}cs in \cite{Grantszucs} that the cohomology operation $\beta (\bar{x}^2)\neq0$ also obstructs realizability by immersions and, more precisely, they characterize the obstruction $\beta (\bar{x}^2) \in H^{5}(\mathcal{M}, \mathbb{Z})$ as the integral class whose Poincaré dual is realized by the singular set of any \emph{stable} map realizing the Poincaré dual of $\bar{x}$.

In particular, recall that a smooth map $f: \Sigma \rightarrow \mathcal{M}$ of a smooth closed $m$-dimensional manifold $\Sigma$ into a smooth $m+n$-dimensional manifold $\mathcal{M}$ is called \emph{stable} if for any sufficiently close smooth map $f^{\prime}: \Sigma \rightarrow \mathcal{M}$ there exist diffeomorphisms $g: \Sigma \rightarrow \Sigma$ and $h: \mathcal{M} \rightarrow \mathcal{M}$ such that $h^{-1} \circ f^{\prime} \circ g=f$. In general, it is a remarkable result due to Thom and Levine \cite{Thomlevine} that stable maps are not always dense\footnote{In the usual $C^{\infty}$ Whitney topology on $C^{\infty}(\Sigma, \mathcal{M})$.} in the space of smooth mappings $C^{\infty}(\Sigma, \mathcal{M})$ but, in codimension $n=2$, stable maps are dense in $C^{\infty}(\Sigma, \mathcal{M})$ whenever $m<21$. Hence, (the Poincaré dual of) Teichner's cohomology class $x \in H^2(\mathcal{M}, \mathbb{Z}_2)$ can be represented by a stable map $\bar{f}: \Sigma \rightarrow \mathcal{M}$, simply by approximating the continuous map given by Steenrod representability by a smooth map, and then by a stable map. The \emph{singular set} of $\bar{f}:\Sigma \rightarrow \mathcal{M}$ is defined as $$\mathcal{S}(\,f):=\{x \in M \mid \operatorname{rank}\, d f(x) <m\} \subseteq M,$$ where $d \bar{f}(x)$ is the differential of $\bar{f}$. The singular set $\mathcal{S}(\,\bar{f})$ is the closure in $M$ of the top singularity stratum $\mathcal{S}^{1,0} \subseteq M$ of \emph{cross-cap} points, singularities that persist under small perturbations, and it is an open dense subset of $\mathcal{S}(\,\bar{f})$. In addition, since $x \in H^2(\mathcal{M},\mathbb{Z}_2)$, then $\mathcal{S}(\,f)$ has codimension $3$ in $\Sigma$, and hence it is of dimension 1; in particular, there is no other stratum of singularities, since all other strata have codimension at least $6$ in $\Sigma$, \emph{cfr.} \cite{Boardman} or \cite[Chapter VI]{GGbook}; moreover, $\mathcal{S}:=\mathcal{S}(\,\bar{f})$ carries a fundamental class $[\mathcal{S}] \in H_{1}(\mathcal{S}, \mathbb{Z}_{\mathcal{S}^{1,0}})$ with twisted coefficients $\mathbb{Z}_{\mathcal{S}^{1,0}}$ according to the tangent bundle of $\mathcal{S}^{1,0}$.

In \cite{Grantszucs}, Grant and Sz\H{u}cs showed that $\beta (\bar{x}^2) \in H^{5}(\mathcal{M}, \mathbb{Z})$ can be interpreted in terms of the singular set $\mathcal{S}$ of any stable map realizing $\bar{x}$; more formally, denoting by $i: \mathcal{S} \rightarrow M$ the inclusion, and by $\tilde{f}=\bar{f} \circ i: \mathcal{S} \rightarrow \mathcal{M}$ the restriction of $\bar{f}$ to its singular set, they proved the following.

\begin{pro}[\protect{\cite[Proposition 4.2]{Grantszucs}}]\label{p:Grantszucs}
Let $x \in H^n(\mathcal{M},\mathbb{Z}_2)$ be realized by a stable map $\bar{f}: \Sigma^{m} \rightarrow \mathcal{M}^{m+n}$ of closed smooth manifolds, where $n$ is even. Then $\beta(x^2) \in H^{2n+1}(\mathcal{M}, \mathbb{Z})$ is the cohomology class in $\mathcal{M}$ realized by the singular set $\mathcal{S}$ of $\bar{f}$. In other words, $\beta(x^2)$ is the Poincaré dual of $$\tilde{f}_*[\mathcal{S}] \in H_{m-n-1}(\mathcal{M}, \mathbb{Z}_{\mathcal{M}}).$$
\end{pro}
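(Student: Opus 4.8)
The plan is to reduce the identity to a computation of universal characteristic classes and then invoke naturality. Both sides of the asserted equality are pullbacks of universal classes: $\beta(x^2)$ is obtained from $x$ by applying a stable cohomology operation, while $\mathrm{PD}(\tilde f_*[\mathcal S])$ is the image, under the classifying map of $\bar f$, of the universal class carried by the top singular stratum in the classifying space $X_1$ for cobordism of codimension-$n$ maps with at most $\Sigma^{1,0}$ (cross-cap) singularities, in the sense of Rimányi--Sz\H ucs. That space fits into a cofibration $T(\gamma^n)\hookrightarrow X_1\to T(\zeta)$, where $\zeta$ is the universal rank-$(n+1)$ normal bundle of the $\Sigma^{1,0}$ stratum over (a classifying space for) $O(1)\times O(n)$, the $O(1)$-factor recording the orientation line $\xi$ of the kernel of $d\bar f$ along $\mathcal S^{1,0}$ --- whence the twisted coefficients $\mathbb Z_{\mathcal S^{1,0}}$. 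The hypothesis that $\bar f$ realizes $x$ means precisely that $\bar f$ is classified by a map $\phi\colon\mathcal M\to X_1$ with $\phi^*u=x$ for the Thom class $u\in H^n(X_1;\mathbb Z_2)$, and with $\phi^{-1}$ of the ``singular part'' of $X_1$ equal to $\tilde f(\mathcal S)$; equivalently, $x=\bar f_!(1)$ for the Gysin map of $\bar f$.

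\textbf{Matching the $\mathbb Z_2$-reductions.}
First I would check that the two sides agree modulo $2$. Since $\rho_2\circ\beta=Sq^1$ and $x$ has degree $n$, the Adem relation gives $\rho_2\beta(x^2)=Sq^1Sq^n(x)=(n-1)\,Sq^{n+1}(x)$, and the hypothesis that $n$ is even makes $n-1$ odd, so $\rho_2\beta(x^2)=Sq^{n+1}(x)$. On the other hand, writing $\nu=\bar f^*T\mathcal M-T\Sigma$ for the virtual normal bundle of $\bar f$, Ronga's formula for the first Thom--Boardman locus identifies the $\mathbb Z_2$-class in $\Sigma$ dual to $\overline{\mathcal S^{1,0}}$ with $w_{n+1}(\nu)$; combining this with the Wu-type formula $Sq^k\circ\bar f_!=\bar f_!\circ\bigl(\sum_i w_i(\nu)\,Sq^{k-i}\bigr)$ for the Gysin map and using $x=\bar f_!(1)$ yields $Sq^{n+1}(x)=\bar f_!\bigl(w_{n+1}(\nu)\bigr)=\rho_2\,\mathrm{PD}(\tilde f_*[\mathcal S])$. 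Hence the mod-$2$ reductions of $\beta(x^2)$ and $\mathrm{PD}(\tilde f_*[\mathcal S])$ coincide.

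\textbf{The integral refinement.}
It then remains to upgrade agreement of the $\mathbb Z_2$-reductions to equality of the $2$-torsion classes themselves, which I would do at the universal level. By the $\mathbb Z_2$-Thom isomorphism $\Phi$ one has $u^2=\Phi(w_n)$, and applying the integral Bockstein together with the twisted Thom isomorphism $\Phi_{\mathbb Z}$ (with twist $w_1(\gamma^n)$) gives $\beta(u^2)=\Phi_{\mathbb Z}(\widetilde\beta\, w_n)=\Phi_{\mathbb Z}(W_{n+1})$, where $W_{n+1}$ is the integral Stiefel--Whitney class of $\gamma^n$. Separately, reading the cofibration $T(\gamma^n)\hookrightarrow X_1\to T(\zeta)$ identifies the universal class carried by the singular stratum with the image of the twisted Euler-type class of $\zeta$, which, after pushing into $X_1$ and applying the Thom isomorphism, is again $\Phi_{\mathbb Z}(W_{n+1})$. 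Pulling back by $\phi$ gives $\beta(x^2)=\mathrm{PD}(\tilde f_*[\mathcal S])$; in particular this exhibits $\tilde f_*[\mathcal S]$ as $2$-torsion, consistently with the left-hand side.

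\textbf{Main obstacle.}
The routine parts are the Adem-relation step and the Wu formula for Gysin maps. The delicate part --- where I expect essentially all the work to be --- is the integral/twisted comparison at the universal level: one must track carefully that the orientation line $\xi$ along $\mathcal S^{1,0}$, the orientation twist $\mathbb Z_{\mathcal M}$, and the twist $w_1(\gamma^n)$ in the Thom isomorphism are mutually compatible, so that $\widetilde\beta\, w_n$ genuinely corresponds to the twisted fundamental class of the singular stratum, and one must make rigorous the ``reading off'' of the universal $\Sigma^{1,0}$-class from the cofibration, i.e. prove the integral (and relative/twisted) refinement of Ronga's formula needed for this identification. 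Everything hard is concentrated there.
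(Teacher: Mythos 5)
The paper does not actually prove this proposition: it is quoted verbatim (with the citation in the proposition header) from Grant and Sz\H{u}cs, so there is no internal argument in the paper to compare against. What you should be evaluated against is the argument in \cite{Grantszucs} itself, and your sketch is in fact a fair reconstruction of it. Grant and Sz\H{u}cs do work with the Rim\'anyi--Sz\H{u}cs/Kazarian classifying space $X_1$ for prim/$\Sigma^{1,0}$-maps, they do identify the universal class of the singular stratum via the cofibration $T(\gamma^n)\hookrightarrow X_1\to T(\zeta)$, and the integral identification hinges on $\beta(u^2)$ corresponding to the twisted Thom image of the integral Stiefel--Whitney class $W_{n+1}$, just as you write. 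Your mod-$2$ sanity check is correct and worth keeping: $\rho_2\beta(x^2)=Sq^1Sq^n x=(n-1)Sq^{n+1}x=Sq^{n+1}x$ for $n$ even by the Adem relation, and the Wu formula for the Gysin map together with the Thom--Porteous class $w_{n+1}(\nu)$ of $\overline{\Sigma^{1,0}}$ gives $Sq^{n+1}(\bar f_!1)=\bar f_!(w_{n+1}(\nu))$, matching the mod-$2$ reduction of $\mathrm{PD}(\tilde f_*[\mathcal S])$.

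Two small cautions. First, the class $w_{n+1}(\nu)$ of the first degeneracy locus is usually attributed to Thom/Porteous rather than Ronga (Ronga computed higher Thom--Boardman polynomials such as $\Sigma^{1,1}$), though this is only a naming matter. Second, and more substantively, your ``integral refinement'' paragraph is really only a plan: the compatibility of the three twists (the kernel line $\xi$ over $\mathcal S^{1,0}$, the orientation system $\mathbb Z_{\mathcal M}$, and $w_1(\gamma^n)$) and the integral/twisted version of the Porteous identification are precisely the content of Grant--Sz\H{u}cs's Propositions 4.1--4.2, and you correctly flag this as where the work sits. As a blind reconstruction of a cited result the proposal is on the right track and captures the essential mechanism; as a self-contained proof it would still need the twisted-coefficient bookkeeping spelled out.
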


\noindent
Hence, we obtain the following.

\begin{thm}\label{t:sharp}
Let $z \in H_4(\mathcal{M},\mathbb{Z}_2)$ be the codimension 2 homology class of Teichner's example in \cite[Lemma 2]{Teichner95} and fix a smooth triangulation $\mathcal{K}$ of $\mathcal{M}$. Then it is impossible to find a representative $\Sigma$ for $z$ which is a smooth embedded submanifold in the complement of the $0$-dimensional skeleton of $\mathcal{K}$.
\end{thm}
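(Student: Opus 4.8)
The plan is to argue by contradiction, combining the obstruction-theoretic characterization sketched above with Proposition~\ref{p:Grantszucs}. Suppose $z\in H_4(\mathcal{M},\mathbb{Z}_2)$ admitted a representative $\Sigma$ which is a smooth embedded submanifold of $\mathcal{M}\setminus\mathcal{K}^0$, whose closure is contained in $\mathcal{K}^0$; since $\mathcal{K}^0$ is a finite set of points and $\Sigma$ is $4$-dimensional, the singular locus where $\Sigma$ fails to be a genuine closed embedded submanifold is $0$-dimensional. The first step is to repair this: by a standard surgery/resolution near the finitely many singular points, I would produce a smooth closed manifold $\Sigma'$ together with a smooth map $\bar f:\Sigma'\to\mathcal{M}$ which agrees with the embedding away from small balls around the points of $\mathcal{K}^0$, still represents $z$ (homologically, mod~2, the local contributions of the small balls are negligible), and such that $\bar f$ is an embedding outside a compact set of dimension $0$. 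In particular $\bar f$ can be perturbed — staying in codimension $n=2$ with $m=4<21$, so stable maps are dense by the Thom--Levine density theorem recalled above — to a \emph{stable} map $\bar f:\Sigma'\to\mathcal{M}$ realizing the Poincaré dual $\bar x\in H^2(\mathcal{M},\mathbb{Z}_2)$ of $z$, whose singular set $\mathcal{S}(\bar f)$ is contained in (a neighborhood of) $\mathcal{K}^0$, hence is contained in a $0$-dimensional set.

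The second step is purely dimensional. By the analysis of singularity strata in codimension $2$ for $m=4$ recalled above (Boardman, \cite[Chapter VI]{GGbook}), the singular set $\mathcal{S}(\bar f)$ of a stable map has pure codimension $3$ in $\Sigma'$, i.e.\ it is $1$-dimensional, consisting of the closure of the stratum $\mathcal{S}^{1,0}$ of cross-cap points; all deeper strata have codimension at least $6$ and hence are empty. On the other hand, Proposition~\ref{p:Grantszucs} (applied with $n=2$, which is even) identifies $\tilde f_*[\mathcal{S}]\in H_{m-n-1}(\mathcal{M},\mathbb{Z}_{\mathcal{M}})=H_{1}(\mathcal{M},\mathbb{Z}_{\mathcal{M}})$ as the Poincaré dual of $\beta(\bar x^2)\in H^5(\mathcal{M},\mathbb{Z})$. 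By the defining property of Teichner's class, \cite[Lemma 2]{Teichner95}, we have $\beta(\bar x^2)\neq 0$, so $\tilde f_*[\mathcal{S}]\neq 0$; in particular $\mathcal{S}=\mathcal{S}(\bar f)$ must be a nonempty $1$-dimensional subcomplex carrying a nonzero fundamental class. This contradicts the fact that, by construction, $\mathcal{S}(\bar f)$ lies in a $0$-dimensional set and hence cannot carry a nontrivial $1$-dimensional homology class.

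The main obstacle I expect is the \textbf{first step}: making precise that one may replace the singular embedded representative $\Sigma\subset\mathcal{M}\setminus\mathcal{K}^0$ by a genuine stable map $\bar f$ of a \emph{closed} manifold whose singular set is still confined near $\mathcal{K}^0$. One has to ensure that (a) the resolution near the $0$-skeleton does not change the represented mod~2 homology class, (b) the resulting map is genuinely a map of closed manifolds realizing $\bar x$ in the sense required by Proposition~\ref{p:Grantszucs} (so that the Steenrod/Pontryagin--Thom machinery applies), and (c) the perturbation to a stable map, which a priori is $C^\infty$-small but global, can be arranged so that it does not create new singularities far from $\mathcal{K}^0$ — this is where one uses that $\Sigma$ is already a genuine embedding, hence a stable map, on the complement of a neighborhood of $\mathcal{K}^0$, so the perturbation to stability can be localized near $\mathcal{K}^0$. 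Once these technicalities are in place, the dimensional clash in the second step closes the argument immediately. Finally, I would remark that the same reasoning with $\mathcal{K}^{m-n-2}=\mathcal{K}^0$ replaced by any $\mathcal{K}^{j}$ with $j\le m-n-2$ yields the sharpness of the codimension $n+1$ estimate in Theorem~\ref{t:1} in this example, since then $\dim\mathcal{S}(\bar f)\le j\le m-n-2<m-n-1$ would still contradict the nonvanishing of $\beta(\bar x^2)$.
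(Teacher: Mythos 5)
Your proposal is correct and follows essentially the same route as the paper: realize the Poincaré dual $\bar x$ by a stable map, invoke Grant--Sz\H{u}cs to identify $\beta(\bar x^2)$ with the class carried by the $1$-dimensional singular locus, and observe that this nonvanishing class cannot live in a (contractible) neighborhood of the $0$-skeleton. The paper is terser: it passes directly through a Steenrod representative and does not spell out the ``repair'' step you describe, whereas you make explicit how to pass from a hypothetical embedded representative in $\mathcal{M}\setminus\mathcal{K}^0$ to a stable map of a \emph{closed} manifold with singularities confined near $\mathcal{K}^0$. The concerns you flag in that step are all addressable: (a) the local modification inside small balls is homologically invisible since $H_4$ of a disjoint union of $6$-balls vanishes; (b) the closed $3$-manifolds $\Sigma\cap\partial B_p$ bound abstractly because $\Omega_3^O=0$, and the resulting caps map into the contractible balls; (c) embeddings of compact pieces are open in the Whitney topology, so a sufficiently small perturbation to a stable map (density by Thom--Levine in codimension~2, $m<21$) remains an embedding away from the balls, forcing $\tilde f(\mathcal{S})$ into $\bigcup B_p$, whence $\tilde f_*[\mathcal{S}]=0$ in $H_1(\mathcal{M},\mathbb{Z}_\mathcal{M})$, contradicting $\beta(\bar x^2)\ne 0$. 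One small imprecision: the singular set is confined to a contractible \emph{neighborhood} of $\mathcal{K}^0$, not literally to a $0$-dimensional set, but the homological conclusion you draw is correct; the dimensional clash is between a nontrivial degree-$1$ class and a collection of contractible balls.
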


\begin{proof}
Recall that, by Steenrod representability, $z$ can be represented by a continuous map $f:\Sigma \rightarrow \mathcal{M}$. We can now approximate $f$ with a homotopic smooth map and, again, by a homotopic stable map, given that for $n=2$ and $m=4$ stable maps are dense (in the $C^{\infty}$ Whitney topology) in the space of smooth maps $C^{\infty}(\Sigma, \mathcal{M})$. Hence, $\bar{x}$ is realized by a stable map $\bar{f}: \Sigma^{m} \rightarrow \mathcal{M}^{m+n}$ of closed smooth manifolds. In addition, by Teichner's construction in \cite[Lemma 1]{Teichner95} we know that $\beta (\bar{x}^2) \neq 0$, where $\bar{x}$ denotes the Poincaré dual of $z$.

Therefore, by \cite[Proposition 4.2]{Grantszucs} we have that $\beta (\bar{x}^2)$ is realized by the singular set $\mathcal{S}$ of $\bar{f}$, which has dimension 1; it is therefore impossible to remove $\tilde{f}(\mathcal{S})$ by removing the $0$-dimensional skeleton of $\mathcal{K}$, which is a finite collection of points.
\end{proof}

\begin{remark}
Note that the proof of Theorem \ref{t:sharp} also shows that it is impossible to find a representative $\Sigma$ for $z$ which is a smooth \emph{immersed} submanifold in the complement of the $0$-dimensional skeleton $\mathcal{K}^0$ of $\mathcal{K}$.
\end{remark}

\newpage
\addtocontents{toc}{\protect\setcounter{tocdepth}{1}}

\appendix

\section{Homotopy type of $T(\gamma^n)$}\label{s:Appendix_ThomMO}

In this appendix we recall Thom's algebraic computations in the study of the homotopy type of $T(\gamma^n)$, which in turn are based on fundamental contributions of Serre, \emph{cfr.} \cite{Serre53}. In particular we highlight the main steps in the proof of Theorem \ref{t:productEM}, \emph{cfr.} \cite[Chapitre II, Section 6]{Thom54}.\vspace{0.25cm}

Recall that the cohomology $H^*(T(\gamma^n),\mathbb{Z}_2)$ is isomorphic to the ideal $J$ of the algebra $H^*(BO(n), \mathbb{Z}_2)$ generated by the Stiefel-Whiteny classes $w_n$. On the other hand, introducing $n$ variables $t_i$ we obtain a basis of $H^h(BO(n), \mathbb{Z}_2)$ generated by symmetrised monomials \begin{equation}\label{e:monomials} \sum(t_1)^{a_1}(t_2)^{a_2} \ldots (t_r)^{a_r},\end{equation} where the sum of the integers $a_i$ is equal to $h$, and the symmetrisation sign $\sum$ means the summation over all \emph{essential permuations} for \eqref{e:monomials}\footnote{\emph{i.e.} over representatives of classes of the symmetric group of $n$ variables modulo the subgroup of permutations fixing the monomial \eqref{e:monomials}.}. Hence, one obtains a basis for the dimension $n+h$ of the ideal $J$ by multiplying the elements of the basis in \eqref{e:monomials} by $w_n=t_1 \cdot t_2 \cdot \ldots t_n$, obtaining all symmetrised monomials \begin{equation}\label{e:monomialsdue}\sum(t_1)^{a_1+1}(t_2)^{a_2+1} \ldots(t_r)^{a_r+1} t_{r+1} \ldots t_n.\end{equation} Indeed, any essential permutation for the monomial \eqref{e:monomials} is essential for \eqref{e:monomialsdue}, and vice versa.

Let $P$ be an arbitrary polynomial in the variables $t_j$. A variable $t_j$ is a called \emph{dyadic} for the polynomial $P$ if the exponent of this variable in terms of the polynomial $P$ is either zero or $2^i$ (with $i=0$ included). Any dyadic variable $t_j$ for the polynomial $P$ is dyadic for $Sq^i P$ as well, where $Sq^i$ denotes the Steenrod squares. By a \emph{non-dyadic factor} of the monomial $(t_1)^{a_1}(t_2)^{a_2} \ldots(t_r)^{a_r}$ we mean the submonomial consisting of all non-dyadic variables; denote the number of these variables by $u$, and by $v$ their total degree. We define a preorder\footnote{A binary relation satisfying reflexivity and transitivity only.} $\succsim$ on the set of monomials in $(t_j)$ as follows: given two monomials $x,y$ we say that $x\succsim y$ if $u(x)>u(y)$ or if $u(x)=u(y)$ and $v(x)<v(y)$. For any number $h \leq n$ consider the classes \begin{equation}\label{e:partizionidiadiche}x_\omega^h=\sum(t_1)^{a_1+1}(t_2)^{a_2+1} \ldots(t_r)^{a_r+1}\cdot t_{r+1} \ldots t_n,\end{equation} where $\omega=\{a_1, a_2, \ldots, a_r\}$ is an arbitrary decomposition of $h$ into summands, with no summand of type $2^i-1$ (non-dyadic decomposition of $h$); we denote the number of such decompositions by $d(h)$.

For any dimension $i \leq h$, consider the classes \begin{equation}\label{e:linindep}x_{\omega_i}^i,\,\, Sq^1 x_{\omega_{i-1}}^{i-1},\,\, S q^2 x_{\omega_{i-2}}^{i-2},\,\, \ldots,\,\, S q^{I_h} x_{\omega_h}^h,\,\, \ldots,\,\, S q^I w_k,\end{equation} where $S q^{I_h}$ is an admissible sequence of total degree $i-h$, and $\omega_h$ is a non-dyadic decomposition of $h$ as in \eqref{e:partizionidiadiche}; all classes in \eqref{e:linindep} are in fact linearly independent. In particular, it is possible to prove that the cohomology classes in \eqref{e:linindep} form a basis of $H^{n+i}(T(\gamma^n))$. We associate to each class $x^h_\omega$ a map $$H_\omega: T(\gamma^n) \rightarrow K(\mathbb{Z}_2,n+h)$$ such that $H^*_{\omega}(\iota)=x^h_\omega$, where $\iota$ is the fundamental class of $K(\mathbb{Z}_2,n+h)$. The maps $H_\omega$ define a map $H$ from $T(\gamma^n)$ into the following product of Eilenberg-MacLane spaces $$Y:=K(\mathbb{Z}_2,n) \times K(\mathbb{Z}_2,n+2) \times \dots \times \left(K(\mathbb{Z}_2,n+h)\right)^{d(h)} \times \dots \times \left(K(\mathbb{Z}_2,2n)\right)^{d(n)}.$$

Since the classes \eqref{e:linindep} form a basis of $H^{n+i}(T(\gamma^n))$, then the homomorphism $H^*$ induced by $H$ is an isomorphism from $H^{n+i}(Y , \mathbb{Z}_2)$ to $H^{n+i}(T(\gamma^n))$ for all $i \leq n$. In mod $p$ coefficients, with $p>2$, the cohomology of $Y$ is trivial and the cohomology of $T(\gamma^n)$ is trivial in dimensions strictly less than $2n$. Hence, $H^*$ is again an isomorphism in dimensions strictly less than $2 n$ and a monomorphism in dimension $2 n$. Thus, for $T(\gamma^n)$ and $Y$ one can apply Theorem \ref{t:II.6}, obtaining that there exists an inverse map $g$ from the $2n$-skeleton of $Y$ to $T(\gamma^n)$ such that $g \circ H$ is homotopic to the identity on the $2n-1$-skeleton of $T(\gamma^n)$. This proves Theorem \ref{t:productEM}.

\newpage

\end{document}